\numberwithin{equation}{section}
\newtheorem{theorem}{Theorem}[section]
\newtheorem{proposition}[theorem]{Proposition}
\newtheorem{lemma}[theorem]{Lemma}
\theoremstyle{definition}
\newtheorem{remark}[theorem]{Remark}
\def\R{{\mathfrak R}}
\def\begeq{\begin{equation}}
\def\endeq{\end{equation}}
\def\lf{\left}
\def\ri{\right}
\def\R{\Bbb R}
\begin{document}
\title[]{Clustering of Boundary Interfaces  for an inhomogeneous Allen-Cahn equation on a smooth bounded domain}
\author{Lipeng Duan}
\address{Lipeng Duan,
\newline\indent School of Mathematics and Statistics, Central China Normal University,
\newline\indent Wuhan 430079, P. R. China.
}
\email{lpduan777@sina.com}

\author{Suting Wei}
\address{Suting Wei,
\newline\indent Department of Mathematics, South China
Agricultural University,
\newline\indent Guangzhou, 510642, P. R. China.}
\email{stwei@scau.edu.cn}

\author{Jun Yang$^\S$}
\address{Jun Yang,
\newline\indent School of Mathematics and Information Science,
Guangzhou University,
\newline\indent Guangzhou 510006, P. R. China.
}
\email{jyang2019@gzhu.edu.cn}

\date{\today}

\thanks{$\S$ Corresponding author: Jun Yang, jyang2019@gzhu.edu.cn}

\begin{abstract}
We consider the inhomogeneous Allen-Cahn equation
$$
\epsilon^2\Delta u\,+\,V(y)(1-u^2)\,u\,=\,0\quad \mbox{in}\  \Omega,
\qquad
\frac {\partial u}{\partial \nu}\,=\,0\quad \mbox{on}\  \partial \Omega,
$$
where $\Omega$ is a bounded domain in ${\mathbb R}^2$ with smooth boundary $\partial\Omega$ and $V(x)$ is a positive smooth function, $\epsilon>0$ is a small parameter,
$\nu$ denotes the unit outward normal of $\partial\Omega$.
For any fixed integer $N\geq 2$, we will show the existence of a clustered solution $u_{\epsilon}$ with $N$-transition layers
near $\partial \Omega$ with mutual distance $O(\epsilon|\ln \epsilon|)$,
provided that  the generalized  mean curvature $\mathcal{H} $ of $\partial\Omega$ is positive    and    $\epsilon$ stays away from a discrete set of values at
which resonance occurs.
Our result is an extension of those (with dimension two) by
A. Malchiodi, W.-M. Ni, J. Wei in Pacific J. Math. (Vol. 229, 2007, no. 2, 447-468)
and  A. Malchiodi, J. Wei in J. Fixed Point Theory Appl. (Vol. 1, 2007, no. 2, 305-336).

\vspace{3mm}

{\textbf{Keywords:} Inhomogeneous Allen-Cahn equation, Phase transition layers, Resonance, Toda system, Clustering}

\vspace{2mm}

{\textbf{AMS Subject Classification:} 35B34, 35J25.}
\end{abstract}

\maketitle


\section{Introduction}\label{section1}

We consider the   inhomogeneous Allen-Cahn equation
\begin{equation}
\label{originalproblem}
\epsilon^2\Delta u\,+\,V(y)(1-u^2)\,u\,=\,0\quad \mbox{in}\  \Omega,
\qquad
\frac {\partial u}{\partial \nu}\,=\,0\quad \mbox{on}\  \partial \Omega,
\end{equation}
where $\Omega$ is a smooth and bounded domain in ${\mathbb R}^{ \mathrm{d}}$,
$\epsilon$ is a small positive parameter,
$\nu$ is the unit outer normal to $\partial\Omega$,  $ V(y)$ is a positive smooth function on $ \bar{\Omega}$. The non-constant function $V$ represents the spatial inhomogeneity. The function $u$ represents a continuous realization of the phase present in a material confined to the region $\Omega$ at the point $y$ which, except for a narrow region, is expected to take values close to $+1$ or $-1$.
A component of the set $\{y\in \Omega : u(y)=0 \}$ is called an interface or an phase transition layer of $u$.

\medskip
Here we  mention another type inhomogeneous Allen-Cahn equation (called Fife-Greenlee mode)
\begin{align}\label{Fife-Greenleeproblem}
\epsilon^2\Delta{u} + \bigl({u}-\mathbf{a}({y})\bigr)(1-{u}^2) = 0\quad \mbox{in} \ \Omega,\ \ \
\frac{\partial{u}}{\partial\nu}=0 \quad \mbox{on}\ \partial\Omega.
\end{align}
Problem \eqref{Fife-Greenleeproblem} has been studied extensively in recent years.  See \cite{ab1, abc1, as, amp, DancerYan1, DancerYan2, delP1, delP2, delPKowWei2, donascimento,Du1, Du2, DuWei, Fife, FifeGreen, HaleSaka, MahMalWei, TWY, JWeiYang3} for backgrounds and references.
The case $ V(y)\equiv 1$ or $\mathbf{a}(y) =0$   corresponds to the Allen-Cahn equation \cite{ac}
\begin{equation}\label{AllenCahn}
\epsilon^2\Delta{u}+ {u}(1-{u}^2)=0\quad \mbox{in} \ \Omega,\ \ \
\frac{\partial{u}}{\partial\nu} = 0 \quad \mbox{on}\ \partial\Omega,
\end{equation}
for which extensive literature on transition layer solution is available,
see for instance \cite{abf1, acf, FlorPad,  KohnSter, Kow, MahNiWei, MahWei, Modica, NakaTana, PacarRitor, PadiTone, RabinStre, RabinStre2, SternZum}, and the references therein.

\medskip
For the inhomogeneous Allen-Cahn equation \eqref{originalproblem}, there are also some results recently. In the case of dimension $N=1$, it is shown in \cite{Nakashima} that problem \eqref{originalproblem} has interior layer solutions, and the transition layers can appear only near the local minimum and local maximum points of the coefficient $V$ and that at most one single layer can appear near each local minimum point of $V$.

\medskip
For {\em\bf{the interior phase transition phenomena (away from $\partial\Omega$)}} to problem \eqref{originalproblem} with inhomogeneity $V$ on higher dimensional domain,
if $ \tilde \Gamma$ is  a closed  curve in $\Omega\subset{\mathbb R}^2$  satisfying the stationary and  non-degeneracy  conditions with respect to $\int_{\tilde \Gamma} V^{\frac 12}$,
Z. Du and C. Gui \cite{DuGui} constructed a solution with a layer near $\tilde \Gamma$, see also \cite{LiNakashima}.
Later on, J.  Yang and X.  Yang  \cite{YangYang}   constructed  clustered interior phase transition layers, see also \cite{DuWang}.
On the other hand,
X. Fan, B. Xu and J. Yang \cite{FXY} constructed a solution with single {\em \bf{interior phase transition layer connecting $\partial\Omega$}} near a curve ${\hat\Gamma}$,
which connects perpendicularly the boundary $\partial\Omega$ and is  also stationary and  non-degenerate with respect to $\int_{\hat \Gamma} V^{\frac 12}$.
Clustered interior phase transition layers connecting $\partial\Omega$ can be found in the paper by S. Wei and J.  Yang \cite{SWeiYang2}.

\medskip
For the  {\em boundary interface phenomena}, here we will mention some works on the problem \eqref{AllenCahn}.
If $\Omega\subset{\mathbb R}^{\mathrm d}$ is a unit ball,  A. Malchiodi,  W.-M. Ni and J. Wei \cite{MahNiWei} constructed a radially symmetric solution $u_\epsilon$
 having $N$ interfaces  $ \{(r, \Theta): u_\epsilon(r) =0 \} = \bigcup_{j=1}^N \{(r, \Theta): r= r_j^\epsilon\}$ such that
$$
1=r_0^\epsilon> r_1^\epsilon > r_2^\epsilon>\cdots>r_N^\epsilon,
\qquad
r_{j-1}^\epsilon- r_j^\epsilon =O(\epsilon |\ln \epsilon|),\quad \forall\, j=1, \cdots, N.
$$
Here $(r, \Theta)$ are the sphere coordinates in ${\mathbb R}^{\mathrm d}$.
 For the non-radial case,  A. Malchiodi and J. Wei in \cite{MahWei}  showed the existence of  single  boundary interface under the condition that the mean curvature of $\partial \Omega$ is positive and $\epsilon$ stays away from a discrete set of values at which resonance occurs.

\medskip
In the present paper,  we will make an extension of the results for problem \eqref{AllenCahn} in \cite{MahNiWei, MahWei}, and consider the existence of {\em clustering boundary transition layers} for problem \eqref{originalproblem}.
It will be shown that the inhomogeneity term $V(y)$ as well as the boundary of $\Omega$  will play  an important role in the construction of solutions, see Remark \ref{remark12}.
To avoid too much tedious computations, we here only consider the two dimensional case, i.e. $\mathrm{d}=2$ in \eqref{originalproblem}.

\medskip
For simplicity of setting, let $\partial\Omega$ be a simple closed curve in $\R^2$  and $\ell=|\partial\Omega|$ be the total length of the boundary $\partial\Omega$. We consider the natural parameterization  $\gamma(\theta)$ of $\partial\Omega$ with positive orientation, where $\theta$ denotes arclength parameter measured from a fixed point of  $\partial\Omega$. For sufficiently small $\delta_0$, points $y$ near $\partial\Omega$ in $\R^2 $ can be represented in the form
\begin{align}\label{fermi}
y= \gamma(\theta) -t \nu(\theta), \quad |t|<\delta_0,\quad \theta\in [0,\ell),
\end{align}
where $ \nu(\theta) $ denotes the unit outer normal to $\partial\Omega$.
It is well known that $H(x)=\tanh\bigl(\frac{x}{\sqrt{2}}\bigr)$ is the unique heteroclinic solution of the problem
\begin{equation}
\label{blocksolution}
H{''}+(1-H^2)H=0\quad \mbox{in}\ \mathbb{R} ,
\quad
H(\pm \infty)=\pm 1,
\quad
H(0)=0.
\end{equation}
In the local coordinates $(t, \theta)$ as in \eqref{fermi}, the main theorem reads:

\begin{theorem}\label{main1}
Let $\Omega$ be a smooth and bounded domain in $\mathbb{R}^2$ and $V (y)$ be a positive smooth function in  $\bar \Omega $.
Assume that the generalized mean curvature of $\partial\Omega$ is positive, i.e.
\begin{align}\label{meancurvaturepositive}
\mathcal{H}(\theta)  := k(\theta)  - \frac{V_t(0, \theta ) }{ 2V(0, \theta) }>0\quad\mbox{on }\, \partial\Omega,
\end{align}
where $ k(\theta) $ is the curvature of $\partial \Omega$.
Then for any fixed positive integer $ N$,  there exists a sequence $\{\epsilon_i: i=1, 2,\cdots\}  $  of $\epsilon$ such that
problem \eqref{originalproblem} has a clustered solutions $u_{\epsilon_i} $ with $N $-phase transition layers at mutual distance $O(\epsilon_i | \ln { \epsilon_i} | )$.
Near the boundary  $\partial\Omega$,  $u_{\epsilon_i} $  has the form of
\begin{align}
u_{\epsilon_i}(y)=\sum_{j=1}^N (-1)^j  H\Big(V^\frac{1}{2} (0, \theta)    \frac{ t - \epsilon_i f_j(\theta)}{ \epsilon_i }   \Big)
\,+\,
\frac{1}{2}\Big((-1)^{N}+ 1\Big)
\,+\,
o(1).
\end{align}
and, away from $\partial \Omega,$
  \begin{align}
 u_{\epsilon_i}(y)   \rightarrow ( - 1 )^N \quad \text{as} \qquad    {\epsilon_i} \rightarrow 0.
\end{align}
The functions  $\{f_j: j=1,\cdots, N\} $ satisfy
\begin{align}\label{f1theta}
f_1(\theta)\,=\,    \frac{1}{2\sqrt{2} V^\frac{1}{2} (0, \theta)  }\Big[ \ln{\frac{1}{ N\epsilon_i}}
-   \ln\mathcal{H}(\theta)  +\ln \big(9\gamma_1V (0, \theta)\big)  \Big]
\,+\, O(\epsilon^{1/2}),
\end{align}
and, for $j=2, \cdots, N, $
\begin{equation}\label{consraintsoffk}
f_j(\theta)-f_{j-1}(\theta)= \frac{1}{\sqrt{2}V^\frac{1}{2} (0, \theta)  }\Big[ \ln{\frac{1}{(N+1-j)\epsilon_i}}
-   \ln\mathcal{H}(\theta)  + \ln \big(9\gamma_1 V (0, \theta)\big)  \Big]
\,+\, O(\epsilon^{1/2}),
\end{equation}
where the constant  $\gamma_1$ is defined in  \eqref{gamma01}.
\qed
\end{theorem}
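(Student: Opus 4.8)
The plan is a Lyapunov--Schmidt (finite-dimensional) reduction built on an explicit $N$-layer ansatz, with the interface locations $\epsilon f_j(\theta)$ fixed by solving a Jacobi--Toda type system on $\partial\Omega$, and with the restriction to a sequence $\epsilon_i\to 0$ forced by resonance. First I would pass to the Fermi coordinates $(t,\theta)$ of \eqref{fermi}, in which \eqref{originalproblem} takes the form $\epsilon^2\partial_{tt}u+\epsilon^2 a(t,\theta)\partial_t u+\epsilon^2 b(t,\theta)\partial_{\theta\theta}u+(\text{lower order in }\theta)+V(t,\theta)(1-u^2)u=0$, with $a(0,\theta)=-k(\theta)$, together with $\partial_t u=0$ at $t=0$. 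As first approximation I take
\[
w \,=\, \sum_{j=1}^N (-1)^j H\!\Big(V^{1/2}(0,\theta)\,\tfrac{t-\epsilon f_j(\theta)}{\epsilon}\Big) \,+\, \tfrac12\big((-1)^N+1\big),
\]
and correct it by functions $\phi_j$, obtained by solving one-dimensional ODEs in the stretched variable, that cancel the leading discrepancies produced by the $t$- and $\theta$-dependence of $V$, by the curvature, and by the overlap of consecutive profiles. The crucial point here is that, expanding $V(t,\theta)=V(0,\theta)+V_t(0,\theta)t+\cdots$ and using \eqref{blocksolution}, the curvature term $\epsilon^2 a\,\partial_t w$ and the linear-in-$t$ part of $V(t,\theta)(1-w^2)w$ combine, after projection, precisely into the generalized mean curvature $\mathcal{H}(\theta)=k(\theta)-V_t(0,\theta)/\bigl(2V(0,\theta)\bigr)$.

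Next I would estimate $S\bigl(w+\sum_j\phi_j\bigr)$ in suitable weighted norms. Because the gaps $\epsilon(f_{j+1}-f_j)$ have size $\epsilon|\ln\epsilon|$, the nearest-neighbour interactions are only of polynomial size $\sim c_j\,\epsilon$, not exponentially small, so they enter the reduced equations at the same order as the $\mathcal{H}$-term. Projecting the error against the approximate kernel elements $Z_j:=H'\bigl(V^{1/2}(0,\theta)(t-\epsilon f_j(\theta))/\epsilon\bigr)$ --- the generators of translating the $j$-th layer --- produces a coupled system of second-order ODEs for $\{f_j\}$ on the circle $\mathbb{R}/\ell\mathbb{Z}$, of Jacobi--Toda form, whose leading-order balance between the curvature term and the interaction term is exactly \eqref{f1theta}--\eqref{consraintsoffk}; the hypothesis $\mathcal{H}>0$ is what makes the logarithms on the right-hand sides well-defined and the system solvable, and the Neumann condition at $t=0$ generates the boundary ``image layer'' accounting for the factor $\tfrac12$ in \eqref{f1theta}.

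For the linear theory I would invert, with $\epsilon$-dependent but controlled bounds, the full linearized operator around $w+\sum_j\phi_j$, restricted to the orthogonal complement of $\mathrm{span}\{Z_j\}$, using the nondegeneracy of the single heteroclinic (the operator $\partial_{xx}+(1-3H^2)$ has kernel exactly $\mathbb{R}H'$) together with a gluing that patches the boundary-layer region to the constant state $(-1)^N$ in the interior of $\Omega$. A contraction mapping then yields, for every admissible $\{f_j\}$, a genuine solution modulo $\mathrm{span}\{Z_j\}$; substituting back gives the bifurcation (reduced) equations for $\{f_j\}$, whose solution is a small perturbation of \eqref{f1theta}--\eqref{consraintsoffk}.

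The main obstacle is the resonance. After the $O(\epsilon^{1/2})$-precise choice of $\{f_j\}$, the linearization of the reduced system is a Jacobi-type operator acting on functions over $\partial\Omega$ whose spectrum depends on $\epsilon$ --- through both the logarithmic spacing and the Fourier modes along $\partial\Omega$ --- and can cross $0$; hence one cannot solve for all small $\epsilon$, but only along a sequence $\epsilon_i\to 0$ chosen, by a counting/measure argument in the spirit of \cite{MahWei}, so that this reduced operator stays uniformly invertible. Carrying this out while keeping careful track of how the inhomogeneity $V$ deforms the Toda system, and verifying that all remainders are genuinely of order $O(\epsilon^{1/2})$ so that the fixed-point scheme closes, is the technical heart of the proof.
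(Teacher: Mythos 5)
Your proposal follows essentially the same strategy as the paper: Fermi coordinates, an $N$-layer ansatz built from the heteroclinic $H$ (with the Neumann condition enforced via an even "image layer" reflection across $\partial\Omega$, which is indeed what produces the extra factor of $\tfrac12$ in $f_1$), identification of the generalized mean curvature $\mathcal{H}=k-V_t/(2V)$ from combining the curvature and $V$-inhomogeneity projections, a Toda-type reduced system for the $f_j$ coming from $O(\epsilon)$ nearest-neighbour interactions, the inner--outer gluing to match the boundary layer to $(-1)^N$, and restriction to a sequence $\epsilon_i$ to avoid resonance in the reduced Jacobi operator. The paper's implementation is more layered than your sketch suggests — it builds three successive approximations $u_1,u_2,u_3$ and decomposes each $f_j$ as $\dot f_j+\bar f_j+\check f_j+\tilde f_j$, solving a nonlinear algebraic system for $\bar f_j$ and $\check f_j$ and leaving only $\tilde f_j=O(\epsilon^{1/2+\sigma})$ to the reduction, then diagonalizes the Toda linearization via an orthogonal matrix before invoking the resonance lemma — but these are refinements of the scheme you describe, not a different route.

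One small caveat: in your written ansatz $w$ you omit the reflected profiles $\bar H_j\bigl(\beta(s+f_j)\bigr)$ (you allude to them only afterwards as the "image layer"); without including them explicitly in the approximate solution, $\partial_t w|_{t=0}\neq 0$ and the boundary error would be of order $\epsilon$, too large for the scheme. The paper makes them part of $u_1$ from the start, so that ${\mathcal D}(u_1)=0$ exactly, and all subsequent corrections are likewise built in reflected pairs.
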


\medskip
Some words are in order to explain the above results.

\begin{remark}\label{remark12}\

{\em
 Let us consider a Riemannian manifold ${\mathcal M}$  of $n$ dimension with volume element $\mathrm{d}{\mathcal V}_0$
and its $n-1$ dimensional submanifold ${\mathcal N}$ with mean curvature ${\mathbb H}$.
By the comments of F. Morgan (Page 835 in \cite{Morgan}), in density manifold ${\mathcal M}$
with volume element $\mathrm{d}{\mathcal V}=e^{\Psi}\mathrm{d}{\mathcal V}_0$,
M. Gromov first introduced the generalization of mean curvature ${\mathcal H}_\Psi$ of submanifold ${\mathcal N}$ in the form
$$
{\mathcal H}_\Psi={\mathbb H}-\frac{1}{n-1}\frac{\partial\Psi}{\partial{\tilde\nu}},
$$
where ${\tilde\nu}$ is the normal of ${\mathcal N}$.
We now consider ${\mathbb R}^2$ as a Riemannian manifold with the metric $g=V(y)\,({\mathrm d}{y}_1^2+{\mathrm d}{y}_2^2)$,
in which the volume density is $e^{\Psi}$ with $\Psi=\ln V^{1/2}$. The curve $\partial\Omega$ is a submanifold of the density manifold ${\mathbb R}^2$
and then its generalized mean curvature is $\mathcal{H}$ in \eqref{meancurvaturepositive}.
If $V\equiv 1$, then $\mathcal{H}=k$ and \eqref{meancurvaturepositive} is exactly the requirement of positive curvature of $\partial\Omega$ in \cite{MahWei}.
It is then obvious that Theorem \ref{main1} is a natural extension of the results of dimension two in \cite{MahNiWei} and \cite{MahWei}.

\medskip
As a submanifold of the density Riemannian manifold $\R^2$,  $\partial\Omega$ has similar effects to the interfaces as described in the Introduction section of \cite{MahWei}
due to the homogeneous Neumann boundary condition. Whence, we will also encounter resonance phenomena in the procedure of constructing solutions in Theorem \ref{main1}.
However, there is not any resonance phenomenon in the radial symmetric case for the clustering of multiple interfaces in \cite{MahNiWei}.
This is the reason that we shall do much more analysis than \cite{MahNiWei} to get more accurate asymptotical expressions of the parameters $f_1, \cdots, f_N$
as well as the formula of the approximate solution $u_3$ in \eqref{u3}.

\medskip
The asymptotic formulae in \eqref{f1theta}-\eqref{consraintsoffk} will be formally derived in Section \ref{section3.1}.
The behaviors in \eqref{f1theta}-\eqref{consraintsoffk} show that the boundary layers squeeze more and more tightly toward $\partial\Omega$.
The fact is due to the effect of $\partial\Omega$ toward the boundary layers caused by the homogeneous Neumann boundary condition in \eqref{originalproblem}.
This is quite different from the results in previous papers (such as \cite{delPKowWei3}, \cite{delPKowWeiYang} on clustering interfaces for \eqref{AllenCahn},
and also \cite{SWeiYang2}, \cite{YangYang} on interior clustering interfaces for \eqref{originalproblem})
in which multiple layers in the cluster distribute evenly along the limit set (The distances between neighbouring layers are almost the same in the main order).

\medskip
As we have mentioned that, in order to handle the resonance, we shall first construct a good approximate solution in such a way that
it will locally solve problem \eqref{originalproblem} very well. This can be carried out by adjusting functions $f_1,\cdots, f_N$
and then choosing correction terms step by step in Section \ref{section3}.
In fact, we will set
\begin{equation}\label{1.11}
f_j(\theta)=\dot{f}_j(\theta) + {\bar f}_j(\theta) +\check{f}_j(\theta) \,+\,{\tilde f}_j(\theta),\quad \forall\, j=1,\cdots, N.
\end{equation}
The readers can refer to \eqref{f01}-\eqref{f2N}, \eqref{barfexpression1}-\eqref{barfexpressionN}, \eqref{checkfnorm}
for $\dot{f}_j(\theta),  {\bar f}_j(\theta), \check{f}_j(\theta)$.
On the other hand, all functions ${\tilde f}_1, \cdots, {\tilde f}_N$ with constraints in \eqref{ddotfnorm} will be found by the reduction method in Sections \ref{section6}-\ref{section7}.
Please note that they will solve a small perturbation form of the following system,
\begin{align}
\begin{aligned}\label{todaac0}
 &-\, \epsilon^2 \gamma_0 \beta {\tilde f}''_j
 \,+\,
 \epsilon\,6 \sqrt 2  \beta^2
 \gamma_{1,j}  {\mathbf k}_j    \Big[e^{-  \sqrt 2 \beta  ({\tilde f}_j-{\tilde f}_{j-1})}-1 \Big]
 \\[2mm]
& \,-\,
 \epsilon\,  6 \sqrt 2   \beta^2  \gamma_{2,j}    {\mathbf k}_{j+1} \,\Big[  e^{-  \sqrt 2 \beta  ({\tilde f}_{j+1}-{\tilde f}_j)}  -1 \Big] \approx 0,
\quad
 j=1, \cdots, N,
\end{aligned}
\end{align}
for a universal constant $\gamma_0$ in \eqref{gamma01}.
The functions $\beta$, $\gamma_{1,j}, \gamma_{2,j},  {\mathbf k}_j$ are given in \eqref{beta}, \eqref{ga1n}, \eqref{ga2n}, \eqref{mathbf kj}.
Therefore, the asymptotical behaviors of the components of $f_1,\cdots, f_N$ in \eqref{1.11} will imply \eqref{f1theta}-\eqref{consraintsoffk}.
}
\qed
\end{remark}

\medskip
The remaining part of this paper is devoted to the proof of Theorem \ref{main1}, which will be organized as follows:

\begin{itemize}
\item[1.]
In Section \ref{section2}, we will write down the equations in local forms.

\item[2.]
In Section \ref{section3}, we shall construct the first approximation to a real solution in \eqref{firstappro1} and then compute the error.
In order to improve the approximation, suitable correction terms will be added step by step and a good approximate solution $u_3$ will be given in \eqref{u3}.
These tedious analysis will constitute the main part the present paper.

\item[3.]
Next, we  set up the inner-outer gluing scheme \cite{delPKowWei1} in Section \ref{section4}, so that we can deduce the projected problem (\ref{Projectedproblem1})-(\ref{projectedproblem3}) and give the existence of its solutions in Proposition \ref{proposition3.1}.

\item[4.]
In order to get a real solution, the well-known infinite dimensional reduction method will be needed in Sections \ref{section6}-\ref{section7}.
The final step is to adjust the parameters ${\tilde f}_1, \cdots, {\tilde f}_N$, which satisfy a nonlinear coupled  system of second order differential equations, see (\ref{ts2a}).
This will be done in Section \ref{section7}.
Note that we also need suitable analysis from \cite{delPKowWei2,delPKowWeiYang, JWeiYang3} to deal with the resonance phenomena in Lemma \ref{lemma6point3}.

\item[5.]
Some tedious computations and a useful lemma  will be given in Appendices \ref{appendixA}-\ref{appendixB}.
For the convenience of the readers, a collection of notation and conventions will be provide at the beginning of Section \ref{section3}.
\end{itemize}

\medskip
\section{Local forms of the problem}\label{section2}

For the convenience of expressions, by the rescaling $y=\epsilon \tilde{y}$, problem \eqref{originalproblem} can be rewritten as
\begin{equation}
\label{originalproblem1}
\Delta u\,+\,V(\epsilon {\tilde y})(1-u^2)\,u\,=\,0\quad \mbox{in}\  \Omega_\epsilon,
\qquad
\frac {\partial u}{\partial \nu}\,=\,0\quad \mbox{on}\  \partial \Omega_\epsilon,
\end{equation}
where $\Omega_\epsilon=\Omega/\epsilon$, $\partial\Omega_\epsilon=\partial\Omega/\epsilon$.

\medskip
Recall the coordinates $(t, \theta)$ given in \eqref{fermi} and
let
\begin{equation}
(s, z)=\frac{1}{\epsilon}(t, \theta),
\qquad
z\in \Big[0,\frac{\ell }{\epsilon}\,\Big),\quad s \in \Big(0,\frac{\delta_0}{\epsilon}\Big),
\end{equation}
 be the natural stretched  coordinates associated to the curve  $\partial\Omega_\epsilon=\partial\Omega/\epsilon$.
Any $\tilde y$ near $\partial\Omega_\epsilon$ in $\R^2 $ can be represented in the form
\begin{equation}\label{sz}
\tilde y= \frac{1}{\epsilon}\gamma(\epsilon z) -s \nu(\epsilon z).
\end{equation}
We then get
\begin{equation}
 \frac{\partial {\tilde y}}{\partial s}=-\nu,
  \qquad
 \frac{\partial {\tilde y}}{\partial z}=\gamma_\theta-\epsilon s \nu_\theta.
\end{equation}
Since
$$
|\gamma_\theta|=1,
\quad
|\nu|=1,
\quad\mbox{and}\quad
\nu_\theta= k  \gamma_\theta,\quad \gamma_\theta\cdot\nu=0,
$$
the metric matrix is
\begin{equation}
\mathfrak{g}\,=\,\left(\begin{array}{cc}
\mathfrak{g}_{11} & \mathfrak{g}_{12}
\\
\mathfrak{g}_{21} & \mathfrak{g}_{22}
\end{array}
\right)={
\left(\begin{array}{ccc}
1& 0\\
0& \big(1-\epsilon  k s\big)^2
\end{array}
\right )}.
\end{equation}
So the determinant of the metric matrix is
$$\det{\mathfrak{g}}=\big(1-\epsilon s k\,\big)^2,$$
and
\begin{equation}
\mathfrak{g}^{-1}\,=\,\left(\begin{array}{cc}
\mathfrak{g}^{11} & \mathfrak{g}^{12}
\\
\mathfrak{g}^{21} & \mathfrak{g}^{22}
\end{array}
\right)={
\left( \begin{array}{ccc}
1 & 0\\
0& \big(1-\epsilon   k s \,\big)^{-2}\end{array}
\right)}.
\end{equation}
The Laplacian-Beltrami operator has the form
\begin{equation}
\begin{split}
\Delta_\mathfrak{g}
&=\frac{1}{1-\epsilon ks} \frac{\partial}{\partial s} \Big[ \big(1-\epsilon ks\big) \frac{\partial}{\partial s}    \Big] +\frac{1}{1-\epsilon ks}\frac{\partial}{\partial z} \Big[ \big(1-\epsilon ks\big)^{-1} \frac{\partial}{\partial z}    \Big]
\\[1mm]
&=\frac{\partial^2}{\partial s^2}- \frac{\epsilon k}{1-\epsilon ks} \frac{\partial }{\partial s} + \frac{1}{(1-\epsilon ks)^2 }\frac{\partial^2}{\partial z^2}+\frac{\epsilon^2 s k'}{(1-\epsilon ks)^3 }\frac{\partial}{\partial z}.
\end{split}
\end{equation}
\medskip
  In the coordinates $(s,z)$, we will give the computations of $ \nabla u$ as follows
 \begin{equation*}
 \nabla u \,=\, \mathfrak{g}^{ij} \,\frac{\partial u }{\partial {\hat{y}_i} } \,  \frac{\partial { \tilde y} }{\partial {\hat y}_j}
 \,=\,\frac{\partial u}{\partial s }  \,  \frac{ \partial { \tilde y}}{ \partial s} 
 \, +  \, 
 \big(1-\epsilon s  k \,\big)^{-2} \,\frac{\partial u}{\partial z }  \,   \frac{ \partial { \tilde y}}{ \partial z} ,
 \end{equation*}
 where
 $\hat{y}_1=s,\, \hat{y}_2=z $.
Since $ \frac{\partial {\tilde y}}{\partial s}=-\nu$, then
the normal derivative $ \frac{\partial u}{\partial \nu }$ has a local form as follows
\begin{equation}
\frac{\partial u}{\partial \nu } \, =\,   \nabla u  \cdot \nu
=\,  -\Big[\frac{\partial u}{\partial s }  \,  \frac{ \partial { \tilde y}}{ \partial s}\cdot  \frac{ \partial { \tilde y}}{ \partial s}
 \, +  \,
  \big(1-\epsilon s  k \,\big)^{-2}  \, \frac{\partial u}{\partial z } \,  \frac{ \partial { \tilde y}}{ \partial z} \cdot   \frac{ \partial { \tilde y}}{ \partial s} \, \Big]
\, = \,  - \frac{\partial u(s,z) }{\partial s }.
\end{equation}

\medskip
Hence, problem  \eqref{originalproblem1} can be locally recast as follows
\begin{equation}
\label{originalproblem2}
u_{ss}+ u_{zz} +B_1(u) +V(\epsilon s,\epsilon z)F(u)\,=\,0,
\quad\forall\, (s, z)\in(0, \delta_0/\epsilon)\times(0, \ell/\epsilon),
\end{equation}
with boundary condition
\begin{equation}\label{boundarycondition}
{\mathcal{D}}(u)=0,\quad \forall\, z\in[0, \ell/\epsilon).
\end{equation}
In the above, the boundary operator is given by
\begin{equation}\label{boundary-operator}
{\mathcal{D}}(u)\, := \, - \frac{\partial u(s,z) }{\partial s }\big|_{s=0},
\end{equation}
and the nonlinear term is
\begin{equation}\label{Fu}
F(u)=u-u^3.
\end{equation}
Moreover, the linear differential operator $B_1$ has the form
\begin{equation}
B_1(u)=- \frac{\epsilon k u_s}{1-\epsilon ks}+\Big[ \frac{1}{(1-\epsilon ks)^2 } -1 \Big]u_{zz}+\frac{\epsilon^2 s k'}{(1-\epsilon ks)^3 }u_z.
\end{equation}

\medskip
Furthermore, we can expand the operator $B_1$ as
\begin{equation}\label{B1u}
B_1(u)=-\big(\epsilon k +\epsilon^2 sk^2\big) u_s+B_0(u),
\end{equation}
with
\begin{equation}\label{B0u}
B_0(u) = \epsilon^2sa_1u_z+ \epsilon s a_2 u_{zz} +\epsilon^3 s^2a_3 u_s,
\end{equation}
for certain smooth functions $a_j(t,\theta),\,j=1,2,3.$
In the neighborhood of $\partial \Omega_\epsilon$, by taking the Taylor expansion,
we also expand
\begin{equation}\label{Vexpansion}
V(\epsilon s,\epsilon z)=V(0,\epsilon z)+\beta_1(\epsilon z)\epsilon s+\frac{1}{2}\beta_2(\epsilon z)\epsilon^2s^2+a_4(\epsilon s,\epsilon z)\epsilon^3s^3,
\end{equation}
for a smooth function $a_4(t,\theta)$ and
\begin{equation}\label{beta1beta2}
\beta_1(\theta)=V_t(0, \theta),
\qquad
\beta_2(\theta)=V_{tt}(0, \theta).
\end{equation}
Then $u$ solves \eqref{originalproblem2} if and only if
\begin{equation}\label{S(v)}
{\bf{S}}(u) := u_{ss}+ u_{zz} +V(0,\epsilon z)F(u)
+B_2(u) +B_3(u)
+B_4(u)=0,
\end{equation}
for $ (s, z)\in(0, \delta_0/\epsilon)\times(0, \ell/\epsilon),$ where
\begin{align}
\label{B2}
B_2(u)=B_0(u)+a_4(\epsilon s,\epsilon z)\epsilon^3s^3u(1-u^2), 
\\[2mm]
\label{B3}
B_3(u)=-\big(\epsilon k+\epsilon^2 sk^2 \big)  u_s,
\\[2mm]
\label{B4}
B_4(u)=  \Big[\epsilon s\beta_1+\frac{1}{2}\epsilon^2s^2\beta_2\Big]   (u-u^3).
\end{align}

\medskip
\section{Local approximate solutions}\label{section3}
The main objective of this section is to construct a suitable approximate solution in local coordinates $(s, z)$ near $\partial \Omega_\epsilon$
and then evaluate its error terms.

\medskip
In the following, we will use $H(x)$ in \eqref{blocksolution} as the basic block to construct solutions with phase transition layers.
It is well known that $H$ is odd and enjoys the following behaviors
\begin{equation}\label{asymptoticofH}
\begin{split}
H(x)&=\pm \big( 1-2e^{-\sqrt{2}|x|} \big) +O\big(e^{-2\sqrt{2}|x|}\big),\quad\; {\rm as}\  x\rightarrow \pm\infty,
\\[1mm]
H{'}(x)&=2\sqrt{2}e^{-\sqrt{2}|x|}+O\big(e^{-2\sqrt{2}|x|}\big),\qquad \quad  {\rm as}\  |x|\rightarrow +\infty.
\end{split}
\end{equation}
From equation \eqref{blocksolution}, it is easy to derive that
\begin{equation*}
1-H^2(x)\,=\,\sqrt{2}\,H_x(x),\qquad
\int_\mathbb{R}{H_x^2}\,{\rm d}x=\,\frac{2\sqrt{2}
}{3}.
\end{equation*}
Integrating by parts, we have
\begin{equation}\label{identity1}
2\int_{\mathbb R} x\, H_x\,H_{xx}  \,\mathrm {d }x= -\int_{\mathbb R} H_x^2 \,\mathrm {d }x=-\frac{2\sqrt 2}{3}.
\end{equation}
From  \eqref{blocksolution}, it is also trivial to derive that
\begin{equation}\label{identity2}
3\int_{\mathbb R} (1-H^2)  e^{-\sqrt{2}\, x } \, H_x  \,\mathrm {d }x=-\int_{\mathbb R} (H_{xxx} -2H_x) e^{-\sqrt{2}\, x } \,\mathrm {d }x= 8.
\end{equation}

\bigskip
\noindent{\bf Notation:}
{{\it
We pause here to set the conventions
\begin{equation}\label{gamma01}
\gamma_0= \int_\mathbb{R}\,{H_x^2}\, {\mathrm{d}}x=\frac{2\sqrt{2}}{3},
\qquad
\gamma_1= \int_\mathbb{R}\,{ \, e^{-\sqrt{2} x}}H_{x}^2 \, {\mathrm{d}}x
 \, =\, \int_\mathbb{R}\,{ e^{\sqrt{2}x}}\, H_{x}^2 \, {\mathrm{d}}x\, =\, \frac{8}{3\sqrt 2}.
\end{equation}
The set ${\mathfrak S}$ represents the strip in $\mathbb{R}^2$ of the form
\begin{equation}
\label{huaxies}
{\mathfrak S}=\Big\{(s,z): s\in  (0,  +  \infty ),\quad 0<z<{\ell}/{\epsilon}\Big\}.
\end{equation}
The functions $\beta, \beta_1, \beta_2$ are defined in \eqref{beta}, \eqref{beta1beta2}.
\qed
}}

\subsection{Formal derivation of the interfaces}\label{section3.1}

For any positive integer $N$, we assume that the locations of
the phase transition layers are characterized by functions $s\,=\,f_j(\epsilon z),\, j=1,\cdots,N$ such that
\begin{eqnarray*}
 f_j:\,(0,\ell)\to\mathbb{R},
\qquad
0<f_1<f_2<\cdots<f_N,
\end{eqnarray*}
\begin{equation*}
f_{j}(0)\,=\,f_j(\ell),
\qquad
f'_{j}(0)\,=\,f'_j(\ell).
\end{equation*}
Recalling $H$ given in (\ref{blocksolution}) and setting
\begin{equation}\label{beta}
\beta(\theta)=V^{1/2}(0, \theta),
\end{equation}
we set the profile of a solution
\begin{equation*}
 v(s,z)\,\equiv\,\sum_{j=0}^N\, (-1)^{j}\,H\Big(\beta(\epsilon z)\big(s-f_j(\epsilon z)\big)\Big)
\,-\,\frac{(-1)^{N}-1}{2},
\quad
s>0,\, z\in(0, \ell/\epsilon),
\end{equation*}
where $f_0= -f_1$.
The requirement that $f_0=-f_1$ will imply that $\,H\Big(\beta(\epsilon z)\big(s-f_0(\epsilon z)\big)\Big)$ is an even extension
of the function $-H\Big(\beta(\epsilon z)\big(s-f_1(\epsilon z)\big)\Big)$ with respect to $\partial\Omega_\epsilon$
such that $v(s,z)$ can approximately satisfy the homogeneous Neumann boundary condition.
The error is
\begin{align*}
{\bf{S}}(v)=&-\epsilon k\beta \sum_{j=1}^N (-1)^{j} H'\big(\beta(s-f_j)\big)
\,+\,\epsilon \beta_1  \sum_{j=1}^N s (H_j-H_j^3)
\\
&\,-\,\beta^2\Big( v^3-  \sum_{j=1}^N  H_j^3\Big)
\,+\,o(\epsilon).
\end{align*}
We shall adjust $f_0, \cdots, f_N$ with
$$
f_0<0<f_1<\cdots<f_N \quad \mbox{and}\quad f_0= -f_1,
$$
such that ${\bf{S}}(v)$ is approximately orthogonal to $H'\big(\beta(s-f_j)\big)$, $j=1,\cdots, N$.
It is formally equivalent to solving
\begin{align*}
e^{-  \sqrt 2 \,\beta  \,(f_1-f_0)}
\ - \    e^{-  \sqrt 2 \,\beta  \,(f_2-f_1)}
\ -\ \epsilon\frac{\sqrt 2}{12\beta}\Big[k -\frac{\beta_1}{2\beta^2}\Big]\approx 0,
\\[2mm]
 e^{-  \sqrt 2 \,\beta  \,(f_2-f_1)}
\ - \   e^{-  \sqrt 2 \,\beta  \,(f_3-f_2)}
\ -\ \epsilon\frac{\sqrt 2}{12\beta}\Big[k -\frac{\beta_1}{2\beta^2}\Big]\approx 0,
\\[2mm]
\qquad\qquad\cdots\cdots\qquad\qquad\cdots\cdots\qquad\qquad\cdots\cdots\qquad\qquad\cdots\cdots\qquad\qquad
\\[2mm]
e^{-  \sqrt 2 \,\beta  \,(f_{N-1}-f_{N-2})}
\ - \   e^{-  \sqrt 2 \,\beta  \,(f_N-f_{N-1})}
\ -\ \epsilon\frac{\sqrt 2}{12\beta}\Big[k -\frac{\beta_1}{2\beta^2}\Big]\approx 0,
\\[2mm]
e^{-  \sqrt 2 \,\beta  \,(f_N-f_{N-1})}
\ -\ \epsilon\frac{\sqrt 2}{12\beta}\Big[k -\frac{\beta_1}{2\beta^2}\Big]\approx 0.
\end{align*}
These imply that
\begin{align*}
e^{-  \sqrt 2 \,\beta  \,(f_1-f_0)}
\ \approx\ &N\epsilon\frac{\sqrt 2}{12\beta}\Big[k -\frac{\beta_1}{2\beta^2}\Big],
\\[2mm]
e^{-  \sqrt 2 \,\beta  \,(f_2-f_1)}
\ \approx\ &(N-1)\epsilon\frac{\sqrt 2}{12\beta}\Big[k -\frac{\beta_1}{2\beta^2}\Big],
\\[2mm]
\qquad\qquad\cdots\cdots\qquad&\qquad\cdots\cdots\qquad\qquad
\\[2mm]
e^{-  \sqrt 2 \,\beta  \,(f_{N-1}-f_{N-2})}
\ \approx\ &2\epsilon\frac{\sqrt 2}{12\beta}\Big[k -\frac{\beta_1}{2\beta^2}\Big],
\\[2mm]
e^{-  \sqrt 2 \,\beta  \,(f_N-f_{N-1})}
\ \approx\ &\epsilon\frac{\sqrt 2}{12\beta}\Big[k -\frac{\beta_1}{2\beta^2}\Big].
\end{align*}
Whence, we have
\begin{align*}
f_1\ \approx&\ \frac{1}{2\sqrt{2}\beta}\Bigg\{\ln\frac{1}{N\epsilon}
-\ln\frac{\sqrt 2}{12\beta}\Big[k -\frac{\beta_1}{2\beta^2}\Big]\Bigg\},
\qquad\qquad
f_0=-f_1,
\\[2mm]
f_{j}-f_{j-1}\ \approx&\ \frac{1}{\sqrt{2}\beta}\Bigg\{\ln\frac{1}{(N-j+1)\epsilon}
-\ln\frac{\sqrt 2}{12\beta}\Big[k -\frac{\beta_1}{2\beta^2}\Big]\Bigg\} ,\quad j= 2,\cdots, N.
\end{align*}
These asymptotic formulae will guide us to set up the parameters $f_1, \cdots, f_N$ in the sequel.

\subsection{The first approximation}
By previous formal calculations, we first set the parameters as the following
\begin{equation}\label{f01}
\begin{split}
f_0(\theta)\, =\, &    {\dot f}_0(\theta)+\mathfrak{f}_0(\theta)
\quad\mbox{with}\
{\dot f}_0(\theta)=-\frac{1}{2\sqrt{2} \beta(\theta)} \ln{\frac{1}{ N\epsilon}},
\\[2mm]
f_1(\theta)\, =\, &    {\dot f}_1(\theta)+\mathfrak{f}_1(\theta)
\quad\mbox{with}\
 {\dot f}_1(\theta)=\frac{1}{2\sqrt{2} \beta(\theta)} \ln{\frac{1}{ N\epsilon}},
\end{split}
\end{equation}
and for $j=2, \cdots, N,$
\begin{equation} \label{f2N}
f_j (\theta)\, =\,      \dot{f}_j(\theta) + \mathfrak{f}_j(\theta)
\quad\mbox{with}\
\dot{f}_j(\theta)=
\frac{1}{2\sqrt{2} \beta(\theta)} \ln{\frac{1}{ N\epsilon}}
+
\frac{1}{\sqrt{2} \beta(\theta)}  \ln {\, \frac{{\,(N-j) \,}!}{ {\, (N-1)\,}! \epsilon^{j-1} }\,}.
\end{equation}
Note that $\mathfrak{f}_0(\theta),\cdots,\mathfrak{f}_N(\theta)$ are of order $O(1)$ with respect to $\epsilon$ and will be set up step by step in the rest of the present section,
see \eqref{mathfrakfj} and \eqref{hatfj}.
For the convenience of the notation, we will also set
\begin{equation}\label{f0fminus1}
  f_{N+1}(\theta)=+\infty,
\qquad f_0=-f_1,\qquad f_{-1}=-f_2.
\end{equation}

\medskip
By recalling the definition of $H$ given in \eqref{blocksolution}, we let
\begin{equation}\label{HJX}
H_j(x_j)= (-1)^j H( x_j )
\qquad \mbox{with }\  x_j = \beta(\epsilon z)(s-f_j(\epsilon z)),
\end{equation}
\begin{equation}\label{BarHJX}
\bar{H}_j( \bar{ x}_j  )= (-1)^{j+1} H\,( \bar{ x}_j \,)
\qquad \mbox{with }\  \bar{ x}_j=  \beta(\epsilon z)(s+ f_j(\epsilon z) ),
\end{equation}
and define the {\textbf {first approximate solution}} to \eqref{S(v)} in the form
\begin{equation}
\begin{split}
 \label{firstappro1}
u_1(s, z)= & \sum_{j=1}^N   H_j(x_j)
\,+\, \frac{1}{2}\Big((-1)^{N}+ 1\Big)
\,+\, \sum_{j=1}^N  \bar{H}_j( \bar{x}_j  )
\,-\, \frac{1}{2}\Big((-1)^{N+1}+ 1\Big)
\\[1mm]
= &  \sum_{j=1}^N   H_j(x_j)
\,+\,\sum_{j=1}^N  \bar{H}_j( \bar{x}_j  )
\,+\,(-1)^N,
\quad\forall\, (s, z)\in \mathfrak{S}.
\end{split}
\end{equation}
Note that $u_1(s, z) \approx  H_j(x_j)   $ when $ | s- f_j|$ is not too large.

\medskip
The error of the first approximate solution $u_1(s,z)$ is
\begin{equation}\label{S(u1)}
\begin{split}
{\bf{S}}(u_1) \,=\, &u_{1,ss}+ u_{1,zz} +\beta^2 F(u_1)
+B_2(u_1) +B_3(u_1)
+B_4(u_1)
\\
\,=\, &u_{1,zz} +\beta^2 F(u_1)- \beta^2 \sum_{j=1}^N  \Big[ F(H_j)+F(\bar{H}_j)  \Big] +B_2(u_1) +B_3(u_1)
+B_4(u_1),
\end{split}
\end{equation}
 where we have used the fact that
 \begin{equation}
u_{1,ss} + \beta^2 \sum_{j=1}^N   \Big[ F(H_j)+F(\bar{H}_j)  \Big]\,= \, 0.
\end{equation}

\subsubsection{The computations of error terms}

It is easy to get that
$$ \big(\beta(s-f_j)\big)_z = \epsilon \Big[\beta'(s-f_j) - \beta f_j' \Big]= \epsilon \Big[\frac{\beta'}{\beta}x_j - \beta f_j' \Big], $$
$$ \big(\beta(s+ f_j)\big)_z = \epsilon \Big[\beta'(s+ f_j) + \beta f_j' \Big]= \epsilon \Big[\frac{\beta'}{\beta}\bar{x}_j +  \beta f_j' \Big], $$
and
$$
\big(\beta(s-f_j)\big)_{zz}
= \epsilon^2\Big[\beta''(s-f_j) - 2\beta'f_j' -\beta f_j''\Big]=\epsilon^2\Big[\frac{\beta''}{\beta}x_j - 2\beta'f_j' -\beta f_j''\Big],
$$
$$
\big(\beta(s+ f_j)\big)_{zz}
= \epsilon^2\Big[\beta''(s+ f_j) +  2\beta'f_j'  + \beta f_j''\Big]= \epsilon^2\Big[\frac{\beta''}{\beta} \bar{x}_j +  2\beta'f_j' + \beta f_j''\Big].
$$
The computations of all components in \eqref{S(u1)} can be done in the following way.

\medskip
\noindent {\textbf{(1).}}
The definition of $u_1$ will imply that the first term in \eqref{S(u1)} can be rewritten as follows
\begin{align}\label{uzz}
  u_{1,zz}\,=\,&\sum_{j=1}^N  H_{j,x_jx_j}  \big| \big(\beta(s-f_j)\big)_z\big|^2
  +\sum_{j=1}^NH_{j,x_j} \big(\beta(s-f_j)\big)_{zz}\nonumber
  \\
  &+  \sum_{j=1}^N  \bar{H}_{j, \bar{x}_j\bar{x}_j}  \big| \big(\beta(s+ f_j)\big)_z\big|^2
  +\sum_{j=1}^N\bar{H}_{j,\bar{x}_j} \big(\beta(s+ f_j)\big)_{zz}
  \\
=\,&\epsilon^2\sum_{j=1}^N  H_{j,x_jx_j}   \Big[\Big( \frac{\beta'}{\beta} \Big)^2 x_j^2 -2 \beta' f_j'x_j+\beta^2 (f_j')^2 \Big]
     +\epsilon^2\sum_{j=1}^NH_{j,x_j} \Big[\frac{\beta''}{\beta}x_j - 2\beta'f_j' -\beta f_j''\Big] \nonumber
     \\
  &  +  \epsilon^2\sum_{j=1}^N  \bar{H}_{j, \bar{x}_j\bar{x}_j}   \Big[\Big( \frac{\beta'}{\beta} \Big)^2 \bar{x}_j^2 + 2 \beta' f_j' \bar{x}_j+\beta^2 (f_j')^2 \Big]
     +\epsilon^2\sum_{j=1}^N\bar{H}_{j, \bar{x}_j} \Big[\frac{\beta''}{\beta} \bar{x}_j +  2\beta'f_j' + \beta f_j''\Big].  \nonumber
\end{align}

\medskip
\noindent {\textbf{(2).}}
 Recalling the expression of $B_3$ in \eqref{B3}, we obtain that
 \begin{equation}\label{B3u}
 \begin{split}
 B_3(u_1)=&-\big(\epsilon k+\epsilon^2 sk^2 \big)  u_{1,s}
 \\[1mm]
=&-\epsilon k\beta \sum_{j=1}^N H_{j,x_j}
-\epsilon^2s k^2  \beta \sum_{j=1}^N H_{j,x_j}           -\epsilon k\beta       \sum_{j=1}^N  \bar{H}_{j, \bar{x}_j}  -\epsilon^2s k^2  \beta \sum_{j=1}^N \bar{H}_{j, \bar{x}_j}
\\[1mm]
=&-\epsilon k\beta \sum_{j=1}^NH_{j,x_j}
-\epsilon^2 k^2 \sum_{j=1}^N x_jH_{j,x_j}
-\epsilon^2 k^2 \beta  \sum_{j=1}^N  f_jH_{j,x_j}
\\[1mm]
& -\epsilon k\beta \sum_{j=1}^N  \bar{H}_{j, \bar{x}_j}
-\epsilon^2 k^2   \sum_{j=1}^N  \bar{x}_j\bar{H}_{j, \bar{x}_j}
+\epsilon^2 k^2 \beta  \sum_{j=1}^N  f_j\bar{H}_{j, \bar{x}_j}.
 \end{split}
 \end{equation}

\medskip
\noindent {\textbf{(3).}}
 Recalling  the definition of $B_2$ as in \eqref{B2}, it is easy to derive that
 \begin{equation}
 	B_2(u_1)=o(\epsilon^{2+\sigma}).
 \end{equation}
This is due to the setting of $f_1,\cdots, f_N$ in \eqref{f2N}
and also the asymptotics of $H$ in \eqref{asymptoticofH}.

\medskip
\noindent {\textbf{(4).}}
 According to the expression of  $B_4$ as in \eqref{B4}, it is derived that
 \begin{align}\label{B4u}
 B_4(u_1)\,=\,&\Big[\,\epsilon s \beta_1+\frac{1}{2}\epsilon^2s^2\beta_2\, \Big]   F(u_1)     \nonumber
 \\
 \,=\,&\epsilon s\beta_1 \sum_{j=1 }^N   \big( F( H_j) + F(\bar{H}_j) \big)
+\epsilon s \beta_1 \Big[\, F(u_1) - \sum_{j=1}^N    \big( F( H_j) + F(\bar{H}_j) \big)  \, \Big]    \nonumber
 \\
 &+\frac{1}{2}\epsilon^2s^2\beta_2\, \sum_{j=1}^N \,  \big( F( H_j) + F(\bar{H}_j) \big)      \nonumber
\\
&  + \frac{1}{2}\epsilon^2s^2\beta_2
\Big[\, F(u_1) - \sum_{j=1}^N   \big( F( H_j) + F(\bar{H}_j) \big)   \, \Big]\nonumber
\\
 \,=\,&\epsilon \beta_1 \beta^{-1}\sum_{j=1}^N x_jF( H_j)
+\epsilon \beta_1\sum_{j=1}^N f_jF( H_j)
+ \epsilon s\beta_1 \sum_{j=1 }^N     F(\bar{H}_j)
\\
&+\frac{1}{2}\epsilon^2 \beta_2 \beta^{-2}  \sum_{j=1}^N x_j^2 F( H_j)
+ \epsilon^2\beta_2 \beta^{-1}  \sum_{j=1}^N x_j f_j F( H_j) \nonumber
\\
& +\frac{1}{2}\epsilon^2 \beta_2   \sum_{j=1}^N f_j^2 F( H_j)
+\frac{1}{2}\epsilon^2s^2\beta_2\sum_{j=1}^N   F(\bar{H}_j)   \nonumber
\\
&+ \epsilon \beta_1 s \Big[\, F(u_1) - \sum_{j=1}^N \big( F( H_j) + F(\bar{H}_j) \big)  \, \Big]    \nonumber
\\[1mm]
 &+\frac{1}{2} \epsilon^2\beta_2s^2
\Big[\, F(u_1) - \sum_{j=1}^N \big( F( H_j) + F(\bar{H}_j) \big)  \, \Big]. \nonumber
 \end{align}

\medskip
\noindent {\textbf{(5).}}
Now, we focus on the estimates of the nonlinear error terms
 \begin{equation}\label{nonlinearcomputations}
  F(u_1)\, -\,  \sum_{j=1}^N  \big[ F(H_j) +  F(\bar{H}_j)\big] .
 \end{equation}
We denote the following sets:
\begin{equation}\label{{mathfrak A}_1}
{\mathfrak A}_1=\left\{\, (s, z)\in \Big(0, \frac{\delta_0 }{\epsilon}\Big)\times
\Big[0, \frac{\ell }{\epsilon}\Big)
\,   :\,   0\, \leq
    s\, \leq\, \frac{  f_{1}(\epsilon z)+f_{2}(\epsilon z)  }{2}\, \right\},
\end{equation}
and for $n= 2,\cdots, N$
\begin{equation}\label{{mathfrak A}_n}
{\mathfrak A}_n=\left\{\, (s, z)\in \Big(0, \frac{\delta_0 }{\epsilon}\Big)\times
\Big[0, \frac{\ell }{\epsilon}\Big)
\,   :\,   \frac{  f_{n-1}(\epsilon z)+f_n(\epsilon z)  }{2}\, \leq\,
    s\, \leq\, \frac{  f_{n}(\epsilon z)+f_{n+1}(\epsilon z)  }{2}\, \right\}.
\end{equation}

\medskip
Let us fix $n$ with  $2\leq n \leq N-1$.
 Since
 \begin{equation*}
 H(x) =   \pm \big(1 - 2 e^{-\sqrt{2}|x|} \big)
 \, +\,O \big( e^{-2\sqrt{2}|x|}\big)
 \quad
 \hbox{as } x\to \pm \infty,
 \end{equation*}
there hold that
for $j  < n$,
\begin{equation}\label{r5}	
H\big(\beta(s-f_j)\big)-1
=\, -  2 e^{ -\sqrt{2}\beta( f_n  - f_j)} e^{ -\sqrt{2}   x_n}
\,+\,
O \big( e^{ - 2\sqrt{2}  |  x_n + \beta(f_n - f_j) |} \big) ,
\quad\forall\, (s,z)\in {\mathfrak A}_n,
\end{equation}
while for $j> n$,
\begin{equation}\label{r6}
H\big(\beta(s-f_j)\big)+1
=\, 2 e^{ -\sqrt{2} \beta ( f_j - f_n)} e^{ \sqrt{2}  x_n}
\,  +\,
O \big( e^{ - 2\sqrt{2}  |x_n+ \beta(f_n - f_j) |}  \big),
\quad\forall\, (s,z)\in {\mathfrak A}_n.
\end{equation}
We  derive that for any $j$ with $|j-n|\geq 3$
\begin{equation}
\big|F( H_j)\big|
\,\leq\,
C e^{ - \sqrt{2}  |x_n +\beta( f_n - f_{j}) |}
\,\leq\,
 \epsilon^{2+\sigma}\, e^{-  \sqrt 2   |x_n| },
 \quad\forall\, (s,z)\in {\mathfrak A}_n.
\label{r1}\end{equation}
for some $ \sigma > 0$ small, due to the fact
\begin{equation*}
|f_n - f_j| \,= \, \sum_{l =j}^{n-1} \, \frac{1}{\sqrt{2} \beta} \ln{\frac{1}{(N-l)\epsilon}}+O(1).
\end{equation*}
On the other hand, we have, for $(s,z)\in {\mathfrak A}_n$
\begin{align}
\begin{aligned}
F\big(H\big(\beta(s-f_{n-1})\big)\big)
&=F\big( H \big( x_n+\beta (f_n-f_{n-1})\big)\big)
\\[2mm]
&= F'( 1 ) \, {\mathbf b}_{1n} \ +\  \frac 12 F''( 1+  \lambda_1 {\mathbf b}_{1n}  ) \, {\mathbf b}_{1n}^2 ,
\label{r2}
\end{aligned}
\end{align}
\begin{align}
\begin{aligned}
F\big(H\big(\beta(s-f_{n-2})\big)\big)
&=F\big( H \big (x_n+\beta(f_n-f_{n-2})\big) \big)
\\[2mm]
&=   F'( 1 ) \, {\mathbf b}_{2n} \ +\ \frac 12 F''( 1+  \lambda_2 {\mathbf b}_{2n} ) \, {\mathbf b}_{2n}^2 ,
\label{r21}
\end{aligned}
\end{align}
\begin{align}
\begin{aligned}
F\big(H\big(\beta(s-f_{n+1})\big)\big)
&=F\big( H \big( x_n + \beta(f_n - f_{n +1})\big)\big)
\\[2mm]
&=   F'( 1)\, {\mathbf b}_{3n} \ +\ \,\frac 12 F''( 1 -  \lambda_3 {\mathbf b}_{3n}  ) \, {\mathbf b}_{3n}^2,
\label{r3}
\end{aligned}
\end{align}
\begin{align}
\begin{aligned}
F\big(H\big(\beta(s-f_{n+2})\big)\big)
&=F\big( H\big( x_n+ \beta (f_n - f_{n +2})\big) \big)
\\[2mm]
&=   F'( 1)\, {\mathbf b}_{4n} \ +\ \,\frac 12 F''( 1 -  \lambda_4 {\mathbf b}_{4n}  ) \, {\mathbf b}_{4n}^2,
\label{r31}
\end{aligned}
\end{align}
for certain constants $\lambda_1,\cdots, \lambda_4\in (0, 1)$.
Here we have denoted
\begin{align*}
{\mathbf b}_{1n} &:=  H \big( x_n+\beta (f_n-f_{n-1})\big) -1,
\qquad
{\mathbf b}_{2n} :=  H \big( x_n+\beta (f_n-f_{n-2})\big) -1,
\\[1mm]
{\mathbf b}_{3n} &:=  H \big(  x_n+ \beta(f_n - f_{n +1}) \big)+1,
\qquad
{\mathbf b}_{4n} :=  H \big(  x_n+ \beta(f_n - f_{n +2}) \big) +1.
\end{align*}
For convenience purpose, we will denote that
\begin{align}
{\mathbf A}_{ 1 n} &:=  F''( 1+  \lambda_1 {\mathbf b}_{1n}  ),
\qquad
{\mathbf A}_{2 n}  :=  F''( 1+  \lambda_2 {\mathbf b}_{2n} ),\label{A1A2}
\\[1mm]
{\mathbf A}_{3n} &:=  F''( 1-  \lambda_3 {\mathbf b}_{3n} ),
\qquad
{\mathbf A}_{4n} :=  F''( 1-  \lambda_4 {\mathbf b}_{4n} ).\label{A3A4}
\end{align}

\medskip

\medskip
 Note that, for $(s,z) \in  {\mathfrak A}_n$ with $n=1,\cdots,N$
\begin{equation}\label{expressionofu1}
(-1)^{n} u_1 =   H\big(\beta(s-f_n)\big)   - {\mathbf b}_{1n} + {\mathbf b}_{2n} - {\mathbf b}_{3n} +{\mathbf b}_{4n}  +O(\epsilon^{2+\sigma}).
\end{equation}
According to the asymptotic of these terms given in (\ref{r5})-(\ref{r6}), we obtain
\begin{equation}\label{b1+b3}
\begin{split}
&{\mathbf b}_{1n}\,-\,{\mathbf b}_{2n}\,+\,{\mathbf b}_{3n}\,-\,{\mathbf b}_{4n}
  \\[1mm]
\,=&\,-2 \epsilon(N-n+1) e^{- \sqrt 2 x_n} e^{-  \sqrt 2 \beta (\mathfrak{f}_{n}-\mathfrak{f}_{n-1})}
\,+\, O \big( e^{- 2\sqrt 2 |x_n+\beta(f_n-f_{n-1})|} \big)
 \\[1mm]
  &\, + \, 2 \epsilon(N-n) \,  e^{ \sqrt 2  \,x_n} e^{- \sqrt 2 \,\beta  \,(\mathfrak{f}_{n+1}-\mathfrak{f}_n)} \,+\, O\big(e^{- 2\sqrt 2  \,|x_n+\,\beta(\,f_n-f_{n-1})|})
\\[1mm]
  &\, +\, 2 \epsilon^2(N-n+1) (N-n+2) e^{- \sqrt 2 x_n} e^{-  \sqrt 2 \beta(\mathfrak{f}_{n}-\mathfrak{f}_{n-2})} \,+\, O\big(e^{- 2\sqrt 2 |x_n+\beta(f_n-f_{n-2})|}\big)
  \\[1mm]
  &\, -\, 2  \epsilon^2 (N-n) (N-n-1) e^{ \sqrt 2  x_n} e^{-  \sqrt 2 \beta (\mathfrak{f}_{n+2}-\mathfrak{f}_{n})} \,+\, O\big(e^{- 2\sqrt 2 |x_n+\beta(f_n-f_{n+2})|}\big).
\end{split}
\end{equation}
Since we have set the notation $f_0=-f_1, f_{-1}=-f_2$ in \eqref{f0fminus1},
\eqref{b1+b3} also holds for $n=1, 2, N-1, N$.
For more details, we refer the readers to the computations in Appendix \ref{appendixA}.
Thus for some $\lambda_5\in (0, 1)$,
\begin{equation}\label{r4}
\begin{split}
(-1)^{n}F(u_1)
\,=\,&
F\big((-1)^{n}u_1\big)
\\[1mm]
\,=\,&
F\Big(H\big(\beta(s-f_n)\big)\Big)
\,-\, F'\Big( H\big(\beta(s-f_n)\big)\Big)\, \big[{\mathbf b}_{1n}-{\mathbf b}_{2n}+{\mathbf b}_{3n}-{\mathbf b}_{4n}\big]
\\[1mm]
&\,+\,
 \frac{1}{2}  {\mathbf A}_{5n}\big[{\mathbf b}_{1n}-{\mathbf b}_{2n}+{\mathbf b}_{3n}-{\mathbf b}_{4n}\big]^2+ O(\epsilon^{2+\sigma}),
\end{split}
\end{equation}
where
\begin{equation}\label{A5}
{\mathbf A}_{5n} := F''\Big( H\big(\beta(s-f_n)\big)- \lambda_5\big({\mathbf b}_{1n}-{\mathbf b}_{2n}+{\mathbf b}_{3n}-{\mathbf b}_{4n}\big)\Big).
\end{equation}

\medskip
Combining relations (\ref{r1})-(\ref{r4}) and using the facts
$$
F'(1)-F'(H) \, = \,-3\big(1-H^2\big),
\qquad
|{\mathbf b}_{1n}|+|{\mathbf b}_{2n}|+|{\mathbf b}_{3n}|+|{\mathbf b}_{4n}|
= O(\epsilon \, e^{- \sqrt 2 \, |x_n|}), $$
we obtain, for $(s,z)\in {\mathfrak A}_n$ with $n=1,\cdots,N$
\begin{equation}
\begin{split}
&(-1)^{n} \Big[\, F(u_1) - \sum_{j=1}^N \big( F(H_j) +  F(\bar{H}_j) \big) \, \Big]
\\
&\,=\,-3(1-H_n^2)\,({\mathbf b}_{1n} -{\mathbf b}_{2n}+{\mathbf b}_{3n}- {\mathbf b}_{4n})
\,+\,\frac 12 {\mathbf A}_{1n} \, {\mathbf b}_{1n}^2\,-\,\frac 12 {\mathbf A}_{2n} \, {\mathbf b}_{2n}^2
\\[1mm]
&\quad
\,+\,\frac 12 {\mathbf A}_{3n}\, {\mathbf b}_{3n}^2
\,-\,\frac 12{\mathbf A}_{4n} \, {\mathbf b}_{4n}^2\,+\,
 \frac{1}{2} {\mathbf A}_{5n}  \big[{\mathbf b}_{1n}-{\mathbf b}_{2n}+{\mathbf b}_{3n}-{\mathbf b}_{4n}\big]^2\, + \,
O\big(  \epsilon^{2+\sigma} e^{-\sigma|x_n|} \big),
\end{split}
\end{equation}
where the expressions  of ${\mathbf b}_{1n} -{\mathbf b}_{2n}+{\mathbf b}_{3n}- {\mathbf b}_{4n}$ are given in \eqref{b1+b3},
where ${\mathbf A}_{1n},\cdots, {\mathbf A}_{4n}$ are defined in \eqref{A1A2}-\eqref{A3A4}.

\medskip
We now make a conclusion for the computations of terms in \eqref{nonlinearcomputations}.
For fixed $n$ with $2\,\leq \,n\leq \, N$, let the sets $\mathfrak{B}_n$ be in the form
\begin{equation*}
{\mathfrak B}_n=\left\{\, (s, z)\in \Big(0, \frac{\delta_0 }{\epsilon}\Big)\times
\Big[0, \frac{\ell }{\epsilon}\Big)
\,   :\,   \frac{  f_{n-1}(\epsilon z)+f_n(\epsilon z)  }{2}\,-{\tilde\delta}\, \leq\,
    s\, \leq\, \frac{  f_{n}(\epsilon z)+f_{n+1}(\epsilon z)  }{2}\,+{\tilde\delta}\, \right\},
\end{equation*}
and
\begin{equation*}
{\mathfrak B}_1=\left\{\, (s, z)\in \Big(-{\tilde\delta}, \frac{\delta_0 }{\epsilon}\Big)\times
\Big[0, \frac{\ell }{\epsilon}\Big)
\,   :\,    -{\tilde\delta}  \, \leq\,
    s\, \leq\, \frac{ f_{1}(\epsilon z)+f_{2}(\epsilon z)  }{2}\,+{\tilde\delta}\, \right\},
\end{equation*}
with constant ${\tilde\delta}>0$ sufficient small.
We denote a smooth partition of unity subordinate  $\{\chi_n\}$ to the open sets $\{{\mathfrak B}_n\,:\, n=1,\cdots, N\}$, satisfy
\begin{equation}\label{cutoffchi}
\begin{split}
&\chi_n(s,z)\, = \, 1\quad\mbox{when} \quad (s,z) \in {\mathfrak A}_n,
\qquad
0\,\leq \,\chi_n(s,z)\, \leq \, 1,
\\
&\chi_n(s,z)\in C_0^{\infty}({\mathfrak B}_n)
\qquad \text{and} \qquad
\sum_{n=1}^N\chi_n(s,z)\,=\, 1\quad \mbox{on }   {\mathfrak S}.
\end{split}
\end{equation}
Combining  all the terms in the above, we know that in the whole strip ${\mathfrak S}$
\begin{equation}\label{expressnon}
\begin{split}
&F(u_1) - \sum_{j=1}^N \big[ F(H_j) +  F(\bar{H}_j)\big]  \, = \,  \sum_{n=1}^N \,  \chi_n \, \Big[\, F(u_1) - \sum_{j=1}^N \big( F(H_j) +  F(\bar{H}_j) \big) \, \Big]
\\
&\,=\sum_{n=1}^N(-1)^n  \, \chi_n \, \Big\{\,-3(1-H_n^2)\,({\mathbf b}_{1n} -{\mathbf b}_{2n}+{\mathbf b}_{3n}- {\mathbf b}_{4n})
\,+\,\frac 12 {\mathbf A}_{1n} \, {\mathbf b}_{1n}^2\,-\,\frac 12 {\mathbf A}_{2n} \, {\mathbf b}_{2n}^2
\\[1mm]
&\qquad\qquad\qquad \quad
+\frac 12 {\mathbf A}_{3n} {\mathbf b}_{3n}^2
-\frac 12{\mathbf A}_{4n} {\mathbf b}_{4n}^2
+\frac{1}{2} {\mathbf A}_{5n}  \big[{\mathbf b}_{1n}-{\mathbf b}_{2n}+{\mathbf b}_{3n}-{\mathbf b}_{4n}\big]^2
 \,\Big\} +
O (  \epsilon^{2+\sigma} ).
\end{split}
\end{equation}
Note that \eqref{expressnon} will give the estimates of  $\beta^2 F(u_1)- \beta^2 \sum_{j=1}^N  \Big[ F(H_j)+F(\bar{H}_j)  \Big]$ in \eqref{S(u1)} and also the estimates for the corresponding components in \eqref{B4u}.

\subsubsection{Rearrangements of the error terms}

In  this part, by substituting all the above computations into \eqref{S(u1)},
 we will rearrange the error components in ${\bf S} (u_1)$ according to the order of $\epsilon$.

\medskip
According to the asymptotic behavior  of $f_j, \,j=1,\cdots, N $ as in \eqref{f01}-\eqref{f2N}, we know that the largest terms in ${\bf S} (u_1)$ are of order  $\epsilon |\ln \epsilon|$, which can be written as the following
\begin{equation*}
 \epsilon |\ln\epsilon| E_{01} \, = \,  \epsilon \, \beta_1 \, \sum_{j=1}^N \, { f}_j \, \Xi_{0,j},
\end{equation*}
where we have denoted
\begin{equation}\label{xi1j}
\Xi_{0,j} \, =\,   F( H_j)    \, =\,    H_j - H_j^3,\qquad j=1,\cdots, N.
\end{equation}

\medskip
Recalling the expression of ${\mathbf b}_{1j} $ and ${\mathbf b}_{3j}$ as in \eqref{b1} and \eqref{b3}, we can select the terms of order $\epsilon$ in ${\bf S}(u_1)$, which can be defined by
\begin{equation*}
\epsilon \sum_{j=1}^N \Xi_{1,j} = \epsilon \sum_{j=1}^N \Big[- k\beta H_{j,x_j}
+\, \beta^{-1}\, \beta_1  \,x_j   F( H_j)\Big],
\end{equation*}
and
\begin{equation*}
\epsilon E_{02}  \,=\,-\,3 \beta^2  \sum_{j=1}^N (-1)^{j} \chi_j  (1-H_j^2)\,({\mathbf b}_{1j} +{\mathbf b}_{3j} ).
\end{equation*}

The terms of order $\epsilon^2 |\ln\epsilon|^2 $ in ${\bf S}(u_1)$ can be defined as
\begin{equation}\label{E03}
 \epsilon^2 |\ln\epsilon|^2 E_{03}  = \epsilon^2 \beta^2  \sum_{j=1}^N \,  {f'_j }^2H_{j,x_jx_j}    + \frac 12 \epsilon^2 \beta_2    \sum_{j=1}^N  f_j^2 F(H_j).
\end{equation}
This is due to the definitions of $f_1,\cdots, f_N$ in \eqref{f2N}.

\medskip
We can also select the $\epsilon^2 |\ln \epsilon|$ terms in ${\bf S}(u_1)$, which can be expressed as
\begin{equation}\label{E04}
\begin{split}
\epsilon^2 |\ln\epsilon| \, E_{04}
=\,&
\epsilon^2 \, \sum_{j=1}^N \Big[  \, (  -2 \beta' f'_j - \beta f''_j  )  \, H_{j,x_j}\, -  2\, \beta' \, f'_j\, x_j   \, H_{j,x_jx_j}  \Big]
\\
&  + \epsilon^2  \beta_2 \beta^{-1}  \sum_{j=1}^N   \,f_jx_j F(H_j)-\epsilon^2  k^2  \beta   \sum_{j=1}^N   f_jH_{j,x_j},
\end{split}
\end{equation}
 and
\begin{equation}\label{E05}
\epsilon^2 |\ln \epsilon|\sum_{j=1}^N \Xi_{2,j}
\,=\, - \epsilon  k\beta \sum_{j=1}^N  \bar{H}_{j, \bar{x}_j}
-\epsilon \beta_1 s\sum_{j=1 }^N  F(\bar{H}_j).
\end{equation}

For terms of order $\epsilon^2$, we set
\begin{equation}\label{E06}
\begin{split}
\epsilon^2 E_{06}
\,=\, &\epsilon^2  \sum_{j=1}^N \Big[\Big(  \frac{{\beta'} }{\beta} \Big)^2 x_j^2H_{j,x_jx_j}+\frac{\beta'' }{\beta}x_jH_{j,x_j} \Big]
\\
&+ \frac{1}{ 2 }  \epsilon^2\beta_2 \beta^{-2} \sum_{j=1}^N  x_j^2 F(H_j)-\epsilon^2  k^2   \sum_{j=1}^N x_j H_{j,x_j},
\end{split}
\end{equation}
and also
\begin{align}\label{E032}
  \, E_{07}
\,&=\,\beta^2 \sum_{j=1}^N (-1)^{j} \chi_j \Big\{3(1-H_j^2)({\mathbf b}_{2j}+ {\mathbf b}_{4j})
+\frac 12 {\mathbf A}_{1j}  {\mathbf b}_{1j}^2 \nonumber
\\
& \qquad \qquad \qquad \qquad+\frac 12 {\mathbf A}_{3j}\, {\mathbf b}_{3j}^2
+ \frac{1}{2} {\mathbf A}_{5j}\big[{\mathbf b}_{1j}+{\mathbf b}_{3j}\big]^2   \Big\}
\\
&\quad    \,+\,\epsilon \beta_1\beta^2  \sum_{j=1}^N   (-1)^{j} \chi_j \Big( \frac{x_j }{\beta} + { f}_j\Big)\Big[-3(1-H_j^2)\,({\mathbf b}_{1j} +{\mathbf b}_{3j})  \Big].\nonumber
\end{align}

From the definition of $\bar{x}_j=\beta (s+f_j)$ and the asymptotic of $H$ as in \eqref{asymptoticofH}, it is  easy to derive that
\begin{align*}
E_{08}\,=\,&\epsilon^2\sum_{j=1}^N  H_{j, \bar{x}_j\bar{x}_j}   \Big[\Big( \frac{\beta'}{\beta} \Big)^2 \bar{x}_j^2 + 2 \beta' f_j' \bar{x}_j+\beta^2 (f_j')^2 \Big]
+\epsilon^2\sum_{j=1}^N\bar{H}_{j, \bar{x}_j} \Big[\frac{\beta''}{\beta} \bar{x}_j +  2\beta'f_j' + \beta f_j''\Big]
\\
&-\epsilon^2 k^2   \sum_{j=1}^N  \bar{x}_j\bar{H}_{j, \bar{x}_j}
+\epsilon^2 k^2 \beta  \sum_{j=1}^N  f_j\bar{H}_{j, \bar{x}_j}
+\frac{1}{2}\epsilon^2s^2\beta_2\sum_{j=1}^N   F(\bar{H}_j)
\\
&+\frac{1}{2}\epsilon^2\beta_2s^2\Big[F(u_1) - \sum_{j=1}^N \big[F( H_j)+ F(\bar{H}_j)\big]\Big]
+B_2(u_1)
\\
\,=\,&O(\epsilon^{2+\sigma}).
\end{align*}

Combining the above expressions of components for ${\bf S} (u_1)$, we can rewrite the error for $(s, z)\in {\mathfrak S}$ in the form
\begin{equation}\label{S(u12)}
\begin{split}
{\bf S} (u_1) \, =&\,   \epsilon \, \beta_1 \, \sum_{j=1}^N \,\big( \dot{f}_j+  \mathfrak{ f}_j\big)  \, \Xi_{0,j} + \epsilon \sum_{j=1}^N \Xi_{1,j}   +\epsilon E_{02}+ \epsilon^2 |\ln\epsilon|^2\, E_{03} \nonumber
\\
\quad & +\epsilon^2 |\ln\epsilon|  \, E_{04}+ \epsilon^2 |\ln\epsilon| \sum_{j=1}^{N}\Xi_{2,j}+\epsilon^2 E_{06} +  E_{07} +    O(\epsilon^{2+\sigma}).
\end{split}
\end{equation}
By the symmetry, we have the  boundary error of $u_1$
\begin{equation}\label{boundary1}
\begin{split}
 {\mathcal{D}}(u_1) &= -\frac{\partial u_1}{\partial s}\big|_{s=0} = -\beta \sum_{j=1}^N   \, \big[  H_{j,x_j}    + \bar{H}_{j, \bar{x}_j}  \big]  \big|_{s=0}
 \\
 &=-\beta \sum_{j=1}^N   \,(-1)^{j} \big[  H'(\beta(s-f_j))   - H'(\beta(s+f_j)) \big]  \big|_{s=0}=0.
\end{split}
\end{equation}

\subsubsection{More precise estimates of the error ${\bf S}(u_1)$}
For more precise estimates of the error terms, we will set
\begin{equation}\label{mathfrakfj}
\mathfrak{f}_j ( \theta) ={\bar f}_j(\theta) +\hat{f}_j( \theta),
\quad
\forall\, j=1,\cdots,N, \qquad 	
\end{equation}
and \begin{equation*}
 {\bar f}_0(\theta)= - {\bar f}_1(\theta), \quad \hat{f}_0( \theta)=  -\hat{f}_1( \theta).
\end{equation*}
Here ${\bar f}_1,\cdots, {\bar f}_N$  are of order $O(1)$  and  will be determined by solving system \eqref{berf},
see \eqref{barfexpression1}-\eqref{barfexpressionN}. On the other hand,
$\hat{f}_j$'s will be chosen  with the constraints (c.f. \eqref{hatfj}-\eqref{ddotfnorm})
\begin{equation}\label{epsilon12}
 \|{\hat f}_j\|_\infty\leq C\epsilon^{1/2},
\quad
\forall\, j=1,\cdots,N.
\end{equation}

\medskip
Recalling the expressions of ${\mathbf b}_{1,j}, {\mathbf b}_{3,j}$ as in \eqref{b1} and \eqref{b3}, we can  rewrite $ \epsilon E_{02} $  as following
\begin{align*}
\epsilon E_{02}  \,=\,&-\,3 \beta^2  \sum_{j=1}^N (-1)^{j} \chi_j  (1-H_j^2)\,({\mathbf b}_{1j} +{\mathbf b}_{3j} )
\,=\, \epsilon E_{02}^1+ \epsilon^2 E_{02}^2,
\end{align*}{}
where
\begin{align*}
 \epsilon E_{02}^1
\, :=\, -  6\sqrt 2 \epsilon \,\beta^2\, \sum_{j=1}^N\, { \chi_j}  H_{j,x_j} \, \Big[&- (N-j+1) \,  e^{- \sqrt 2 \,x_j } e^{\,-  \sqrt 2 \,\beta  \,(\mathfrak{f}_{j}-\mathfrak{f}_{j-1})\, }
\\
& +  (N-j) \,  e^{ \sqrt 2  \,x_j} e^{\,-  \sqrt 2 \,\beta  \,(\mathfrak{f}_{j+1}-\mathfrak{f}_j)\,}  \Big],
\end{align*}
and
\begin{align*}
\epsilon^2 E_{02}^2:
 = - 6 \sqrt 2 \epsilon^2  \beta^2  	\sum_{j=1}^N   \,\chi_j  H_{j,x_j} \,\Big[ &  (N-j+1)^2  e^{- 2\sqrt 2 \,x_j } e^{\,-  2\sqrt 2 \,\beta  \,( \mathfrak{f}_{j}- \mathfrak{f}_{j-1})\, }
\\
&-  (N-j )^2 e^{ 2\sqrt 2  \,x_j} e^{\,-  2\sqrt 2 \,\beta  \,( \mathfrak{f}_{j+1}- \mathfrak{f}_j)\,} \Big]   + O( \,  \epsilon^{2+\sigma}  \,   ).
\end{align*}


\medskip
We now decompose $ \epsilon E_{02}^1$.
For $j=1, \cdots, N$, let
\begin{align}
\label{ga0n}
\gamma_{0,j}\,:= &\,\int_{\R}\,\chi_{j}\,H_{j,x_j}^2\, {\mathrm{d}}x_j,
\\
\label{ga1n}
\gamma_{1,j}\,:=&\, \int_{\R}\,\chi_{j}\,H_{j,x_j}^2\, e^{\,-\sqrt{2}x_j\,}\, {\mathrm{d}}x_j,
\\
\label{ga2n}
\gamma_{2,j}\,:=&\, \int_{\R}\,\chi_{j}\,H_{j,x_j}^2\, e^{\,\sqrt{2}\, x_j\,}\, {\mathrm{d}}x_j.
\end{align}
It is easy to derive the following relations, for all $j=1, \cdots, N$,
\begin{equation}\label{relationofgamma_ingamma_i}
\gamma_{0,j}\,=\, \gamma_0\,+\,O(\,\epsilon\, ),
\qquad
\gamma_{i,j}\,=\,\gamma_1\,+\,O\big( \sqrt{\epsilon}\big), \quad i=1,2,
\end{equation}
where $ \gamma_0, \gamma_1$ are defined  as in  \eqref{gamma01}.
For $j=1, \cdots, N+1$, we set the functions
 \begin{equation}\label{mathbfdn}
{\mathbf d}_j(\theta)\,\,:=\, \left\{
\begin{array}{cr}
  \, (N-j+1)\, e^{-\sqrt{2}\, \beta(\theta)\, \big({\bar f}_{j}(\theta)-{\bar f}_{j-1}(\theta)\big)},  \quad &\text{when $  j=1, \cdots, N,  $ }
\\[1mm]
 0,  \, \quad &\text{  when $ j \, = \,   N+1$. }
\end{array}
\right.
\end{equation}
Therefore, by recalling $\mathfrak{f}_j$'s in \eqref{mathfrakfj}, we can  further decompose   $ \, \epsilon \, E_{02}^1$ as the following
\begin{align*}
\epsilon E_{02}^1
\,&=\,  - 6 \sqrt 2 \epsilon  \beta^2 \sum_{j=1}^N\, \chi_{j}  H_{j,x_j} \Big[-  e^{- \sqrt 2 x_j} {\mathbf d}_{j}  e^{-\sqrt{2} \beta ({\hat f}_{j}-{\hat f}_{j-1})}
+    e^{ \sqrt 2 x_j} {\mathbf d}_{j+1} e^{-  \sqrt 2 \beta (\hat{f}_{j+1}-\hat{f}_j)}  \Big]
\\[1mm]
&\,=\,\epsilon \, \sum_{j=1}^N\, \chi _{j}\Xi_{3,j}
\,+\,
\epsilon \, \sum_{j=1}^N\, \chi_{j}\Xi_{4,j}
\,+\,
\epsilon \, \sum_{j=1}^N\, \chi_{j}\Xi_{5,j}.
\end{align*}
In the above, we have denoted
\begin{align}
\begin{aligned}\label{Xi4}
 \Xi_{3,j}(x_j,z)
:=&
 -  \, 6\sqrt{2}  \beta^2 H_{j,x_j}
\Big[{- \mathbf d}_{j } e^{-\sqrt{2}\beta ({\hat f}_{j}-{\hat f}_{j-1})} e^{ -\sqrt{2} x_j }
 + {\mathbf d}_{j+1} e^{-  \sqrt 2 \beta (\hat{f}_{j+1}-\hat{f}_j)} e^{ \sqrt{2} x_j } \Big]
\\[2mm]
&+
\, 6\sqrt{2} \beta^2 H_{j,x_j}
\Big[- \frac{ \gamma_{1,j} }{\gamma_{0,j}} {\mathbf d}_{j}e^{-\sqrt{2} \beta ({\hat f}_{j}-{\hat f}_{j-1})}
 +\frac{ \gamma_{2,j} }{\gamma_{0,j}}{\mathbf d}_{j+1} e^{-  \sqrt 2 \beta (\hat{f}_{j+1}-\hat{f}_j)}  \Big],
\end{aligned}
\end{align}
in such a way that
\begin{equation}\label{orthorgnalityXi5n}
\int_{\R}\,{\chi_{j} \,\Xi_{3,j}(x_j, z) H_{j,x_j}}\,{\mathrm{d}}x_j\,=\,0,
\end{equation}
due to the definitions of $\gamma_{0,j}$, $\gamma_{1,j}$, $\gamma_{2,j}$ in (\ref{ga0n}), (\ref{ga1n}), \eqref{ga2n}.
Similarly, we have   also  set
\begin{align}
-6\sqrt{2} \beta^2 H_{j,x_j}
\Big[-\frac{ \gamma_{1,j} }{\gamma_{0,j}} {\mathbf d}_{j}e^{-\sqrt 2\beta(\hat{f}_{j}-\hat{f}_{j-1})}
 +  \frac{ \gamma_{2,j} }{\gamma_{0,j}} {\mathbf d}_{j+1} e^{-\sqrt 2\beta(\hat{f}_{j+1}-\hat{f}_j)}  \Big]\nonumber
\,=\, \Xi_{4,j}+ \Xi_{5,j},
\end{align}
with
\begin{align}
 \Xi_{4,j}(x_j, z) :=-\frac{1}{\gamma_{0,j}}\, 6\sqrt{2}  \, \beta^2 \, H_{j,x_j}\,\big( - \gamma_{1,j} {\mathbf d}_{j}
 \,+\, \gamma_{2,j}{\mathbf d}_{j+1} \big),
 \end{align}
\begin{equation}\label{Xi7n}	
\begin{split}
\, \Xi_{5,j}(x_j, z) :=
-\frac{ 1}{\gamma_{0,j}}\, 6\sqrt{2} \beta^2 \,H_{j,x_j}
 \Big[ & - \gamma_{1,j}\, {\mathbf d}_{j}\,  \big(\,e^{\,-  \sqrt 2 \,\beta  \,(\hat{f}_{j}-\hat{f}_{j-1})\,}-1 \, \big)
 \\[1mm]
 \,&+ \, \gamma_{2,j}\, {\mathbf d}_{j+1} \, \big(  e^{\,-  \sqrt 2 \,\beta  \,(\hat{f}_{j+1}-\hat{f}_j)\,}  -1 \big) \Big].
 \end{split}
\end{equation}

On the other hand, we can also rewrite $ \epsilon^2 E_{02}^2$ as follows
\begin{align}\label{Xi6j}
 \epsilon^2 E_{02}^2 = & - 6 \sqrt 2\epsilon^2\beta^2  	 \sum_{j=1}^N   \,{ \chi_j}  H_{j,x_j} \,\Big[    {\mathbf d}_j^2  e^{- 2\sqrt 2 \,x_j } e^{\,-  2\sqrt 2 \,\beta  \,( \hat{f}_{j}- \hat{f}_{j-1})\, } \nonumber
\\[1mm]
&\qquad \qquad \qquad \qquad \qquad \quad
 - {\mathbf d}_{j+1}^2 e^{ 2\sqrt 2  \,x_j} e^{\,-  2\sqrt 2 \,\beta  \,( \hat{f}_{j+1}- \hat{f}_j)\,} \Big]  + O( \,  \epsilon^{2+\sigma}  \, )
\\[1mm]
 := \,&   \epsilon^2 |\ln \epsilon| \sum_{j=1}^N {\chi_j}  \Xi_{6,j} +     O( \,  \epsilon^{2+\sigma}  \,  ).\nonumber
 \end{align}

\medskip
From the above expression of components, we see that
\begin{equation}\label{S(u13)}
\begin{split}
{\bf S} (u_1) \, =&\,   \epsilon \, \beta_1 \, \sum_{j=1}^N \, ({\dot f}_j+{\bar f}_j+{\hat f}_j) \, \Xi_{0,j} + \epsilon \sum_{j=1}^N \Xi_{1,j}+\epsilon \sum_{j=1}^N  \chi_{j}\Xi_{3,j}
\\[1mm]
\quad &+\epsilon \sum_{j=1}^N {\chi_j}  \Xi_{4,j}
+\epsilon \sum_{j=1}^N {\chi_j} \Xi_{5,j}
+ \epsilon^2 |\ln\epsilon| \sum_{j=1}^{N}\Xi_{2,j}
+  \epsilon^2 |\ln \epsilon| \sum_{j=1}^N {\chi_j} \Xi_{6,j}
\\[2mm]
\quad & + \epsilon^2 |\ln\epsilon|^2\, E_{03}+\epsilon^2 |\ln\epsilon|  \, E_{04}
+\epsilon^2 E_{06} +   E_{07} +    O(\epsilon^{2+\sigma}).
\end{split}
\end{equation}

\subsection{The second approximation}
By adding correction terms, we now want to construct a further approximation to a solution which
eliminates the terms of orders $O(\epsilon|\ln \epsilon|)$ and  $O(\epsilon)$, and then compute the new error.

\subsubsection{Choosing correction terms}
To cancel the first term   $\epsilon \, \beta_1 \, \sum_{j=1}^N \dot{  f}_j \, \Xi_{0,j}$, we can choose the  correction term  as
\begin{equation}\label{phi11}
\begin{split}
\epsilon |\ln \epsilon|   \phi_{11}(s,z)
:\,=\,& \epsilon \frac{\beta_1}{\beta^2} \sum_{j=1}^N\dot{ f}_j(\epsilon z) \psi_j ( x_j)
+\epsilon \frac{\beta_1}{\beta^2}  \sum_{j=1}^N \dot {f}_j(\epsilon z)   \psi_j  (-  x_j-2\beta f_j  )
\\
\,=\, &\epsilon \frac{\beta_1}{\beta^2} \sum_{j=1}^N\dot{ f}_j(\epsilon z) \psi_j (\beta(s-f_j))
+\epsilon \frac{\beta_1}{\beta^2}  \sum_{j=1}^N \dot {f}_j(\epsilon z)   \psi_j  (-  \beta(s+f_j) ),
\end{split}
\end{equation}
where  $ \psi_j (x_j)=  (-1)^j \, \psi ( x_j )$ and   the function $\psi (x) $  is the unique,  odd solution  to the problem
\begin{equation} \label{equationphi11}
 \, \psi_{ xx}  +(1-3\, H ^2)\, \psi \,  = -\,( H- H^3)  \quad \text{in }  \mathbb{R},
\qquad
 \psi (\pm \infty )\,=\,0,
\qquad
\int_{{\mathbb R}}\,\psi(x)   H_{x } \,{\mathrm{d}}x\,=\,0.
\end{equation}
Indeed, it is easy to see that
 \begin{equation*}
 \psi (x) = \frac 12 x H_x.
\end{equation*}
Since the Neumann boundary condition  is required, we have added the  terms of   $   \psi_j  (-  \beta(s+f_j) )$  in \eqref{phi11} so that
\begin{equation*}
\epsilon |\ln \epsilon|   \, \frac{\partial  \phi_{11}(s,z)}{\partial  s } \Big|_{s=0} \, =\,  0.
\end{equation*}

\medskip
In order to cancel the term $\epsilon \chi_j\Xi_{3,j}$,  we will choose
\begin{equation}\label{varepsilonpsi*}
\begin{split}
\epsilon\,\phi_{12}(s,z)
\, :=\,&
\epsilon\, \sum_{j=1}^N \,  \psi^{*}_j(x_j, z)
+ \epsilon  \sum_{j=1}^N \psi^{*}_j(- x_j- 2\beta f_j, z)
\\
=\,&\epsilon\, \sum_{j=1}^N \,  \psi^{*}_j(\beta(s-f_j), z)
+ \epsilon  \sum_{j=1}^N \psi^{*}_j(- \beta(s+f_j), z),
\end{split}	
\end{equation}
where the function $\psi^{*}_j(x_j, z)$ is a unique solution to the following problem
\begin{equation}\label{definitionofpsij}
\psi^{*}_{j,zz}+ \beta^{2}\,\Big[\psi^{*}_{j,x_jx_j}\, +\, {F'}(H_j)\psi^{*}_j\Big]\,=\,-  \chi_{j}\Xi_{3,j}(x_j, z)\quad \mbox{ in } \mathbb{R} \times \Big[0,\frac{\ell}{\epsilon} \Big),
\end{equation}
\begin{equation}\label{condition1}
\psi^{*}_j(x_j, 0)=\psi^{*}_j\Big(x_j, \frac{\ell}{\epsilon}\Big),
\quad
\psi^{*}_{j,z}(x_j, 0)=\psi^{*}_{j,z}\Big(x_j, \frac{\ell}{\epsilon}\Big),
\end{equation}
\begin{equation}\label{condition2}
\int_\mathbb{R}{\psi^{*}_j H_{j, x_j}}\,{\mathrm{d}}x_j=0,\quad 0<z<\frac{\ell}{\epsilon}.
\end{equation}
 As a direct consequence of Lemma \ref{lemma of solve linearized pro11} and (\ref{orthorgnalityXi5n}),
 we know that there exists  a unique solution $\psi_j^{*}(x_j, z)$  to the problem \eqref{definitionofpsij}-\eqref{condition2}.
Moreover, we can get
\begin{align*}
\epsilon   \, \frac{\partial  \phi_{12}(s,z)}{\partial  s } \Big|_{s=0} \, =\,  0.
\end{align*}

 \medskip
  On the other hand, if the terms
\begin{equation*}
\epsilon \beta_1 \bar{f}_j \Xi_{0,j}+ \epsilon\,\Xi_{1,j}
\,+\,\epsilon\,\chi_{j}\Xi_{4,j}, \quad\forall\,  j=1,\cdots,N,
\end{equation*}
can be cancelled in a similar way, we shall choose ${\bar f}_j$'s such that
\begin{align}\label{eqaution1}
\int_\mathbb{R}\Big[\,\epsilon \beta_1 \bar{f}_j \Xi_{0,j}+
\epsilon\,\Xi_{1,j}
\,+\,\epsilon\,\chi_{j}\Xi_{4,j}\,\Big]
H_{j,x_j}\,{\mathrm{d}}x_j\,=\,0,
\quad\forall\, j=1,\cdots,N.
\end{align}
In fact, the equalities in \eqref{eqaution1}  are equivalent to
\begin{equation}\label{berf}
\begin{split}
\frac{2\sqrt 2}{3}k -\frac{\sqrt 2}{3}\frac{\beta_1}{\beta^{2}}
=6\sqrt 2\beta^2   \Big[ (N-j+1)\, \gamma_{1,j}\,  e^{-  \sqrt 2 \,\beta  \,(\bar{f}_{j}-\bar{f}_{j-1} )}- (N-j)\,\gamma_{2,j} \,   e^{-  \sqrt 2 \,\beta  \,(\bar{f}_{j+1}-\bar{f}_j )}       \Big]
\end{split}
\end{equation}
for $ j= 1,\cdots,N$, where $\gamma_{i,j}, i =1,2$ are defined  in \eqref{ga1n}, \eqref{ga2n}.
The relations in \eqref{berf} can be rewritten in the forms
\begin{align}
\frac{2\sqrt 2}{3}k -\frac{\sqrt 2}{3}\frac{\beta_1}{\beta^{2}}
&= 6\sqrt 2  \beta^2    \,  \Big[  N\, \gamma_{1,1}\,  e^{-  \sqrt 2 \,\beta  \,(\bar{f}_{1}-\bar{f}_{0} )}- (N-1)\,\gamma_{2,1} \,   e^{-  \sqrt 2 \,\beta  \,(\bar{f}_{2}-\bar{f}_1 )}       \Big],  \label{equationoff1}
\\[2mm]
\frac{2\sqrt 2}{3}k -\frac{\sqrt 2}{3}\frac{\beta_1}{\beta^{2}}
&= 6\sqrt 2  \beta^2  \,  \Big[  (N-1)\, \gamma_{1,2}\,  e^{-  \sqrt 2 \,\beta  \,(\bar{f}_{2}-\bar{f}_{1} )}- (N-2)\,\gamma_{2,2} \,   e^{-  \sqrt 2 \,\beta  \,(\bar{f}_{3}-\bar{f}_2 )}       \Big],
\\[2mm]
\cdots  \cdots \cdots \cdots \cdots \nonumber
\\[2mm]
\frac{2\sqrt 2}{3}k -\frac{\sqrt 2}{3}\frac{\beta_1}{\beta^{2}}
&= 6\sqrt 2  \beta^2 \,  \Big[   2 \gamma_{1,N-1}\,  e^{-  \sqrt 2 \,\beta  \,(\bar{f}_{N-1}-\bar{f}_{N-2} )}- \,\gamma_{2,N-1}e^{-  \sqrt 2 \,\beta  \,(\bar{f}_{N}-\bar{f}_{N-1} )} \Big],
\\[2mm]
\frac{2\sqrt 2}{3}k -\frac{\sqrt 2}{3}\frac{\beta_1}{\beta^{2}}
&=   6\sqrt 2   \beta^2  \, \gamma_{1,N}\,  e^{-  \sqrt 2 \,\beta  \,(\bar{f}_{N}-\bar{f}_{N-1} )}.\label{equationoffN}
\end{align}
Combining  the assumption in \eqref{meancurvaturepositive}, the constraint $ \bar{f}_0= -\bar{f}_1$
and  the relations \eqref{relationofgamma_ingamma_i}, there exists a unique solution $\bar{\bf f} =(\bar{f}_1, \cdots, \bar{f}_N)^{T}$ to the nonlinear algebraic system \eqref{equationoff1}-\eqref{equationoffN}. Moreover, we can conclude that
\begin{align}\label{barfexpression1}
 \bar{f}_0= -\bar{f}_1 \,  \approx  \,  \frac{1}{2\sqrt 2 \beta} \Big[ \ln\Big( \, k  -\frac{\beta_1}{2\beta^{2}}\,    \Big) - \ln (9\,\gamma_1\,\beta^2) \Big],
\end{align}
\begin{align}\label{barfexpression2}
\bar{f}_2  \approx -  \frac{3}{2\sqrt 2 \beta}  \Big[  \ln \Big(  \, k  -\frac{\beta_1}{2\beta^{2}}\,    \Big) - \ln (9\,\gamma_1\,\beta^2) \Big],
\end{align}
$$ \cdots  \cdots \cdots \cdots \cdots
$$
\begin{align}\label{barfexpressionN}
\bar{f}_N  \approx -  \frac{2N-1}{2\sqrt 2 \beta}  \Big[ \ln \Big(  k  -\frac{\beta_1}{2\beta^{2}}\Big)  - \ln (9\,\gamma_1\,\beta^2)  \Big],
\end{align}
where $\gamma_1$ is the constant defined in \eqref{gamma01}.

\medskip
Then, by \eqref{eqaution1} we can choose
\begin{equation}
\begin{split}
\epsilon\,\phi_{13}(s,z)
\, :=\,&
\epsilon\, \sum_{j=1}^N \,  \psi^{**}_j(x_j, z)
+ \epsilon \sum_{j=1}^N \,  \psi^{**}_j( -x_j -2\beta f_j, z)
\\
=\,&\epsilon\, \sum_{j=1}^N \,  \psi^{**}_j(\beta(s-f_j), z)
+ \epsilon \sum_{j=1}^N \,  \psi^{**}_j( -\beta(s+f_j), z),
\end{split}	
\end{equation}
as a correction term,
where $\psi^{**}_j(x_j, z)$   satisfies
\begin{equation}\label{psi**1}
\psi^{**}_{j,zz} + \beta^{2}  \, \Big[\, \psi^{**}_{j,x_jx_j }\, +\, {F'}(H_j)\psi^{**}_j\, \Big] \,=\,-\, \Big[ \beta_1 \bar{f}_j \Xi_{0,j}
+\,\Xi_{1,j}
\,+\,\chi_{j}\Xi_{4,j}\,\Big]\quad     \text{in}\,   \mathbb{R}\times \Big[0,\frac{\ell}{\epsilon} \Big),
\end{equation}
\begin{equation}\label{psi**2}
\psi^{**}_j(x_j, 0)=\psi^{**}_j\Big(x_j, \frac{\ell}{\epsilon}\Big),
\quad \psi^{**}_{j,z}(x_j, 0)=\psi^{**}_{j,z}\Big(x_j, \frac{\ell}{\epsilon}\Big),
\end{equation}
\begin{equation}\label{psi**3}
\int_\mathbb{R}{\psi^{**}_j H_{j, x_j}}\,{\mathrm{d}}x_j=0,\quad   0<z<\frac{\ell}{\epsilon}.
\end{equation}
It can be checked that
\begin{align*}
\epsilon   \, \frac{\partial  \phi_{13}(s,z)}{\partial  s } \Big|_{s=0} \, =\,   0.
\end{align*}

\medskip
Finally, we define the first correction as
\begin{equation}\label{phi1}
\begin{split}
\phi_1(s,z) &= \epsilon |\ln \epsilon| \, \phi_{11}(s,z) + \epsilon \, \phi_{12}(s,z) \, +\, \epsilon  \, \phi_{13}(s,z)
\\[1mm]
 \, &=\, \epsilon \, \beta^{-2} \beta_1\, \sum_{j=1}^N  \Big[  \dot { f}_j  \,\psi_j( x_j) +   \dot {f}_j      \psi_j  (\tau_j  ) \Big]
+ \epsilon \sum_{j=1}^N \Big[  \psi^{*}_j(x_j, z) +   \psi^{*}_j (\tau_j, z) \Big]
 \\
 &\quad
 + \epsilon \, \sum_{j=1}^N \,  \Big[\psi^{**}_j(x_j, z)+
 \psi^{**}_j( \tau_j, z)\Big],
\end{split}
\end{equation}
where we have denoted
 \begin{align}\label{tauj}
\tau_j(s,z)  = - x_j- 2\beta f_j=  - \beta (s +f_j).
\end{align}
Then we can get easily that  the boundary error for $\phi_1$ satisfies
\begin{align}\label{boundary2}
{\mathcal{D}} (\phi_1)\, =\,  0.
\end{align}
We take
\begin{align}\label{secondapproximatesolution}
u_2(s,  z)
=u_1(s, z)
+\phi_1(s,z)
\end{align}
as the {\textbf{second approximate solution}}.

\subsubsection{The error to the second approximation}
The new error can be computed as the following
 \medskip
\begin{equation}\label{S(v+phi1)}
\begin{split}
{\bf S}(u_2) \,=&\,{\bf S}(u_1)  +  L_{u_1}(\phi_1)
+ N_{u_1}(\phi_1)  +B_3(\phi_1)
\\[2mm]
&\,  +B_ 2(u_1+\phi_1)-B_2(u_1)
+B_4(u_1+\phi_1)-B_4(u_1),
\end{split}
\end{equation}
with
\begin{equation}\label{Lu1}
L_{u_1} (\phi_1) \, =\,\phi_{1,zz} + \phi_{1,ss } \,+\, \beta^2 \, (1-3u_1^2)\phi_1,\quad N_{u_1}(\phi_1)\, =\, \beta^2  ( -3\, u_1\,\phi_1^2-\phi_1^3).
\end{equation}
We will give the details of computations for the new error components in the sequel.

\medskip
First, recalling  the definition of $ \tau_j $ as in \eqref{tauj}, it is easy to obtain that
\begin{equation}\label{tauz}
\tau_{j,z}  (s,z) =  \big( - \beta s-\beta f_j \big)_z\, =\, -\epsilon \Big[ \frac{\beta' }{\beta}x_j+ \beta f'_j+ 2\beta' f_j\Big],
\end{equation}
\begin{equation}\label{tauzz}
\tau_{j, zz}(s,z)  =  \big( - \beta s-\beta f_j\big)_{zz}
\, =\,-\epsilon^2 \Big[  \frac{\beta'' }{\beta}x_j+ 2 \beta'' f'_j+ \beta f''_j+2\beta'f'_j \Big].
\end{equation}
Here we declare that  we will use the symbols $   \psi_j, \psi_j^*, \psi_j^{**}$ to denote $     \psi_j(x_j), \psi_j^*(x_j,z), \psi_j^{**}(x_j,z) $  in the sequel  for simplicity.

\medskip
According to the definition of $\phi_1$ as in \eqref{phi1}, we have
\begin{align*}
L_{u_1} (\phi_1)
\,=&\,  \epsilon \sum_{j=1}^N  \beta_1  \dot{ f}_j \Big[    \psi_{j, x_jx_j} + (1-3 H_j^2)\psi_j \Big]
+ \epsilon  \sum_{j=1}^N  \Big\{ \psi^*_{j, zz}  +\beta^2 \big[  \psi^*_{j, x_jx_j} + (1-3 H_j^2) \psi_j^*  \big] \Big\} \nonumber
\\[2mm]
\quad &+ 2\epsilon^2\sum_{j=1}^N \psi^{*}_{j,zx_j} \Big[\frac{\beta'}{\beta}  x_j- \beta f_j' \Big]+ \epsilon  \sum_{j=1}^N  \Big\{\psi^{**}_{j,zz}+\beta^2 \big[ \psi^{**}_{j,x_jx_j} +  (1-3 H_j^2)\psi_j^{**}  \big]   \Big\}
\nonumber
\\
\quad &+2\epsilon^2\sum_{j=1}^N \psi^{**}_{j,zx_j}   \Big[\frac{\beta'}{\beta} x_j- \beta f_j' \Big]
 + \epsilon \beta_1 \sum_{j=1}^N  ( 3 H_j^2 - 3 u_1^2)\dot { f}_j \psi_j
 \nonumber
\\[2mm]
\quad  &
+ \sum_{j=1}^N \Bigg\{\epsilon \beta_1 \dot{ f}_j  \Big[\psi_{j,\tau_j  \tau_j }(\tau_j ) + (1-3 u_1^2)\psi_j(\tau_j)  \Big]
\\[2mm]
  &\quad +\epsilon  \beta^2   \Big[    \psi^*_{j,\tau_j \tau_j}(\tau_j, z ) + (1-3u_1^2)\psi^*_j(\tau_j,z)  \Big]+ \epsilon \psi^{*}_{j,zz}(\tau_j, z)+2\epsilon \psi^{*}_{j,\tau_j z}(\tau_j, z)\tau_{j,z}  \nonumber
  \\[2mm]
&\quad +  \epsilon \beta^2\Big[\psi^{**}_{j, \tau_j  \tau_j }(\tau_j, z ) + (1-3 u_1^2)\psi^{**}_j(\tau_j,z)  \Big] + \epsilon \psi^{**}_{j,zz}(\tau_j, z)
+2\epsilon \psi^{**}_{j,\tau_j z}(\tau_j, z)\tau_{j,z } \Bigg\}
\nonumber
\\
& + \epsilon \beta^2 \sum_{j=1}^N   ( 3 H_j^2 - 3 u_1^2)(  \psi_j^*+ \psi^{**}_j )
+O(\epsilon^{2+\sigma})\nonumber
\\
=&-\epsilon  \beta_1  \sum_{j=1}^N \, \dot { f}_j \, \Xi_{0,j} -\epsilon \, \sum_{j=1}^N \, { \chi_j}    \Xi_{3,j}
-\epsilon \, \sum_{j=1}^N  \Big(   \beta_1 \bar{f}_j \Xi_{0,j} +\,  \Xi_{1,j} \,+ \, \chi_j\Xi_{4,j}\Big)        \nonumber
\\[2mm]
\quad  &    + \epsilon \beta_1  \, \sum_{j=1}^N     ( 3 H_j^2 - 3 u_1^2)  \dot { f}_j \, \psi_j+ \epsilon \beta^2 \sum_{j=1}^N    ( 3 H_j^2 - 3 u_1^2)( \,  \psi_j^*+ \psi^{**}_j )
 \nonumber
 \\[2mm]
 \quad  & +\epsilon^2  |  \ln \epsilon|  \,  \sum_{j=1}^N  \Xi_{7,j}
 +\epsilon^2   \,  \sum_{j=1}^N  \Xi_{8,j} +    \mathcal{K}_1+O(\epsilon^{2+\sigma}),\nonumber
 \end{align*}
  where
\begin{equation}\label{Xi7j}
\epsilon^2  |  \ln \epsilon| \,  \Xi_{7,j}
:= - 2\epsilon^2  \beta \sum_{j=1}^N \big( \dot f_j'  +\bar f '_j\big)\Big[\psi^{*}_{j,zx_j} (x_j, z)+\psi^{**}_{j,zx_j} (x_j, z)\Big] ,
\end{equation}
\begin{align} \label{Xi8jnew}
\epsilon^2 \,   \Xi_{8,j} :=& 2\epsilon^2\,\frac{\beta'}{\beta} x_j\,\Big[  \psi^{*}_{j,zx_j} (x_j, z)   + \psi^{**}_{j,zx_j} (x_j, z) \Big]    \nonumber
\\[1mm]
\quad &
- 2 \epsilon^2\, \Big( \frac{\beta' }{\beta}x_j+ \beta f'_j+ 2\beta' f_j\Big)  \Big[\psi^{*}_{j,\tau_j z}(\tau_j, z)  + \psi^{**}_{j,\tau_j z}(\tau_j, z)  \Big],	
\end{align}
and
\begin{equation}\label{mathcalK1}
\begin{split}
\mathcal{K}_1\,: =&  \sum_{j=1}^N   \Big\{ \epsilon  \, \beta_1 \, \dot{ f}_j  \, \Big[    \psi_{j,\, \tau_j \tau_j}(\tau_j ) + (1-3 u_1^2)\psi_j(\tau_j)  \Big]
\\[1mm]
& \qquad   +  \epsilon  \beta^2 \Big[    \psi^*_{j,\, \tau_j \tau_j}(\tau_j, z ) + (1-3u_1^2)\psi^*_j(\tau_j,z) \Big]  + \epsilon \psi^{*}_{j,zz}(\tau_j, z)
\\[1mm]
& \qquad +  \epsilon \beta^2 \Big[    \psi^{**}_{j, \tau_j \tau_j}(\tau_j, z ) + (1-3 u_1^2)\psi^{**}_j(\tau_j,z)  \Big]
+ \epsilon \psi^{**}_{j,zz}(\tau_j, z) \Big\}.
\end{split}
\end{equation}

\medskip
According to the definition of $B_3$ as in \eqref{B3}, we have
 \begin{align}\label{decomB3}
B_3(\phi_1)= &\, -\epsilon^2\,  \beta^{-1} \,  k  \sum_{j=1}^N   \beta_1 \, {\dot f}_j   \, \psi^{}_{j, x_j}(x_j)
-\epsilon^2\beta  k  \sum_{j=1}^N \Big[\psi^{*}_{j, x_j}(x_j, z)+  \psi^{**}_{j, x_j}(x_j, z)\Big]\nonumber
\\
&+\epsilon^2  \sum_{j=1}^N   \Big[\beta^{-1} k   \beta_1 {\dot f}_j \psi^{}_{j, \tau_j}(\tau_j )
+ \beta \,  k \big( \psi^{*}_{j, \tau_j}(\tau_j,z) +  \psi^{**}_{j, \tau_j} (\tau_j,z)\big) \Big] +O(\epsilon^{2+\sigma})
\\
\, =&\,\epsilon^2 |\ln \epsilon|   \sum_{j=1}^{N} \Xi_{9,j}  +  \mathcal{K}_2+ O(\epsilon^{2+\sigma}),\nonumber
 \end{align}
 with
 \begin{equation}\label{Xi8j}
\epsilon^2 |\ln \epsilon|\,  \Xi_{9,j}
\,: =\,    -\epsilon^2\beta^{-1}  k  \beta_1 \dot{f}_j  \psi_{j, x_j}(x_j ) -  \epsilon^2\beta\,  k    \Big[\psi^{*}_{j, x_j}(x_j, z) + \psi^{**}_{j, x_j}(x_j, z)  \,\Big],
 \end{equation}
  \begin{equation}
  \mathcal{K}_2:= \epsilon^2  \sum_{j=1}^N   \Big[ \,  \beta^{-1} \,  k   \beta_1 \, {\dot f}_j  \, \psi^{}_{j, \tau_j}(\tau_j )
+ \beta \,  k \big( \psi^{*}_{j, \tau_j}(\tau_j,z) +  \psi^{**}_{j, \tau_j} (\tau_j,z)\big) \Big].
 \end{equation}
Moreover,  we  have
 \begin{equation*}
    B_2(u_1+\phi_1)-B_2(u_1)=O(\epsilon^{2+\sigma}).
 \end{equation*}

 \medskip
Recalling the definition of the function $B_4$ in  \eqref{B4u} and together with \eqref{mathfrakfj} and \eqref{epsilon12},  by calculating, we can obtain
\begin{align}\label{decomB4}
&B_4(u_1+\phi_1)-B_4(u_1) \nonumber
\\[1mm]
&  =  \Big[ \epsilon s\, \beta_1+\frac{1}{2}\epsilon^2s^2\beta_2\Big]  \sum_{j=1}^N\, (1-3H_j^2)   \phi_1   \nonumber
\\[1mm]
&  \quad+   \Big[ \epsilon s\, \beta_1+\frac{1}{2}\epsilon^2s^2\beta_2\Big] \sum_{j=1}^N\Big[   (3H_j^2-3u_1^2)   \phi_1  - 3 u_1\phi_1^2 -\phi_1 ^3  \Big]
\\[1mm]
& = \,  \epsilon^2  \beta_1\,  \sum_{j=1}^N \Big(\frac{x_j}{\beta} + f_j \Big)   (1 -3H_j^2)   \Big[\beta_1 {\dot f}_j \, \beta^{-2}\, \psi_j(x_j) +\psi_j^*(x_j,z) + \psi^{**}_j(x_j,z) \Big] + O(\epsilon^{2+\sigma})\nonumber
\\[1mm]
& =\epsilon^2  |\ln \epsilon |  \sum_{j=1}^N  \Xi_{10,j}  + O(\epsilon^{2+\sigma}),\nonumber
\end{align}
with
\begin{equation}\label{Xi9j}
\epsilon^2  |\ln \epsilon |  \Xi_{10,j}
\, :=\,  \epsilon^2  \beta_1\Big(\frac{x_j}{\beta} + {\dot f}_j +{\bar f}_j \Big) (1 -3H_j^2) \Big[\beta_1 {\dot f}_j  \beta^{-2} \psi_j(x_j) +\psi_j^*(x_j,z) + \psi^{**}_j(x_j,z) \Big].
\end{equation}
For the nonlinear term $N_{u_1}(\phi_1)$, we have that
\begin{equation*}
\begin{split}
N_{u_1}(\phi_1)\, & = \, \beta^2 ( -3\, u_1\,\phi_1^2-\phi_1^3) = \,  -3\, \beta^2 \, u_1  \, \phi_1^2 + O(\epsilon^{2+\sigma}).
\end{split}
\end{equation*}

Combining \eqref{S(u13)},  \eqref{S(v+phi1)}, 
\eqref{decomB3}, 
\eqref{decomB4},
the new error can be expressed  as the following
\begin{equation}
\begin{split}
\label{S(u211)}
 {\bf S} (u_2) \, = \,& \epsilon \beta_1\sum_{j=1}^{N}  \hat{f}_j \Xi_{0,j}
 +\epsilon \sum_{j=1}^N \chi_j \Xi_{5,j}+3 \beta^2 \epsilon \sum_{j=1}^{N}  (  H_j^2 -  u_1^2) (\beta_1 \beta^{-2}  \dot{ f}_j \psi_j+\psi_j^*+ \psi^{**}_j )
\\[1mm]
& + \epsilon^2 |\ln\epsilon| \sum_{j=1}^{N}\Xi_{2,j}
+\epsilon^2 |\ln \epsilon|   \sum_{j=1}^{N} \chi_j\Xi_{6,j}
+\epsilon^2  |  \ln \epsilon|  \sum_{j=1}^N  \Xi_{7,j}+\epsilon^2 \sum_{j=1}^N  \Xi_{8,j}
\\[1mm]
 \quad  &  +\epsilon^2  |\ln \epsilon | \sum_{j=1}^N  \Xi_{9,j}
+\epsilon^2 |\ln \epsilon| \sum_{j=1}^N \Xi_{10,j}
+\epsilon^2 |\ln\epsilon|  E_{04}
+\epsilon^2 |\ln\epsilon|^2  E_{03}
\\[1mm]
 \quad  &   +\epsilon^2 E_{06} +E_{07}
 +  \mathcal{K}_1+\mathcal{K}_2
 +  N_{u_1}(\phi_1) + O(\epsilon^{2+\sigma}).
\end{split}
\end{equation}

\subsubsection{More precise estimates of the error ${\bf S}(u_2)$} In order to get the more precise estimates of the error, we set
\begin{equation} \label{hatfj}
\hat{f_j}(\theta) \,=\, \check{f}_j(\theta) \,+\,{\tilde f}_j(\theta) , \quad    \forall\, j = 1,\cdots,N,
\end{equation}
and  in particular, let
\begin{align}
\check{ f}_0= -\check{f}_1, \quad {\tilde f}_0= - {\tilde f}_1.
\end{align}
\medskip
The parameters $\check{f}_1, \cdots, \check{f}_n$ will be found by solving an algebraic system in \eqref{checkf} so that
\begin{align}\label{checkfnorm}
 \|\check { f} _j \|_{L^\infty(0,\ell)} \leq C\epsilon^{\frac 12},\quad j=1,\cdots, N.
\end{align}
In all what follows, we will make the following assumption on the parameter ${\tilde{\bf f}} = ({\tilde f}_1, \cdots, {\tilde f}_N)^{T} $
\begin{align}\label{ddotfnorm}
\| {{\tilde{\bf f}} }\| := \epsilon \| {\tilde{\bf f}} ''\|_{L^2 (0, \ell ) } + \epsilon^{\frac 12 } \| {\tilde{\bf f}} '\|_{L^2 (0, \ell ) }
+  \| {\tilde{\bf f}}\|_{L^\infty (0, \ell ) }  = O(\epsilon^{\frac 12+ \sigma} ),
\end{align}
which will be determined in Sections \ref{section6} and \ref{section7}, by using the reduction method.

\medskip
 Similar to  \eqref{mathbfdn},  we set the functions
 \begin{equation}\label{mathbf kj}
{\mathbf k}_j (\theta)\,\,:=\, \left\{
 \begin{array}{cl}
  \, {\mathbf d}_j (\theta)e^{-\sqrt 2 \, \beta(\theta) \big(\check{f}_{j}(\theta)-\check{f}_{j-1}(\theta)\big) },  \quad &\text{when $  j=1, \cdots, N,  $ }
 \\[1mm]
0,  \, \quad &\text{when $ j \, = \,   N+1$. }
\end{array}
\right.
\end{equation}
Therefore, we can decompose  the term $\, \Xi_{5,j}$ (c.f. \eqref{Xi7n}) as follows
 \begin{align} \label{decomposeXi5j}
\Xi_{5,j} \,= \, \Xi_{5,j}^{1}+ \, \Xi_{5,j} ^{2}
 \end{align}
  where
 \begin{equation} 	\label{Xi5j^1}
\begin{split}
\Xi_{5,j}^1 :=
 - 6 \sqrt 2\, \frac{1}{\gamma_{0,j} }  \beta^2  \,H_{j,x_j}  \,
 \Big[& - \gamma_{1,j}\,  \big(\,{\mathbf k}_{j}- {\mathbf d}_{j} \, \big)
 \, + \, \gamma_{2,j}\,  \big(  {\mathbf k}_{j+1} \,  -{\mathbf d}_{j+1} \, \big) \Big],
 \end{split}
\end{equation}
and
\begin{equation}
\begin{split}
\label{Xi5j^2}
 \Xi_{5,j} ^2 :=
 - 6 \sqrt 2 \,   \frac{1}{\gamma_{0,j} }\, \beta^2 \, H_{j,x_j}  \,
 \Big[ & - \gamma_{1,j}\,   {\mathbf k}_{j} \,  \big(\,e^{\,-  \sqrt 2 \,\beta  \,({\tilde f}_{j}-{\tilde f}_{j-1})\,}-1 \, \big)
 \\[1mm]
 \,&+ \, \gamma_{2,j}\,     {\mathbf k}_{j+1} \,  \big(  e^{\,-  \sqrt 2 \,\beta  \,({\tilde f}_{j+1}-{\tilde f}_j)\,}  -1 \big)   \Big].
\end{split}
\end{equation}

Recalling the definition of $E_{07}$ as in \eqref{E032} and the expression of ${\mathbf b}_{1j}, {\mathbf b}_{3j}$ as in \eqref{b1} and \eqref{b3}, we  can rewrite $E_{07}$ as
\begin{align}
E_{07} = E_{07}^1+ E_{07}^2.
\end{align}
Here
\begin{align}
E_{07}^1 &=  \beta^2 \sum_{j=1}^N (-1)^{j} {\chi_j}  \Big[\,\frac 12 {\mathbf A}_{1j} \, {\mathbf b}_{1j}^2
 +\frac 12 {\mathbf A}_{3j}\, {\mathbf b}_{3j}^2 +  \frac 12  {\mathbf A}_{5j}  {\mathbf b}_{1j}^2+ \frac 12  {\mathbf A}_{5j}  {\mathbf b}_{3j}^2 \,\Big]\nonumber
   \\[1mm]
 &=
 \, 2\epsilon^2\beta^2 \sum_{j=1}^N  (-1)^{j } { \chi_j}
\, \Big[\big( {\mathbf A}_{1j} +   {\mathbf A}_{5j } \big)      {\mathbf k}^2_j  \, e^{- 2\sqrt 2  x_j   }
  e^{-2\sqrt{2} \beta({\tilde f}_{j} -{\tilde f}_{j-1}  )}
  \\[1mm]
  &\qquad \qquad \qquad \qquad \qquad +   \big( {\mathbf A}_{3j}+{\mathbf A}_{5j} \big)    {\mathbf k}^2_{j+1}  \, e^{ 2 \sqrt 2  x_j   }    e^{-2\sqrt{2} \beta({\tilde f}_{j+1} -{\tilde f}_{j}  )}  \Big]+ O(\epsilon^{2+\sigma} ) \nonumber
 \\[1mm]  &=
 \epsilon^{\frac{3}{2}}\sum_{j=1}^N { \chi_j}\Xi_{11,j}  + O(\epsilon^{2+\sigma} ), \nonumber
\end{align}
with
\begin{equation}
\epsilon^{\frac{3}{2}}\Xi_{11,j} :=
 2\epsilon^2\beta^2   (-1)^{j }
\, \Big[   \big( {\mathbf A}_{1j} +   {\mathbf A}_{5j } \big)      {\mathbf k}^2_j  \, e^{- 2\sqrt 2  x_j   }
  +   \big( {\mathbf A}_{3j}+{\mathbf A}_{5j} \big)    {\mathbf k}^2_{j+1}  \, e^{ 2 \sqrt 2  x_j   }      \Big].
\end{equation}
On the other hand, we can rewrite $E_{07}^2$ as following
\begin{align}
E_{07}^2
\,&=\,\beta^2 \sum_{j=1}^N (-1)^{j} {\chi_j}  3(1-H_j^2)({\mathbf b}_{2j}+ {\mathbf b}_{4j})
+\beta^2 \sum_{j=1}^N (-1)^{j} {\chi_j}  {\mathbf A}_{5j} {\mathbf b}_{1j} {\mathbf b}_{3j}\nonumber
\\[1mm]
&\quad-\epsilon \beta_1\beta^2  \sum_{j=1}^N   (-1)^{j} {\chi_j}  \Big( \frac{x_j }{\beta} + { f}_j\Big)3(1-H_j^2)\,({\mathbf b}_{1j} +{\mathbf b}_{3j}) \nonumber
\\
& = 6 \sqrt 2 \epsilon^2\beta^2 \sum_{j=1}^N {\chi_j}  H_{j,x_j } \Big[    -{\mathbf k}_{j}   {\mathbf k}_{j-1}e^{- \sqrt 2  x_j   }      e^{-\sqrt{2} \beta({\tilde f}_{j} -{\tilde f}_{j-2}  )}
+  {\mathbf k}_{j+1}   {\mathbf k}_{j+2}e^{ \sqrt 2  x_j   } e^{-\sqrt{2} \beta({\tilde f}_{j+2} -{\tilde f}_{j}  )} \Big] \nonumber
\\[1mm]
&\quad -6 \sqrt 2\epsilon^2 \beta_1\beta^2  \sum_{j=1}^N  {\chi_j}  \Big( \frac{x_j }{\beta} + \dot{ f}_j+ \bar{f}_j + \check{f}_j \Big)    H_{j,x_j }     \nonumber
\\[1mm]
&\qquad \qquad \qquad \qquad \qquad \qquad \times \Big[- {\mathbf k}_{j} e^{- \sqrt 2  x_j   } e^{-\sqrt{2} \beta({\tilde f}_{j} -{\tilde f}_{j-1}  )}
+ {\mathbf k}_{j+1} e^{ \sqrt 2  x_j   } e^{-\sqrt{2} \beta({\tilde f}_{j+1} -{\tilde f}_{j}  )}   \Big] \nonumber
\\[1mm]
&\quad -4 \epsilon^2\beta^2 \sum_{j=1}^N (-1)^{j} {\chi_j}  {\mathbf A}_{5j}  {\mathbf k}_{j}   {\mathbf k}_{j+1} e^{-\sqrt{2} \beta({\tilde f}_{j+1} -{\tilde f}_{j-1}  )} + O(\epsilon^{2+\sigma} )\nonumber
\\[1mm]
& = \epsilon^2 |\ln \epsilon |\sum_{j=1}^N {\chi_j} \Xi_{12,j}+ O(\epsilon^{2+\sigma} ),\nonumber
\end{align}
with
\begin{equation}\label{Xi12j}
\begin{split}
 \epsilon^2|\ln \epsilon |\Xi_{12,j}
 & := 6 \sqrt 2 \epsilon^2\beta^2  H_{j,x_j } \Big[- {\mathbf k}_{j}{\mathbf k}_{j-1}e^{- \sqrt 2  x_j   }
 + {\mathbf k}_{j+1}{\mathbf k}_{j+2} e^{ \sqrt 2  x_j}\Big]
\\[2mm]
&\quad-6 \sqrt 2\epsilon^2 \beta_1\beta^2 \Big( \frac{x_j }{\beta} + \dot{ f}_j+ \bar{f}_j\Big) H_{j,x_j } \Big[-{\mathbf k}_{j}e^{- \sqrt 2  x_j   }
+{\mathbf k}_{j+1} e^{ \sqrt 2  x_j}  \Big]
\\[2mm]
&\quad -4 \epsilon^2\beta^2 (-1)^{j}  {\mathbf A}_{5j} {\mathbf k}_{j}   {\mathbf k}_{j+1}.
\end{split}
\end{equation}
Thanks to the constraint of ${\tilde{\bf f}} $ as in \eqref{ddotfnorm} and a Taylor expression, the terms $e^{-\sqrt{2} \beta({\tilde f}_{j+1} -{\tilde f}_{j}  )}\,-\,1$ are of order $\epsilon^{\frac{1}{2}+\sigma}$.
\medskip
For the term $ \mathcal{K}_1$ (c.f. \eqref{mathcalK1}), we can decompose it as follows
\begin{equation*}
 \mathcal{K}_1\, = \,\epsilon^{\frac{3}{2}}\mathcal{K}_3\,+\,   \mathcal{K}_4,
\end{equation*}
with
\begin{equation} \label{mathcal{K}3}
\begin{split}
\epsilon^{\frac{3}{2}}\mathcal{K}_3\, &=\,    \epsilon  \, \beta_1 \, \dot{ f}_1  \Big[  \psi_{1,\, \bar{\tau}_1 \bar{ \tau}_1 }(\bar{\tau}_1 ) + \big (1-3 H_1^2 \big)\psi_1( \bar{\tau}_1)  \Big]
 \\[1mm]
 & \quad +   \epsilon \,\beta^2 \Big[  \psi^*_{1,\, \bar{\tau}_1 \bar{ \tau}_1 }(\bar{\tau}_1, z ) + \big(1-3 H_1^2  \big)\psi^*_1( \bar{\tau}_1,z)  \Big]+\epsilon   \psi^{*}_{1,zz}( \bar{\tau}_1, z)
 \\[1mm]
 & \quad+   \epsilon\, \beta^2 \Big[  \psi^{**}_{1,\, \bar{\tau}_1 \bar{ \tau}_1 }(\bar{\tau}_1, z ) +\big(   1-3 H_1^2   \big)\psi^{**}_1( \bar{\tau}_1,z)  \Big] +\epsilon   \psi^{**}_{1,zz}( \bar{\tau}_1, z),
\end{split}
\end{equation}
\begin{align}\label{mathcalK4}
 \mathcal{K}_4\, =&  \sum_{j=2}^N   \Bigg \{    \epsilon  \, \beta_1 \, \dot{ f}_j  \, \Big[    \psi_{j,\, \tau_j \tau_j}(\tau_j ) + (1-3 u_1^2)\psi_j(\tau_j)  \Big]
 \nonumber
\\[1mm]
 & \qquad\,\, +  \epsilon \, \beta^2 \Big[    \psi^*_{j,\, \tau_j \tau_j}(\tau_j, z ) + (1-3u_1^2)\psi^*_j(\tau_j,z)  \Big]+  \epsilon \psi^{*}_{j,zz}(\tau_j, z) \nonumber
\\[1mm]
 & \qquad\,\,+  \epsilon \,\beta^2 \Big[    \psi^{**}_{j,\, \tau_j \tau_j}(\tau_j, z ) + (1-3 u_1^2)\psi^{**}_j(\tau_j,z) \Big]   + \epsilon \psi^{**}_{j,zz}(\tau_j, z)
\Bigg\}
\\[1mm]
  &  + \epsilon  \, \beta_1 \, \dot{ f}_1  \Big[ \psi_{1,\,  {\tau}_1  { \tau}_1 }( {\tau}_1 )-  \psi_{1,\, \bar{\tau}_1 \bar{ \tau}_1 }(\bar{\tau}_1 ) + \big (1-3 H_1^2 \big) \big( \psi_1( {\tau}_1)  - \psi_1( \bar{\tau}_1)   \big)\Big]  \nonumber
 \\[1mm]
 & +
  \epsilon \,  \Big[ \psi^{*}_{1,zz}(  {\tau}_1, z) -\psi^{*}_{1,zz}( \bar{\tau}_1, z) \Big] +
  \epsilon \, \Big[\psi^{**}_{1,zz}(  {\tau}_1, z)-   \psi^{**}_{1,zz}( \bar{\tau}_1, z) \Big]
  \nonumber
 \\[1mm]
 &  +   \epsilon \,\beta^2 \Big[  \psi^*_{1,\,  {\tau}_1  { \tau}_1 }({\tau}_1, z )- \psi^*_{1,\, \bar{\tau}_1 \bar{ \tau}_1 }(\bar{\tau}_1, z ) + \big(1-3 H_1^2  \big) \big (\psi^*_1(  {\tau}_1,z)-   \psi^*_1( \bar{\tau}_1,z) \big)   \Big]\nonumber
 \\[1mm]
 &  +   \epsilon\, \beta^2 \Big[  \psi^{**}_{1,\,  {\tau}_1  { \tau}_1 }( {\tau}_1, z ) -  \psi^{**}_{1,\, \bar{\tau}_1 \bar{ \tau}_1 }(\bar{\tau}_1, z ) +\big(   1-3 H_1^2   \big) \big( \psi^{**}_1(  {\tau}_1,z)   - \psi^{**}_1( \bar{\tau}_1,z)  \big)  \Big], \nonumber
 \end{align}
and
\begin{align} \label{bartau1}
\bar{\tau}_1\, =\,   -  x_1-2\beta (\dot{f}_1 + \bar{f}_1+ \check{f}_1).
\end{align}

Then  the new error can be rewritten as the following
\begin{align}\label{S(u21)}
 {\bf S} (u_2) \, = \,& \epsilon \beta_1\sum_{j=1}^{N}   \big( \check{f}_j+{\tilde f}_j \big)   \Xi_{0,j}
 \,+\,\epsilon \sum_{j=1}^N  {\chi_j} \Xi_{5,j}^1
  \,+\,\epsilon \sum_{j=1}^N  {\chi_j} \Xi_{5,j}^2 \nonumber
\\
& \,+\, \epsilon\, 3 \beta^2  \sum_{j=1}^{N}  (  H_j^2 -  u_1^2) \Big[\beta_1  \beta^{-2} \dot{ f}_j \psi_j+\psi_j^*+ \psi^{**}_j \Big]
+ \epsilon^2 |\ln\epsilon| \sum_{j=1}^{N}\Xi_{2,j} \nonumber
 \\
&
\,+\,\epsilon^2 |\ln \epsilon|   \sum_{j=1}^{N}  {\chi_j}\Xi_{6,j}
\,+\,\epsilon^2  |  \ln \epsilon|  \sum_{j=1}^N  \Xi_{7,j}
\,+\,\epsilon^2 \sum_{j=1}^N  \Xi_{8,j}
\,+\,\epsilon^2  |\ln \epsilon | \sum_{j=1}^N  \Xi_{9,j}
\\[1mm]
 \quad  & \,+\,\epsilon^2 |\ln \epsilon| \sum_{j=1}^N \Xi_{10,j}
 \,+\,\epsilon^{\frac{3}{2}} \sum_{j=1}^N  {\chi_j} \Xi_{11,j}
 \,+\, \epsilon^2|\ln \epsilon| \sum_{j=1}^N  {\chi_j} \Xi_{12,j}
\,+\,\epsilon^2 |\ln\epsilon|  E_{04}
 \nonumber
\\[1mm]
 \quad  &   \,+\,\epsilon^2 |\ln\epsilon|^2  E_{03}
 \,+\,\epsilon^2 E_{06}
\,+\, \mathcal{K}_2\,+\, \epsilon^{\frac{3}{2}}\mathcal{K}_3+ \mathcal{K}_4
 \,+\, N_{u_1}(\phi_1) \,+\, O(\epsilon^{2+\sigma}).\nonumber
\end{align}

\medskip
\subsection{The third approximation}
It turns out that, in the expression of $ {\bf S} (u_2)$, the terms of order $\epsilon^{\frac{3}{2}}$ are
$$
\epsilon \beta_1 \sum_{j=1}^N \check{f}_j \Xi_{0,j}, \quad \epsilon^{\frac{3}{2}} \mathcal{K}_3,\quad
\epsilon \sum_{j=1}^N  {\chi_j} \Xi^1_{5,j}
\quad
\mbox{and}
\quad
 \epsilon^{\frac{3}{2}} \sum_{j=1}^N  {\chi_j} \Xi_{11,j}.
 $$
 Since these terms are too large for our purpose, we add a correction to the approximate solution in order to cancel these terms and then compute the corresponding error.

\subsubsection{New correction terms}
In order to cancel the terms
$$
 \epsilon \beta_1\sum_{j=1}^N  \check{f}_j \Xi_{0,j} \,+\,  \epsilon^{\frac{3}{2}} \mathcal{K}_3\,+\,\epsilon \sum_{j=1}^N  {\chi_j} \Xi^1_{5,j}  \,+\, \epsilon^{\frac{3}{2}} \sum_{j=1}^N    {\chi_j}\Xi_{11,j},
$$
we shall choose $\check{f}_j $ such that
\begin{align} \label{eqaution 2}
\int_\mathbb{R}\Big[    \epsilon \beta_1   \check{f}_j \Xi_{0,j} +\epsilon^{\frac{3}{2}} \mathcal{K}_3 +\epsilon  {\chi_j} \Xi^1_{5,j}  +   \epsilon^{\frac{3}{2}}  {\chi_j} \Xi_{11,j} \Big]
H_{j,x_j}\,{\mathrm{d}}x_j=0,
\quad\forall\, j=1,\cdots,N.
\end{align}
Here we note that,       by using  comparison  principle  argument  and combining the effect of  the cut-off functions $ \chi_j $,    there exists  an universal constant $ C$ such that
\begin{align}
\label{estimatepsi1}
  | \psi _j ( x_j   ) |       \le C e^{  - \sqrt 2  \beta (s-f_ j )   },    \quad  | \psi ^{**}   _j ( x_j, z   ) |       \le C e^{  - \sqrt 2  \beta (s-f_ j )   },     \end{align}
when $ s$ is far away from $ f_j$,
and
\begin{align} \label{estimatepsi2}
 | \psi^*  _j ( x_j,z   ) |       \le  \begin{cases} C,        & \text{ when $ s \in   \big\{ s\in (0, + \infty)~  | ~ (s,z) \in  {\mathfrak B}_j\big\},     $  }     \\[3mm]
 C   e^{  - \sqrt 2  \beta (s-f_ j )   },        & \text{ when $ s \notin   \big\{ s \in (0, + \infty) ~  | ~ (s,z) \in  {\mathfrak B}_j\big\},    $  }
 \end{cases}
 \end{align}
 where $   {\mathfrak B}_j$ is  support of $ \chi_j$.
 Recall the expression of $ \epsilon^{\frac{3}{2}} \mathcal{K}_3 $ in \eqref{mathcal{K}3} and the  definition of $\tau_1 $ in \eqref{bartau1}, we can obtain
\begin{align*}
\epsilon^{\frac{3}{2}}\int_\mathbb{R} \mathcal{K}_3 \, H_{j,x_j}\,{\mathrm{d}}x_j  \, =\, \epsilon^{\frac 32}  \mathfrak{T}_0 (\dot{f}_1,  \bar{f}_1, \check{f}_1)\delta_{1j}+O(\epsilon^{2+\sigma}),
\end{align*}
where function $\mathfrak{T}_0 (\dot{f}_1,  \bar{f}_1, \check{f}_1)$  is of order $O(1)$ and we can show  easily the Lipschitz dependence on $\check{f}_1$
\begin{align*}
|\mathfrak{T}_0 (\dot{f}_1,  \bar{f}_1, \check{f}^{(1)}_1)- \mathfrak{T}_0 (\dot{f}_1,  \bar{f}_1, \check{f}^{(2)}_1)|\leq C|\check{f}^{(1)}_1-\check{f}^{(2)}_1|.
\end{align*}
Then the  equalities in \eqref{eqaution 2} give that, for $j=1, \cdots, N$,
\begin{equation} \label{checkf}
\begin{split}
& 6 \sqrt 2 \epsilon  \beta^2 \Big[ - \gamma_{1,j} {\mathbf d}_{j} \big(e^{-  \sqrt 2 \beta (\check{f}_{j}-\check{f}_{j-1})}-1 \big)
+\gamma_{2,j}{\mathbf d}_{j+1} \big(  e^{-  \sqrt 2 \beta (\check{f}_{j+1}-\check{f}_j)}  -1 \big) \Big]
\\[2mm]
&+\epsilon^{\frac 32}  \mathfrak{T}_0 (\dot{f}_1,  \bar{f}_1, \check{f}_1)\delta_{1j}\, =\,  2\epsilon^{\frac 32}  \beta^2
 \Big[\big(  {\mathbf A}_{1j} + {\mathbf A}_{5j}  \big) \gamma_{3,j}  {\mathbf k}^2_j   +   \big( {\mathbf A}_{3j}+{\mathbf A}_{5j} \big) \gamma_{4,j}  {\mathbf k}^2_{j+1} \Big] +O(\epsilon^{2+\sigma}),
\end{split}
\end{equation}
where
\begin{align*}
\epsilon^{-\frac{1}{2} }\,\gamma_{3,j}   &\,=\, (-1)^j \int_\mathbb{R}  { \chi_j}  e^{-2\sqrt 2 \, x_j } H_{j,x_j}\,  {\mathrm{d}} x_j,
\\[2mm]
\epsilon^{-\frac{1}{2} }  \,  \gamma_{4,j}  &\,=\, (-1)^j \int_\mathbb{R}  { \chi_j}  e^{ 2\sqrt 2 \, x_j } H_{j,x_j}  \,  {\mathrm{d}} x_j,
\end{align*}
and $\gamma_{3,j}, \gamma_{4,j} $   are  positive functions  and of order  $O(1)$, functions ${\mathbf d}_j$, ${\mathbf k}_j$ for $j=1, \cdots, N+1$ are defined in \eqref{mathbfdn}, \eqref{mathbf kj}.
 By using the  similar method in solving system \eqref{berf} and combining with  Contraction Mapping Principle,  we can find solution $\check {\mathbf f} (\theta)= (\check{f}_1(\theta), \cdots, \check{f}_N(\theta))^{T} $  of \eqref{checkf} with the constraints in \eqref{checkfnorm}.

\medskip
Therefore, combining Lemma \ref{lemma of solve linearized pro11} and   orthogonal conditions \eqref{eqaution 2}, we can obtain that there exists a unique solution $\omega^{*}_j (x_j,z)$  satisfies
\begin{equation}\label{omega1}
\begin{split}
\omega^{*}_{j,zz}+ \beta^{2}\Big[\omega^{*}_{j,x_jx_j}+ {F'}(H_j)\omega^{*}_j\Big]=-    \epsilon^{-\frac{3}{2}}\Big[ & \epsilon \beta_1   \check{f}_j \Xi_{0,j}
+  \epsilon^{\frac{3}{2}}\mathcal{K}_3
\\
&+\epsilon  {\chi_j} \Xi^1_{5,j}  +  \epsilon^{\frac{3}{2}}  {\chi_j} \Xi_{11,j} \Big]
\quad \mbox{in }\mathbb{R}\times \Big[0,\frac{\ell}{\epsilon} \Big),
\end{split}
\end{equation}
\begin{equation}\label{omega2}
\omega^{*}_j(x_j, 0)=\omega^{*}_j\Big(x_j, \frac{\ell}{\epsilon}\Big),
\quad
\omega^{*}_{j,z}(x_j, 0)=\omega^{*}_{j,z}\Big(x_j, \frac{\ell}{\epsilon}\Big),
\end{equation}
\begin{equation}\label{omega3}
\int_\mathbb{R}{\omega^{*}_j H_{j, x_j}}\,{\mathrm{d}}x_j =0,\quad  0<z<\frac{\ell}{\epsilon}.
\end{equation}
 Then, we will choose the second correction term as
\begin{equation}\label{phi2}
\begin{split}
\epsilon^{\frac 32 } \,\phi_{2}(s,z)
\, :=\,&
\epsilon^{\frac 32 }\, \sum_{j=1}^N \,  \omega^{*}_j(x_j, z)+ \epsilon^{\frac 32 } \sum_{j=1}^N \omega^{*}_j(- x_j -2\beta f_j, z)
\\
=\,& \epsilon^{\frac 32 }\, \sum_{j=1}^N \,  \omega^{*}_j(\beta(s+f_j), z)+ \epsilon^{\frac 32 } \sum_{j=1}^N \omega^{*}_j(- \beta(s+f_j), z).
\end{split}	
\end{equation}
 Using the symmetry, we can get the boundary error for  $\phi_2$
\begin{equation}\label{boundary3}
  {	\mathcal  {D} }(\phi_2) \, =\,  0.
\end{equation}

\subsubsection{The third   approximate solution and its error}

We define the {\bf{third  approximate solution}} as
\begin{equation}\label{u3}
u_3 (s,z) \,  :  = \, u_2(s,z) + \epsilon^{\frac 32 }\, \phi_2(s,z).
\end{equation}
The error is now expressed as
\begin{equation}\label{Su3}
\begin{split}
{\bf S}(u_3) \,=\,&{\bf S}(u_2)+  L_{u_2}( \epsilon^{\frac 32 }\, \phi_2)
\,+\, N_{u_2}(\epsilon^{\frac 32 }\, \phi_2)   \,+\,B_3 ( \epsilon^{\frac 32 }\, \phi_2)
\\[1mm]
& \,+\,B_ 2(u_2+\epsilon^{\frac 32 }\, \phi_2)\,-\,B_2(u_2)
\,+\,B_4(u_2+\epsilon^{\frac 32 }\, \phi_2)\,-\,B_4(u_2),
\end{split}
\end{equation}
where
\begin{equation*}
L_{u_2} ( \epsilon^{\frac 32 } \phi_2)
\, =\, \epsilon^{\frac 32 } \phi_{2,zz} + \epsilon^{\frac 32 }  \,  \phi_{2,ss } \,+\,   \epsilon^{\frac 32 }  \beta^2 \, (1-3{u_2}^2)\phi_2,
\end{equation*}
and
\begin{equation*}
N_{u_2}(\epsilon^{\frac 32 }\, \phi_2)\, =\,  - \beta^2\Big[        3 u_2 \big (\epsilon^{\frac 32 }\, \phi_2\big)^2+\big (\epsilon^{\frac 32 }\, \phi_2\big)^3 \Big]=O(\epsilon^{2+\sigma}).
\end{equation*}

\medskip
According to the expression of $\epsilon^{\frac 32 } \phi_{2}$ as in \eqref{phi2}, we obtain that
\begin{align}\label{phi2zz+Lu2phi}
 L_{u_2} ( \epsilon^{\frac 32 } \phi_2)
 \,& =\, \epsilon^{\frac 32 }  \sum_{j=1}^N  \Big\{\omega^{*}_{j,zz} +  \beta^2 \big[  \omega^*_{j,x_jx_j} +  (1-3 H_j^2) \omega_j^*\big ]   \Big\}+B_5( \epsilon^{\frac 32 }  \phi_2)
+ \mathcal{K}_5+\mathcal{K}_6
\nonumber\\[1mm]
\,& = -\epsilon \beta_1 \sum_{j=1}^N \check{f}_j \Xi_{0,j}
 - \epsilon^{\frac{3}{2}} \mathcal{K}_3
- \epsilon \sum_{j=1}^N \chi_j \Xi^1_{5,j}
- \epsilon^{\frac{3}{2}}  \sum_{j=1}^N \chi_j \Xi_{11,j}
+B_5( \epsilon^{\frac 32 }  \phi_2)
+ \mathcal{K}_5+\mathcal{K}_6,
\nonumber
\end{align}
where
\begin{equation*}
\begin{split}
B_5( \epsilon^{\frac 32 }  \phi_2) \, &:= \,  \epsilon^{\frac 32 }  \beta^2 \sum_{j=1}^N   3( H_j^2 - u^2_2 ) \omega_j^* +  \epsilon^{ \frac 72 } \sum_{j=1}^N \omega^{*}_{j,x_jx_j} \Big[\frac{\beta'}{\beta}x_j - \beta f_j' \Big]^2
\\[1mm]
&\quad  + \epsilon^{\frac 72 } \sum_{j=1}^N \omega^{*}_{j,x_j}   \Big[ \frac{\beta''}{\beta}x_j  - 2\beta'f_j' -\beta f_j''\Big]
+ \epsilon^{ \frac 52 }\sum_{j=1}^N \omega^{*}_{j,zx_j} \Big[\frac{\beta'}{\beta} x_j- \beta f_j' \Big],
\end{split}
\end{equation*}
\begin{equation*}
\mathcal{K}_5
\,:=\,  \epsilon^{\frac 32 }  \sum_{j=1}^N \Bigg\{ \omega^{*}_{1,zz}(\tau_j, z)+  \beta^2 \Big[    \omega^{*}_{1,\tau_j \tau_j}(\tau_j, z ) + (1-3 H_1^2)\omega^{*}_1(\tau_j,z)  \Big]\Bigg\},
\end{equation*}
\begin{equation*}
\mathcal{K}_6\,:=\,   \epsilon^{\frac 32 } \, \sum_{j=1}^N  \Big\{ \omega^{*}_{j,\tau_j \tau_j}(\tau_j,z ) \tau_{j,z}^2  +2\omega^{*}_{j,\tau_j z} (\tau_j,z ) \tau_{j,z}
+  \omega^{*}_{1,\tau_j}(\tau_j,z)  \tau_{j, zz}  \Big\}=O(\epsilon^{2+\sigma}).
\end{equation*}
We can refer to \eqref{tauj}, \eqref{tauz}, \eqref{tauzz} for the expressions of $\tau_j, \tau_{j,z}, \tau_{j,zz}$.

\medskip
Recalling the definition of operators $ B_2,B_3,B_4$ in \eqref{B2}-\eqref{B4}, we  know that the terms
\begin{align*}
B_3 ( \epsilon^{\frac 32 }\phi_2)
+B_5( \epsilon^{\frac 32 }  \phi_2)
+B_2(u_2+\epsilon^{\frac 32 }\phi_2)-B_2(u_2)
+B_4(u_2+\epsilon^{\frac 32 }\, \phi_2)-B_4(u_2)
=O(\epsilon^{2+\sigma}).
\end{align*}

\medskip
According to the definition of $\hat{f_j}(\theta) $ as in \eqref{hatfj} and the expression of  $\epsilon^2 |\ln\epsilon| \, E_{04}$ in \eqref{E04}, we can decompose $\epsilon^2 |\ln\epsilon| \, E_{04}$ as follows
\begin{equation*}
\epsilon^2 |\ln\epsilon|E_{04}
=\epsilon^2   |\ln\epsilon|    \sum_{j=1}^N \Xi_{13,j}
+\epsilon^2 \sum_{j=1}^N   \Xi_{14,j},
 \end{equation*}
with
\begin{equation}\label{Xi14j}
\begin{split}
\epsilon^2  |\ln\epsilon|  \Xi_{13,j}
& := \, \epsilon^2\, \Big[  -2 \beta'(\dot{f}'_j + \bar{f}'_j + \check{f}'_j    )
- \beta  ( \dot{f}''_j + \bar{f}''_j + \check{f}''_j    )\Big ]H_{j,x_j}
 \\[2mm]
 &\quad-2\epsilon^2  \beta'(   \dot{f}'_j+ \bar{f}'_j + \check{f}'_j  ) x_jH_{j,x_jx_j}
-\epsilon^2  k^2  \beta   (  \dot{f}_j+ \bar{f}_j + \check{f}_j    )  \, H_{j,x_j}
 \\[2mm]
 &\quad
+ \epsilon^2\beta^{-1}  \beta_2    ( \dot{f}_j + \bar{f}_j + \check{f}_j   )x_j F(H_j),
\end{split}
\end{equation}
and
\begin{equation}\label{Xi13j}
\begin{split}
\epsilon^2   \Xi_{14,j}
& := \, \epsilon^2 \big(-2 \beta'  {\tilde f}'_j  - \beta   {\tilde f}''_j   \big )H_{j,x_j}
-2\epsilon^2  \beta' {\tilde f}'_j\, x_jH_{j,x_jx_j}
 \\[2mm]
&\quad  \, -\epsilon^2  k^2  \beta     {\tilde f}_j\, H_{j,x_j}  + \epsilon^2  \beta^{-1}  \beta_2  {\tilde f}_j \, x_j F(H_j) .
\end{split}
\end{equation}

\medskip
According to the expression of $u_1$ in \eqref{expressionofu1}, we can obtain,  for $(s,z)\in {\mathfrak A}_j$, $j\, =\, 1,\cdots,N $
\begin{equation*}
\begin{split}
&H_j^2- u_1^2 \, =\, H_j^2 - \Big[ (-1)^{j} u_1 \Big]^2
 \\[1mm]
 \, &=\, H_j^2 - \Big[   H\big(\, \beta(s-f_j)\big)  -{\mathbf b}_{1j} +{\mathbf b}_{2j}-{\mathbf b}_{3j}+{\mathbf b}_{4j} +O(\epsilon^{2+\sigma}) \Big]^2
 \\[1mm]
  \, &=\,  2 H(x_j) \Big[ {\mathbf b}_{1j}-{\mathbf b}_{2j}+{\mathbf b}_{3j}-{\mathbf b}_{4j}         \Big] + O(\epsilon^{1+\sigma}).
\end{split}
\end{equation*}
Therefore, we can write it as follows
\begin{align*}
&3 \, \epsilon\beta^2 \sum_{j=1}^{N}  \,    (  H_j^2 -  u_1^2) \Big[\beta_1 \, \beta^{-2}\,\dot {f}_j \, \psi_j+\psi_j^*+ \psi^{**}_j \Big]
\\
&\,=\,  6\,\epsilon\, \beta^2\,\sum_{j=1}^{N}  H(x_j) ( {\mathbf b}_{1j}+ {\mathbf b}_{3j}   )   \Big[ \beta_1 \beta^{-2} \dot{f}_j  \psi_j+\psi_j^*+ \psi^{**}_j \Big]
+O(\epsilon^{2+\sigma})
\\
&=\, 12 \, \epsilon^2 \beta^2 \sum_{j=1}^{N}   H(x_j)\Big\{-  {\mathbf k}_j  e^{-\sqrt 2 x_j } e^{-  \sqrt 2\beta ({\tilde f}_{j}-{\tilde f}_{j-1})}
\,+{\mathbf k}_{j+1}  e^{\sqrt 2 x_j } e^{-2\sqrt{2} \beta({\tilde f}_{j+1} -{\tilde f}_{j}  )}  \Big\}
\\
& \qquad \qquad \qquad \qquad \quad
\times\Big[ \beta_1\dot{f}_j \beta^{-2} \psi_j+\psi_j^*+ \psi^{**}_j \Big]  +O(\epsilon^{2+\sigma})
\\
&= \,12\,\epsilon^2 \, \beta^2\,\sum_{j=1}^{N}H(x_j)  \Big[ -{\mathbf k}_j  e^{-\sqrt 2 x_j }
  +   {\mathbf k}_{j+1}    e^{\sqrt 2 x_j }          \Big] \times \Big[ \beta_1 \dot{f}_j  \beta^{-2} \psi_j+\psi_j^*+ \psi^{**}_j \Big]
\\
&\quad +12 \epsilon^2 \beta^2 \sum_{j=1}^{N}  H(x_j) \Big\{-  {\mathbf k}_j  e^{-\sqrt 2 x_j } \big[e^{-  \sqrt 2\beta ({\tilde f}_{j}-{\tilde f}_{j-1})} -1 \big]+{\mathbf k}_{j+1} e^{\sqrt 2 x_j }  \big[ e^{-2\sqrt{2} \beta({\tilde f}_{j+1} -{\tilde f}_{j}  )}  -1\big] \Big\}
\\
&\qquad \qquad \qquad   \qquad \qquad
\times\Big[ \beta_1\, \dot{f}_j \beta^{-2}\, \psi_j+\psi_j^*+ \psi^{**}_j \Big]  +O(\epsilon^{2+\sigma})
\\[1mm]
&= \, \epsilon^2 \, |\ln \epsilon| \, \sum_{j=1}^{N}  \Xi_{15,j} \,+\,  O(\epsilon^{2+\sigma}),
\end{align*}
with
\begin{equation}\label{Xi15}
 \epsilon^2  |\ln \epsilon|    \Xi_{15,j}
 := 12\epsilon^2  \beta^2\sum_{j=1}^{N}H(x_j) \Big( -{\mathbf k}_j  e^{-\sqrt 2 x_j }
  +{\mathbf k}_{j+1}e^{\sqrt 2 x_j }   \Big) \Big[ \beta_1 \dot{f}_j  \beta^{-2} \psi_j+\psi_j^*+ \psi^{**}_j \Big].
\end{equation}

\medskip
Combining with \eqref{S(u21)},  \eqref{Su3}, and the expressions for
$$L_{u_2} ( \epsilon^{\frac 32 } \phi_2),  \quad    \epsilon^2 |\ln\epsilon|E_{04},    \quad   3 \, \epsilon\beta^2 \sum_{j=1}^{N}  \,    (  H_j^2 -  u_1^2) \Big(\beta_1 \,\dot {f}_j \, \beta^{-2}\, \psi_j+\psi_j^*+ \psi^{**}_j \Big)   $$
 we can obtain
\begin{align}\label{S(u3)}
 {\bf S} (u_3) \, = \,&  \epsilon \,\beta_1\sum_{j=1}^{N}  {\tilde f}_j \Xi_{0,j}
  \,+\,\epsilon \sum_{j=1}^N  \chi_j \Xi_{5,j}^2
   \,+\,\epsilon^2 |\ln\epsilon| \sum_{j=1}^{N}\Xi_{2,j}
   \,+\,  \epsilon^2 |\ln \epsilon|   \sum_{j=1}^{N} \chi_j\Xi_{6,j}
      \nonumber
\\[1mm]
&  \,+\, \epsilon^2  |  \ln \epsilon|  \,  \sum_{j=1}^N  \Xi_{7,j}  +  \epsilon^2   \sum_{j=1}^N  \Xi_{8,j}
\,+\,  \epsilon^2  |\ln \epsilon | \sum_{j=1}^N  \Xi_{9,j}
\,+\, \epsilon^2 |\ln \epsilon| \sum_{j=1}^N \Xi_{10,j}
\\[2mm]
 \quad  &
 \,+\, \epsilon^2 |\ln\epsilon|   \sum_{j=1}^N  \chi_j \Xi_{12,j}
 \,+\,\epsilon^2   |\ln\epsilon|    \sum_{j=1}^N \Xi_{13,j}
 \,+\,\epsilon^2 \sum_{j=1}^N   \Xi_{14,j}
 \,+\,\epsilon^2  |\ln \epsilon|  \sum_{j=1}^{N}  \Xi_{15,j}
\nonumber
\\[2mm]
 \quad  & \,+\,\epsilon^2 |\ln\epsilon|^2 \, E_{03} \,+\, \epsilon^2 E_{06} \,+\,  N_{u_1}(\phi_1)  \,+\,  \mathcal{Q}_1\,+\, \bar{ \mathcal{K}},  \nonumber
 \end{align}
 with
 \begin{align}\label{mathcalK}
\bar{ \mathcal{K}}\,:=\,  \mathcal{K}_2 \,+\, \mathcal{K}_4 \,+\,\mathcal{K}_5\,+\,\mathcal{K}_6,
 \end{align}
 and
 \begin{equation}\label{mathcalQ1}
 \begin{split}
  \mathcal{Q}_1 := \,&  B_5( \epsilon^{\frac 32 }  \phi_2)  + N_{u_2}(\epsilon^{\frac 32 }\, \phi_2)   \,+B_3 ( \epsilon^{\frac 32 }\, \phi_2)   \,+\,B_ 2(u_2+\epsilon^{\frac 32 }\, \phi_2)
\\[1mm]
 \quad  &
 \,-\,B_2(u_2)
\,+\,B_4(u_2+\epsilon^{\frac 32 }\, \phi_2)\,-\,B_4(u_2) \,+\, O(\epsilon^{2+\sigma})
\\
=\,&O(\epsilon^{2+\sigma}).
 \end{split}
 \end{equation}

In the last of this part, we will give the  Neumann   boundary  error of the third approximate solution.
By calculating and combining the results \eqref{boundary1},  \eqref{boundary2}, \eqref{boundary3},    we can get
\begin{align}\label{Du4}
   {\mathcal{D} } (u_3)\,=\, 0.
 \end{align}


\section{The gluing procedure}
\label{section4}
\setcounter{equation}{0}

Recall the set ${\mathfrak S}$ in \eqref{huaxies} and the coordinates $(s, z)$ in \eqref{sz}.
The gluing method from \cite{delPKowWei1} will be used to transform problem \eqref{originalproblem1} in $\Omega_{\epsilon}$
into a projected problem on the infinite strip ${\mathfrak S}$, in which we can use the coordinates $(s, z)$.
We define a smooth cut-off function
$
\eta_{3\delta}^{\epsilon}(s)\,=\, \eta_{3\delta}(\epsilon |s|),
$
where $\eta_{\delta}(t)$ is given by
$$
\eta_{\delta}(t)=1\ \mbox{ for }\ 0\leq t\leq \delta
\qquad \mbox{and}\qquad
\eta_{\delta}(t)=0
\ \mbox{ for }\  t>2\delta,
$$
for any fixed number $\delta<\delta_0/100$ with  small $\delta_0$ given in (\ref{fermi}). For a solution in Theorem \ref{main1},
the global approximation   $\mathbf{H}({\tilde y})$ can be defined simply in the form
\begin{equation}\label{globalapproximation1}
\mathbf{H}({\tilde y})=
\eta_{3\delta}^\epsilon(s)\big(u_3(s,  z)+  (-\, 1)^{N+1}\big)\, +  (-\, 1)^N ,\quad \forall\, {\tilde y}\in \Omega_{\epsilon},	
\end{equation}
where $u_3$ is defined as in \eqref{u3} and the relation between ${\tilde y}$ and $(s,z)$ is given in \eqref{sz}.

\medskip
For a perturbation term $\Phi(\tilde y)\,=\, \eta_{3\delta}^{\epsilon}{\tilde \phi}+{\tilde \psi}$ defined in $\Omega_\epsilon$,  the function $u(\tilde y)=\mathbf{H}(\tilde y)+\Phi(\tilde y)$
satisfies (\ref{originalproblem1}) if the pair $({{\tilde \phi}}, {{ \tilde \psi}})$ satisfies the following coupled system:
\begin{align}
\eta_{3\delta}^{\epsilon}\mathbf{L}(\tilde \phi)
\,=\, &
\eta_{\delta}^{\epsilon}\Big[-\mathcal{E}
+\mathbf{N}(\eta_{3\delta}^{\epsilon}
{\tilde \phi}+{\tilde \psi})-3\,  V \, (1-\mathbf{H}^2)\, {\tilde \psi}\Big]\quad\mbox{in } \Omega_\epsilon,
\label{equivalent-system-1}
\end{align}
\begin{equation}\label{system-boundary-1}
\frac {\partial {\tilde\phi}}{\partial\nu}\, +\, \frac {\partial \mathbf{H}}{\partial\nu}\,=\, 0\quad\mbox{on } \partial\, \Omega_\epsilon,
\end{equation}
and
\begin{align}
\begin{aligned}
\Delta {\tilde \psi}-2V{\tilde \psi}+3V(1-\eta_{\delta}^{\epsilon})(1-\mathbf{H}^2){\tilde \psi}
\,=\, &
-\epsilon^2 (\Delta \eta_{3\delta}^{\epsilon}){ \tilde \phi}-2\epsilon \nabla \eta_{3\delta}^{\epsilon} \cdot
\nabla{\tilde \phi}
-(1-\eta_{\delta}^{\epsilon})\mathcal{E}
\\[2mm]
&+(1-\eta_{\delta}^{\epsilon})\mathbf{N}(\eta_{3\delta}^{\epsilon}
{\tilde \phi}+{\tilde \psi})\quad\mbox{in } \Omega_\epsilon,
\label{equivalent-system-2}
\end{aligned}
\end{align}
\begin{equation}
\label{system-boundary-2}
\frac {\partial\,  {{\tilde \psi}}}{\partial\nu}\,=\, 0
\quad\mbox{on } \partial\, \Omega_\epsilon,
\end{equation}
where
\begin{align*}
\mathbf{L}(\tilde \phi)\,=\, \Delta \tilde \phi\, +\, V& \,  \big[1-3\mathbf{H}^2 \big]\, \tilde \phi,
\\[2mm]
\mathcal{E}\,=\, \Delta \mathbf{H}\, +V \,  \mathbf {H} \, (1-\mathbf{H}^2),
&\qquad
\mathbf{N}({\tilde \phi})\,=\, V {\tilde \phi}^3\, +\, 3 V \, \mathbf{H}\, {\tilde \phi}^2\,.
\end{align*}

\medskip
First,  given a small $ \tilde {\phi}$ satisfies the following decay property
\begin{equation}\label{decay-assumption}
|\nabla {\tilde \phi}|\, +\, |{\tilde \phi}|\, \leq\,  e^{- { \upsilon_1\,  \delta}/{\epsilon}} \, \, \, \,   {\rm for }\, \, \,  s>\delta/\epsilon,
\end{equation}
where $\upsilon_1$ is a very small constant. Note that  the nonlinear operator $\mathbf{N}$ has a power-like behavior with power greater than one, and a direct application of Contraction Mapping Principle yields
that (\ref{equivalent-system-2})-(\ref{system-boundary-2}) has a unique (small) solution ${\tilde \psi}={\tilde \psi}({\tilde \phi})$ with
\begin{equation}\label{contraction-varphi}
\|{\tilde \psi}({ \tilde \phi})\|_{L^{\infty}}\, \leq \, C\, \epsilon \lf[\|{\tilde \phi}\|_{L^{\infty}(s>\delta/\epsilon)}
\, +\, \|\nabla {\tilde \phi}\|_{L^{\infty}(s>\delta/\epsilon)}\ri]
\, +\,
e^{-\frac { \upsilon_2 \delta}{\epsilon}},
\end{equation}
where $s>\delta/\epsilon$ denotes the complement  of $\delta/\epsilon$-neighborhood of
$\partial\, \Omega_\epsilon$ and $\upsilon_2$ denotes some small constant. Moreover,   the nonlinear operator $\tilde {\psi}$ satisfies a Lipschitz condition of the form
\begin{equation}\label{varphi-lip}
\|{\tilde \psi}({\tilde \phi}_1)-{\tilde \psi}({ \tilde \phi}_2)\|_{L^{\infty}}
\leq
C\epsilon \lf[ \|{\tilde \phi}_1-{\tilde \phi}_2\|_{L^{\infty}(s>\delta/\epsilon)}
+\|\nabla {\tilde \phi}_1-\nabla {\tilde \phi}_2\|_{L^{\infty}(s>\delta/\epsilon)}\ri].
\end{equation}

\medskip
Therefore, after solving (\ref{equivalent-system-2})-\eqref{system-boundary-2},
we can concern (\ref{equivalent-system-1})-\eqref{system-boundary-1} as  a local nonlinear problem  involving ${\tilde \psi}={ \tilde \psi}({\tilde \phi})$,
which can be expressed in local coordinates $(s,  z)$.
Here is the setting-up.
\\
\\
\noindent(1).
Note first that $\frac {\partial \mathbf{H}}{\partial\nu}=0$ due to \eqref{Du4}.
The boundary condition in \eqref{system-boundary-1} has the form
\begin{align}
    {\tilde \phi}_s(0,z)  \, =\,  0,    \quad 0< z< \frac{\ell}{\epsilon}.
\end{align}

\noindent(2).
We make an extension and consider the function $\tilde \phi$ defined in the strip  ${\mathfrak S}$.
In terms of $ (s,z) $, the operator ${\mathbf L}$ can also be extended to a new one, say ${\mathcal L}$,
so that they coincide on the region $0<s< \frac{10 \delta}{\epsilon}$.
In fact, by using the computations in Section \ref{section2}, we set
\begin{align}\label{operator mathbf L}
{\mathcal L}({ \tilde \phi})
\, &=\,
{\mathcal L}_0({ \tilde \phi})
\,+\,
\eta^{\epsilon}_{10 \delta}(s)  \Big[ \, B_1(\tilde \phi) + B_6(\tilde \phi) \Big],
\end{align}
where
\begin{align}
\label{tilde L check phi}
{\mathcal L}_0({ \tilde \phi})
\,=\,{\tilde \phi}_{ss}\,+\,{\tilde \phi}_{zz}
\,+\, \beta^2 \, (1-3\mathbf{H}^2)\, \tilde \phi,
\end{align}
and $B_6(\tilde \phi)$ is defined
\begin{equation}
B_6(\tilde \phi)
\,=\,
\Big[ \epsilon \beta_1 s + \frac{\epsilon^2}{2} \beta_2  s^2  +a_4(\epsilon s, \epsilon x) \epsilon^3 s^3      \Big]
\big(1 - 3 \mathbf{H}^2\big) \tilde \phi,
\end{equation}
where $B_1$ and $a_4$ are given in \eqref{B1u} and \eqref{Vexpansion}.

\medskip
\noindent(3).
For the local form of the nonlinear part,  we denote by the notation
\begin{align}\label{Nmathcal}
{\mathcal N}(\tilde\phi)={\mathbf N}({\eta_{3\delta}^{\epsilon}{\tilde\phi}}+{\tilde\psi}({\tilde\phi}))
-3V(1-\mathbf{H}^2)\tilde \psi({\tilde\phi}).
\end{align}

\noindent(4).
The term ${ \mathcal{E}}$ can be locally recast in $(s, z)$  coordinate system   by the relation
\begin{equation}\label{errorrelationinterior}
\eta_\delta^\epsilon(s)\, {\mathcal{E}}\,=\, \eta_\delta^\epsilon(s)\, {\bf S}(u_3),
\end{equation}
where the clear expression of ${\bf S}(u_{3})$,  reader can refer to \eqref{S(u3)}.

\medskip
Note that the approximate solution $u_3$ has unknown parameters ${\tilde f}_1,\cdots,{\tilde f}_N$,  see \eqref{hatfj} and \eqref{ddotfnorm}.
We will deal with the following projected problem:
for $ {\tilde{\bf f}}=({\tilde f}_1, \cdots, {\tilde f}_N)^T$,  finding
functions $\tilde \phi\in H^2({\mathfrak S}), \,  \mathbf{c}=(c_1, \cdots, c_N)$ with $c_j \in L^2(0, \ell)$,
such that
\begin{equation}
\label{Projectedproblem1}
{\mathcal L}(\tilde \phi)\,=\, \eta_{\delta}^{\epsilon}(s)\,
\big[-{\mathcal E}+{\mathcal N}(\tilde \phi)\, \big]
\,+\, \sum_{j=1}^N\, c_j(\epsilon z)\chi_j(s, z)\,  H'\big(    \beta(s-f_j)\big)\quad\mbox{in } {\mathfrak S},
\end{equation}
\begin{equation}
{\tilde \phi }(s,0 ) = {\tilde \phi }\Big(s,    \frac{\ell}{\epsilon} \Big), \quad {\tilde \phi }_z(s,0 ) = {\tilde \phi }_z\Big(s,    \frac{\ell}{\epsilon} \Big),
\quad
0<s<+\infty,
\end{equation}
\begin{equation}
    {\tilde \phi}_s(0,z)  \, =\,  0,    \quad 0< z< \frac{\ell}{\epsilon},
   \end{equation}
\begin{equation}
\label{projectedproblem3}
\int_0^{+\infty} \tilde \phi(s, z)\, \chi_j(s, z)\, H'\big(    \beta(s-f_j)  \big) \, {\rm d}s \,=\, 0, \quad 0<z<\frac {\ell }{\epsilon}, \quad j=1, \cdots, N.
\end{equation}
The smooth cut-off functions $\chi_1,\cdots,\chi_N$ are defined by (\ref{cutoffchi}).
The resolution theory for $\tilde \phi$ with the constraint (\ref{decay-assumption}) can be provided in the following:
\begin{proposition}\label{proposition3.1}
There exist  numbers $D>0$,  $ \upsilon_3>0$ such that for all sufficiently small $\epsilon$ and all ${\tilde{\bf f}} =({\tilde f}_1, \cdots, {\tilde f}_N)^T $
satisfying \eqref{ddotfnorm},  problem \eqref{Projectedproblem1}-\eqref{projectedproblem3}
has a unique solution $\tilde \phi\, =\, \tilde \phi({\tilde{\bf f}})$ which satisfies
\begin{align}\label{tilde phi 1}
\|\tilde \phi\|_{H^2({\mathfrak S})} \, \le\,  D \epsilon^{\frac{3}{2}}|\ln\epsilon|^2,
\end{align}
\begin{align} \label{tilde phi 2}
\|\tilde \phi\|_{L^\infty(s>\delta/\epsilon)} +\|\nabla  {\tilde\phi}\|_{L^\infty(s>\delta/\epsilon)}\  \le\ e^{\, -{\upsilon_3 \delta}/{\epsilon}}.
\end{align}
Besides $\tilde \phi$ is a Lipschitz function of   $\, {\tilde{\bf f}}$,
and for given ${\tilde{\bf f}}_1, {\tilde{\bf f}}_2:(0, \ell )\to {\mathbb R}^N$ with constraint in \eqref{ddotfnorm}
there holds
\begin{align}\label{flip2}
\|\tilde \phi({\tilde{\bf f}}_1)-\tilde \phi({\tilde{\bf f}}_2)\|_{H^2({\mathfrak S})}\leq C\epsilon \,  \|{\tilde{\bf f}}_1-{\tilde{\bf f}}_2\|_{H^2(0,\ell)}.
\end{align}
\end{proposition}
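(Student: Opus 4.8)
The plan is to treat \eqref{Projectedproblem1}--\eqref{projectedproblem3} as a fixed point problem for $\tilde\phi$ alone. The coupling to $\tilde\psi$ has already been eliminated: by \eqref{contraction-varphi}--\eqref{varphi-lip} the map $\tilde\psi=\tilde\psi(\tilde\phi)$ is well defined and Lipschitz, so the nonlinear term ${\mathcal N}(\tilde\phi)$ in \eqref{Nmathcal} is a genuine (small, superlinear) function of $\tilde\phi$; using \eqref{errorrelationinterior} the right-hand side becomes $\eta^\epsilon_\delta(s)\big[-{\bf S}(u_3)+{\mathcal N}(\tilde\phi)\big]+\sum_j c_j\chi_j H'\big(\beta(s-f_j)\big)$. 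Thus everything reduces to (i) a linear solvability theory, namely a bounded right inverse for ${\mathcal L}$ subject to the orthogonality conditions \eqref{projectedproblem3} with the $c_j$'s as Lagrange multipliers, and (ii) a contraction argument using that the error ${\bf S}(u_3)$ is, by the computations of Section~\ref{section3} (see \eqref{S(u3)}), of size $O(\epsilon^{3/2}|\ln\epsilon|^2)$ in the norm $\|\cdot\|_{H^2({\mathfrak S})}$ --- the factor $\epsilon^{-1/2}$ beyond the pointwise order $\epsilon^2|\ln\epsilon|^2$ arising from integrating the layer-localized error over the $z$-extent $[0,\ell/\epsilon)$.

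For the linear theory I would first record the model facts: the one-dimensional operator $L_0\phi:=\phi_{xx}+(1-3H^2)\phi$ has one-dimensional kernel spanned by $H_x$ and, on the $L^2$-orthogonal complement of $H_x$, admits an inverse obeying exponentially weighted bounds. Away from the layers the potential $\beta^2(1-3\mathbf H^2)$ is close to $-2\beta^2<0$, so ${\mathcal L}_0$ is there uniformly coercive, while near each $s=f_j$ it is a perturbation of $L_0$ in the variable $x_j=\beta(s-f_j)$, the lower-order terms $B_1,B_6$ contributing only $o(1)$. Combining Lemma~\ref{lemma of solve linearized pro11} (which supplies the single-layer $2$D linear theory on $\mathbb R\times[0,\ell/\epsilon)$) over the $N$ layers with the coercive far region, and running a blow-up/contradiction argument --- assume $\|\tilde\phi\|_{H^2({\mathfrak S})}=1$ while the data and the $c_j$'s tend to $0$, extract weak limits near each layer and in the far region, obtain bounded entire solutions of the limiting equations that are $L^2$-orthogonal to $H_x$, hence identically zero, a contradiction --- yields the a priori estimate $\|\tilde\phi\|_{H^2({\mathfrak S})}\le C\big(\|h\|_{**}+\sum_j\|c_j\|_{L^2(0,\ell)}\big)$; together with Fredholm's alternative (equivalently, minimizing the quadratic form on the subspace orthogonal to the $\chi_jH'(\beta(s-f_j))$) this gives existence and uniqueness of the pair $(\tilde\phi,{\bf c})$. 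For the far-field bound \eqref{tilde phi 2} I would argue separately by barriers: on $\{s>\delta/\epsilon\}$ the equation for $\tilde\phi$ reduces to $\partial_{ss}+\partial_{zz}-2\beta^2$ acting on exponentially small data, so supersolutions of the form $e^{-\upsilon_3 s}$ together with interior elliptic estimates control $|\tilde\phi|+|\nabla\tilde\phi|$.

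With the bounded right inverse ${\mathcal L}^{-1}$ in hand, define ${\mathcal T}(\tilde\phi):={\mathcal L}^{-1}\big(\eta^\epsilon_\delta(-{\bf S}(u_3)+{\mathcal N}(\tilde\phi))\big)$, together with the accompanying multipliers ${\bf c}={\bf c}(\tilde\phi)$. Since $\|{\bf S}(u_3)\|\le C\epsilon^{3/2}|\ln\epsilon|^2$ and ${\mathcal N}$ is superlinear and small (the bound \eqref{contraction-varphi} turns the $\tilde\psi$-contribution into a genuine higher-order term), ${\mathcal T}$ maps the ball $\{\|\tilde\phi\|_{H^2({\mathfrak S})}\le D\epsilon^{3/2}|\ln\epsilon|^2\}$, intersected with the far-field decay class, into itself and is a contraction there for $D$ large and $\epsilon$ small; Banach's fixed point theorem produces the unique $\tilde\phi=\tilde\phi({\tilde{\bf f}})$ satisfying \eqref{tilde phi 1}, and the barrier step gives \eqref{tilde phi 2} a posteriori. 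Finally, for \eqref{flip2} one observes that $u_3$ and hence ${\bf S}(u_3)$, the potential in ${\mathcal L}$ (through $\mathbf H$), the cut-offs $\chi_j$, and ${\mathcal N}$ all depend on ${\tilde{\bf f}}$ only through $f_j=\dot f_j+\bar f_j+\check f_j+{\tilde f}_j$ and do so Lipschitz-continuously; subtracting the two fixed-point identities and using the uniform invertibility of ${\mathcal L}$ gives $\|\tilde\phi({\tilde{\bf f}}_1)-\tilde\phi({\tilde{\bf f}}_2)\|_{H^2({\mathfrak S})}\le C\epsilon\,\|{\tilde{\bf f}}_1-{\tilde{\bf f}}_2\|_{H^2(0,\ell)}$, the extra power of $\epsilon$ coming from the $\epsilon$-prefactors of the ${\tilde{\bf f}}$-dependent error terms in \eqref{S(u3)}, e.g. $\epsilon\,\chi_j\Xi_{5,j}^{2}$.

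The main obstacle I anticipate is the uniform linear a priori estimate: one has to control the mutual interaction of the $N$ clustered layers, whose spacing is only $O(\epsilon|\ln\epsilon|)$, and to glue this to the coercive far region without losing constants as $\epsilon\to0$ --- which is precisely why the projection onto $\mathrm{span}\{\chi_j H'(\beta(s-f_j))\}$ and the careful choice of weighted norms are indispensable. By contrast no resonance intervenes at this stage; the resonance is postponed to the reduced equation for ${\tilde{\bf f}}$ treated in Section~\ref{section7}.
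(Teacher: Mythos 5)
Your proposal follows exactly the route the paper takes: the authors simply invoke the linearized theory and contraction-mapping scheme of del Pino--Kowalczyk--Wei (Proposition 5.1 of \cite{delPKowWei2}), and what you have written out --- projected linear solvability via blow-up, gluing the layer regions to the coercive far field, barriers for exponential decay, Banach fixed point on the ball of radius $D\epsilon^{3/2}|\ln\epsilon|^2$, and Lipschitz dependence by subtracting fixed-point identities --- is precisely the content of that cited argument, including the correct accounting of the $\epsilon^{-1/2}$ factor from the $z$-integration. No discrepancy with the paper's (deferred) proof.
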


\vspace{1mm}
\begin{proof}
 Based on  linearized theory developed in \cite{delPKowWei2}  and contraction mapping theorem,
we can prove  Proposition \ref{proposition3.1} as done for Proposition 5.1 in \cite{delPKowWei2}.
The details of the proof will be omitted here.
\end{proof}

 \medskip
   \section{The deriving of the reduced equations}\label{section6}
As a standard step in reduction method to make the  Lagrange multiplier ${\mathbf c}$ vanish in \eqref{Projectedproblem1}, we will set up a system of differential equations involving ${\tilde{\bf f}}$ with constraints in \eqref{ddotfnorm}
 such that
\begin{equation}\label{equivalancec}
\mathbf{c}({\tilde{\bf f}})\,=\,  0.
\end{equation}
These equations are obtained by simply integrating the equation (\ref{Projectedproblem1})(only in $s$) against of the functions
$$H_{n, x_n}= (-1)^n H' \big( \,  \beta(s-f_n )       \,  \big),
\quad
n=1,\cdots, N.
$$
In fact,
it is easy to derive that \eqref{equivalancec} is equivalent to the following relations
   \begin{align}\label{equivalancec1}
\int_0^{+\infty}
\Big[ \eta_{\delta}^{\epsilon}(s){\mathcal E}(s,z)  - \eta_{\delta}^{\epsilon}(s){\mathcal N}(\tilde \phi)\,+{\mathcal L}(\tilde \phi) \Big]     H_{n, x_n} \,{\mathrm{d}}s\, =\, 0  \quad \text{for } n\, =\, 1,\cdots,  N.
   \end{align}
We will give the details of computations for the terms in \eqref{equivalancec1} in the sequel.

\medskip
To start the computations for the first term in \eqref{equivalancec1},
by recalling the definitions of   ${\mathfrak A}_n$   in  \eqref{{mathfrak A}_1}-\eqref{{mathfrak A}_n}, for each $ n $ with $ n = 1, \cdots, N, $
we introduce the notation
\begin{equation}
\label{mathfrakSn}
{\mathfrak R}_n=\Big\{      s\in    (0,+\infty) : (s,z) \in   {\mathfrak A}_n\Big\}.
\end{equation}
Then, we will consider the integrals for $ n = 1, \cdots, N$,
\begin{equation}
\begin{split}
\int_0^{+\infty}       \, \eta_{\delta}^{\epsilon}(s)\,
 {\mathcal E} (s,z)\,     H_{n, x_n} \,{\mathrm{d}}s
  \,=\,&    \Big\{     \int_ { {\mathfrak R}_n} +   \int_ {  \mathbb R^{+}   \setminus  {\mathfrak R}_n} \Big\}    \eta_{\delta}^{\epsilon}(s)\,
 {\mathcal E} (s,z)\,   H_{n, x_n} \,{\mathrm{d}}s
 \\[2mm]
 \, := \,&  \mathbb{E}_{n1}\,+\, \mathbb{E}_{n2}.
\end{split}
\end{equation}
Note  that, if $(s,z)\in {\mathfrak R}_n$, we have
 \begin{equation*}
\eta_\delta^\epsilon(s)\, {\mathcal{E}}(s,z)\,=\,  {\bf S}(u_{3}).
\end{equation*}
By recalling $ x_n = \beta(s-f_n)$, then we obtain that

\begin{align*}
  \mathbb{E}_{n1} \,
  & =   \epsilon  \sum_{j=1}^{N} \int_ {{\mathfrak R}_n}    \beta_1   {\tilde f}_j \Xi_{0,j} H_{n, x_n}\,{\mathrm{d}}s
  +  \epsilon   \sum_{j=1}^N   \int_ {{\mathfrak R}_n}  \chi_j \Xi_{5,j}^2     H_{n, x_n}\,{\mathrm{d}}s
  + \epsilon^2   \sum_{j=1}^N   \int_ {{\mathfrak R}_n}     \Xi_{14,j}        H_{n, x_n}\,{\mathrm{d}}s    \nonumber
\\[1mm]
 &  \quad
+\sum_{j=1}^N\int_ {{\mathfrak R}_n}\Big\{\epsilon^2 |\ln\epsilon| \Xi_{2,j}
+ \epsilon^2 |\ln \epsilon|\chi_j \Xi_{6,j}
+  \epsilon^2 |\ln \epsilon|\,  \Xi_{7,j}+  \epsilon^2 \,  \Xi_{8,j}
+  \epsilon^2 \,  |\ln \epsilon | \, \Xi_{9,j}     \nonumber
\\[1mm] &
\qquad \qquad \qquad
+  \epsilon^2   |\ln \epsilon |    \Xi_{10,j}
+  \epsilon^2|\ln \epsilon |\chi_j\Xi_{12,j}
+ \epsilon^2   |\ln\epsilon| \Xi_{13,j}
+ \epsilon^2  |\ln \epsilon| \Xi_{15,j} \Big\}  H_{n, x_n}\,{\mathrm{d}}s
\\[1mm]
& \quad  \,+\,\epsilon^2|\ln\epsilon|^2  \int_ {{\mathfrak R}_n}  E_{03}  H_{n, x_n}\,{\mathrm{d}}s
\,+\,\epsilon^2\int_ {{\mathfrak R}_n}  E_{06}  H_{n, x_n}\,{\mathrm{d}}s \nonumber
 \\[1mm]
& \quad
\,+\, \int_ {{\mathfrak R}_n}  N_{u_1}(\phi_1) H_{n, x_n}\,{\mathrm{d}}s
\,+\,\int_ {{\mathfrak R}_n}\bar{ \mathcal{K}} H_{n, x_n}\,{\mathrm{d}}s
\,+\,\int_ {{\mathfrak R}_n}   \mathcal{Q}_1  H_{n, x_n}\,{\mathrm{d}}s
\nonumber
\\[2mm]
& = I^n_0 \,+\, I^n_1\,+\,I^n_2\,+\,I^n_3\,+\,I^n_4\,+\,I^n_5\,+\,I^n_6\,+\,I^n_7\,+\,I^n_8.
 \end{align*}
For ${\tilde{\bf f}}$ satisfying \eqref{ddotfnorm}, we denote uniformly
bounded continuous functions of the form
\begin{equation*}
 \mathfrak{Z}_n^0 (  {\tilde{\bf f}},  {\tilde{\bf f}}',  {\tilde{\bf f}}''  )(\theta)
\quad \text{and} \quad
 \mathfrak{Z}_n^j(  {\tilde{\bf f}} )(\theta), \quad  \text{for}~ j=1,\cdots,8,
\end{equation*}
and    uniformly
bounded continuous functions  independent of $ {\tilde{\bf f}}$  of the form
\begin{equation*}
G_n^j(\theta), \qquad j= 1,\cdots,7.
\end{equation*}
 In the sequel, we will give the estimates of the above terms one by one.

\medskip
  By using the constraint of ${\tilde{\bf f}} = ({\tilde f}_1, \cdots, {\tilde f}_N)^{T} $  in \eqref{ddotfnorm} and $\Xi_{0,j}$ in \eqref{xi1j}, it is easy to obtain that for $n =1, \cdots, N$
\begin{align}
I^n_0
\,=\, & \epsilon\sum_{j=1}^{N} \int_ {{\mathfrak R}_n}  \beta_1 {\tilde f}_jH_j(1-H_j^2) H_{n, x_n}\,{\mathrm{d}}s \nonumber
\\[1mm]
\,=\, & \epsilon\int_ {{\mathfrak R}_n}  \beta_1 {\tilde f}_n\,H_n(1-H_n^2) H_{n, x_n}\,{\mathrm{d}}s\,+\,\epsilon\sum_{ j\neq n }^{N} \int_ {{\mathfrak R}_n}  \beta_1 {\tilde f}_jH_j(1-H_j^2) H_{n, x_n}\,{\mathrm{d}}s
\\[1mm]
\, =\,&O(\epsilon^{2}|\ln \epsilon|)  \sum_{j=1}^{N} |{\tilde f}_j(\theta)|.\nonumber
\end{align}

\medskip
According to the expression of $\Xi_{5,j}^2$ as in \eqref{Xi5j^2}, we can derive that
\begin{equation}
\begin{split}
I^n_1
 \,=\,&  - 6\, \sqrt 2\, \epsilon\,   \frac{1}{\gamma_{0,n} }\, \beta^2
 \Big[  - \gamma_{1,n}\,   {\mathbf k}_{n} \,  \big(\,e^{\,-  \sqrt 2 \,\beta  \,({\tilde f}_{n}-{\tilde f}_{n-1})\,}-1 \, \big)
 \\[1mm]
 \,&\qquad \qquad \qquad \quad  +\, \gamma_{2,n}\,     {\mathbf k}_{n+1} \,  \big(  e^{\,-  \sqrt 2 \,\beta  \,({\tilde f}_{n+1}-{\tilde f}_n)\,}  -1 \big)   \Big] \int_ {{\mathfrak R}_n} \, {\chi_n}    \,H_{n,x_n}^2 \,{\mathrm{d}}s
 \\[2mm]
= \,&6 \,\sqrt 2\,\epsilon\,  \beta\,
 \gamma_{1,n} \,{\mathbf k}_{n}\,    \Big[e^{-  \sqrt 2 \beta  ({\tilde f}_{n}-{\tilde f}_{n-1})}-1   \Big]
 \\[2mm]
&\,-\, 6 \,\sqrt 2\,\epsilon\,  \beta\, \gamma_{2,n} \,   {\mathbf k}_{n+1} \Big[  e^{-  \sqrt 2 \beta ({\tilde f}_{n+1}-{\tilde f}_n)}  -1  \Big]\,+\, O(\epsilon^{2}|\ln \epsilon|)  \sum_{j=1}^{N} |{\tilde f}_j(\theta)|,
\end{split}
\end{equation}
where $ \gamma_{1,n}, \gamma_{2,n} $ are functions defined in \eqref{ga1n}, \eqref{ga2n}.

\medskip
By the definition of $\Xi_{14,j}$ in \eqref{Xi13j}, it follows that
  \begin{equation}
\begin{split}
I^n_2
&=\, \epsilon^2\big(-2 \beta'{\tilde f}'_n- \beta {\tilde f}''_n\big ) \int_ {{\mathfrak R}_n} H_{n,x_n}^2\,{\mathrm{d}}s
-2\epsilon^2\beta' {\tilde f}'_n\int_ {{\mathfrak R}_n}x_nH_{n,x_nx_n}H_{n,x_n}\,{\mathrm{d}}s
 \\[2mm]
&\quad + \epsilon^2\beta^{-1}  \beta_2{\tilde f}_n\int_ {{\mathfrak R}_n}x_n F(H_n) H_{n,x_n}\,{\mathrm{d}}s
-\epsilon^2  k^2  \beta {\tilde f}_n\int_ {{\mathfrak R}_n}\, H_{n,x_n}^2 \,{\mathrm{d}}s
\\[2mm]
&\,=\, -\, \epsilon^2 \gamma_0   {\tilde f}''_n
   \,-\, \epsilon^2 \gamma_0 \beta' \beta^{-1} {\tilde f}'_n
   \,+\, \epsilon^2 \gamma_0 \Big[ \frac 12   \beta_2 \beta^{-1}
   \,-\,  k^2  \beta   \Big]\beta^{-1} {\tilde f}_n
 \,+\,\epsilon^{2+\sigma}  \mathfrak{Z}_n^0 (  {\tilde{\bf f}},  {\tilde{\bf f}}',  {\tilde{\bf f}}''  )(\theta),
\end{split}
\end{equation}
where we have used the constraint of ${\tilde{\bf f}} = ({\tilde f}_1, \cdots, {\tilde f}_N)^{T} $
 in \eqref{ddotfnorm} and the fact
\begin{equation*}
 \gamma_0= \int_\mathbb{R}\,{H_x^2}\, {\mathrm{d}}x, \quad
 2\int_{\mathbb R} x\, H_x\,H_{xx}  \,\mathrm {d }x= -\int_{\mathbb R} H_x^2 \,\mathrm {d }x.
 \end{equation*}

\medskip
When $s \in {\mathfrak R}_n$, it is easy to obtain that
\begin{equation*}
\bar{x}_j=\beta(s+f_j)\in \Big( \frac{\beta f_{n-1}+\beta f_n}{2}+ \beta f_j, \frac{\beta f_{n+1}+ \beta f_n}{2} + \beta f_j\Big).
\end{equation*}
Therefore, according to the asymptotic behavior of $H'(x)$ as in \eqref{asymptoticofH} and the expressions of $f_j\,(j= 1, \cdots, N)$ in \eqref{f01}-\eqref{f2N}, it is easy to obtain that
\begin{equation*}
\int_ {{\mathfrak R}_n} \bar{H}_{j, \bar{x}_j} H_{n, x_n}\,{\mathrm{d}}s
\,=\,  |\ln \epsilon| \,  O( e^{ - { \sqrt 2 }\, \beta \,  | f_j+f_n|}) , \qquad \forall j,n=1,\cdots, N.
\end{equation*}
Then, recalling to the expression of $\epsilon^2 |\ln\epsilon| \Xi_{2,j}$ in \eqref{E05}, we can derive that
\begin{align}\label{integralofE05}
&\epsilon^2 |\ln \epsilon|\sum_{j=1}^N   \int_ {{\mathfrak R}_n}  \Xi_{2,j}  H_{n, x_n}\,{\mathrm{d}}s \nonumber
\\
&=- \epsilon  k\beta \sum_{j=1}^N   \int_ {{\mathfrak R}_n} \bar{H}_{j, \bar{x}_j} H_{n, x_n}\,{\mathrm{d}}s
-\epsilon \, \beta_1  \sum_{j=1}^N \int_ {{\mathfrak R}_n} \Big( \frac{\bar{x}_j}{\beta}-f_j \Big)  F(\bar{H}_j)  H_{n, x_n}\,{\mathrm{d}}s\\[2mm]
&=\epsilon^{2 }  |\ln \epsilon |   G_n^1(\theta)\,+\,O(\epsilon^{2}|\ln \epsilon|)  \sum_{j=1}^{N} |{\tilde f}_j(\theta)|\,+\,\epsilon^{2+\sigma}     \mathfrak{Z}_n^1(  {\tilde{\bf f}} )(\theta)\nonumber.
 \end{align}

\medskip
Recalling the definition of $\epsilon^2  |\ln \epsilon|\chi_j\Xi_{6,j}$ as in \eqref{Xi6j}, it is easy to check that
\begin{equation}
\begin{split}
 \sum_{j=1}^N &  \int_ {{\mathfrak R}_n}  \epsilon^2  |\ln \epsilon| \chi_j \Xi_{6,j}    H_{n, x_n}\,{\mathrm{d}}s
 \\
 =\,&- \,6 \,\sqrt 2\,\epsilon^2\,\beta^2 \,  \sum_{j=1}^N  {\mathbf k}_j^2  e^{-  2\sqrt 2 \beta  ( {\tilde f}_{j}- {\tilde f}_{j-1})} \int_ {{\mathfrak R}_n}      { \chi_j}  H_{j,x_j} \, e^{- 2\sqrt 2 x_j }  H_{n, x_n}\,{\mathrm{d}}s
\\[1mm]
&\,+ \,6\, \sqrt 2\,\epsilon^2\,\beta^2 \,  \sum_{j=1}^N  {\mathbf k}_{j+1}^2 e^{-  2\sqrt 2 \beta ( {\tilde f}_{j+1}- {\tilde f}_j)} \int_ {{\mathfrak R}_n} { \chi_j}  H_{j,x_j} \,  e^{ 2\sqrt 2  x_j}     H_{n, x_n}\,{\mathrm{d}}s
\\[1mm]
=\,& \epsilon^{2 }  |\ln \epsilon |   G_n^2(\theta)+  O(\epsilon^2  |\ln \epsilon |   ) \sum_{j=1}^N |{\tilde f}_j|.
\end{split}
\end{equation}

According to the definitions of $\epsilon^2  |\ln \epsilon|\Xi_{7,j},$ $\epsilon^2  \Xi_{8,j},$ $\epsilon^2  |\ln \epsilon|\Xi_{9,j} $, $\epsilon^2  |\ln \epsilon|\Xi_{10,j} $ as in \eqref{Xi7j}, \eqref{Xi8jnew}, \eqref{Xi8j}, \eqref{Xi9j}, it is easy to check that
\begin{align}\label{integralofxi6+8}
 &\sum_{j=1}^N   \int_ {{\mathfrak R}_n} \Big\{\epsilon^2  |\ln \epsilon| \Xi_{7,j}    H_{n, x_n}+\epsilon^2  \Xi_{8,j}\,+\,\epsilon^2  |\ln \epsilon|\Xi_{9,j} \,+\,\epsilon^2  |\ln \epsilon|\Xi_{10,j}\Big\}   H_{n, x_n}\,{\mathrm{d}}s \nonumber
 \\
 &= 2\epsilon^2  \beta \sum_{j=1}^N   \int_ {{\mathfrak R}_n}  \Big[     	\big( \dot f_j'  +\bar f '_j\big)  \psi^{*}_{j,zx_j} (x_j, z)
 - \big( \dot f_j'  +\bar f '_j\big) \psi^{**}_{j,zx_j} (x_j, z) \Big] H_{n, x_n}\,{\mathrm{d}}s\nonumber
 \\
 &\quad+ 2\epsilon^2 \,     \sum_{j=1}^N   \int_ {{\mathfrak R}_n}       \Big\{ \frac{\beta'}{\beta} x_j \Big(  \psi^{*}_{j,zx_j} (x_j, z)
+ \psi^{**}_{j,zx_j} (x_j, z)  \Big)     \nonumber
\\[1mm]
& \quad \quad \qquad \qquad \qquad
-     \Big( \psi^{*}_{j,\tau_j z}(\tau_j, z)  + \psi^{**}_{j,\tau_j z}(\tau_j, z)  \Big)\Big[ \frac{\beta' }{\beta}x_j+ \beta f'_j+ 2\beta' f_j\Big]   \Big\} H_{n, x_n}\,{\mathrm{d}}s
\\
&\quad+\epsilon^2  \beta_1 \sum_{j=1}^N   \int_ {{\mathfrak R}_n}      \Big(\frac{x_j}{\beta} + {\dot f}_j +{\bar f}_j \Big)  \,   (1 -3H_j^2)    (\beta_1 {\dot f}_j  \beta^{-2} \psi_j+\psi_j^*+ \psi^{**}_j )  H_{n, x_n}\,{\mathrm{d}}s \nonumber
\\
&\quad+\epsilon^2\sum_{j=1}^N   \int_ {{\mathfrak R}_n} \Big\{-\beta^{-1}  k  \beta_1  \dot{f}_j  \psi_{j, x_j}(x_j ) -\beta k    \Big[\psi^{*}_{j, x_j}(x_j, z) + \psi^{**}_{j, x_j}(x_j, z)  \Big] \Big\}H_{n, x_n}\,{\mathrm{d}}s \nonumber
\\[2mm]
 &=\epsilon^{2 }  |\ln \epsilon |   G_n^3(\theta)\,+\, \epsilon^{2+\sigma}     \mathfrak{Z}_n^2(  {\tilde{\bf f}} )(\theta).\nonumber
\end{align}

\medskip
Recalling  the expression of $\epsilon^2 |\ln \epsilon | \chi_j\Xi_{12,j}$ as in \eqref{Xi12j}, direct computation leads to
\begin{align}\label{integralofxi12}
&\sum_{j=1}^N   \int_ {{\mathfrak R}_n}  \epsilon^2 |\ln \epsilon | \chi_j\Xi_{12,j}  H_{n, x_n}\,{\mathrm{d}}s \nonumber
  \\
 &=  6 \sqrt 2 \epsilon^2\beta^2\sum_{j=1}^N \int_ {{\mathfrak R}_n} \chi_j H_{j,x_j } \Big[-{\mathbf k}_{j}{\mathbf k}_{j-1}e^{- \sqrt 2  x_j   }
 +    {\mathbf k}_{j+1}{\mathbf k}_{j+2}e^{ \sqrt 2  x_j}\Big] H_{n, x_n}\,{\mathrm{d}}s \nonumber
\\[1mm]
&\quad-6 \sqrt 2\epsilon^2 \beta_1\beta^2 \sum_{j=1}^N \int_ {{\mathfrak R}_n}{\chi_j}  \Big( \frac{x_j }{\beta} + \dot{ f}_j+ \bar{f}_j\Big) H_{j,x_j }\nonumber
\\
&\qquad \qquad \qquad \qquad \qquad \qquad \quad\times \Big[-e^{- \sqrt 2  x_j   }{\mathbf k}_{j}
+e^{ \sqrt 2  x_j}{\mathbf k}_{j+1}   \Big]H_{n, x_n}\,{\mathrm{d}}s
 \\[1mm]
 &\quad-4 \epsilon^2\beta^2 \sum_{j=1}^N \int_ {{\mathfrak R}_n} (-1)^{j}  {\mathbf A}_{5j}  \chi_j{\mathbf k}_{j}   {\mathbf k}_{j+1}   \, H_{n, x_n}\,{\mathrm{d}}s \nonumber
 \\[1mm]
 &= \epsilon^2 |\ln \epsilon|G_n^4(\theta)\,+\, \epsilon^{2+\sigma}     \mathfrak{Z}_n^3(  {\tilde{\bf f}} )(\theta).\nonumber
\end{align}

\medskip
For $j=1,\cdots, N$,  $\dot{f}_j$ (c.f.\eqref{f01}-\eqref{f2N}) are of order $|\ln \epsilon|$. According to the expression of $\epsilon^2  |\ln \epsilon|  \Xi_{13,j}$ as in \eqref{Xi14j}, direct computation leads to
\begin{align}\label{integralofxi13}
&\sum_{j=1}^N   \int_ {{\mathfrak R}_n}  \epsilon^2  |\ln \epsilon|  \Xi_{13,j}  H_{n, x_n}\,{\mathrm{d}}s \nonumber
  \\
 &= \, \epsilon^2\sum_{j=1}^N  \Big[  -2 \beta'(\dot{f}'_j + \bar{f}'_j + \check{f}'_j    )- \beta  ( \dot{f}''_j + \bar{f}''_j + \check{f}''_j    )\Big ] \int_ {{\mathfrak R}_n} H_{j,x_j}  H_{n, x_n}\,{\mathrm{d}}s \nonumber
 \\
 &\quad-2\epsilon^2  \sum_{j=1}^N \beta'(   \dot{f}'_j+ \bar{f}'_j + \check{f}'_j  )   \int_ {{\mathfrak R}_n}x_jH_{j,x_jx_j} H_{n, x_n}\,{\mathrm{d}}s
 \\
 &\quad-\epsilon^2  k^2  \beta  \sum_{j=1}^N  (  \dot{f}_j+ \bar{f}_j + \check{f}_j    ) \int_ {{\mathfrak R}_n}    H_{j,x_j}  H_{n, x_n}\,{\mathrm{d}}s \nonumber
 \\
 &\quad
+ \epsilon^2\beta^{-1}  \beta_2 ( \dot{f}_j + \bar{f}_j + \check{f}_j   ) \sum_{j=1}^N   \int_ {{\mathfrak R}_n}  x_j F(H_j)H_{n, x_n}\,{\mathrm{d}}s\nonumber
 \\[2mm]
 &=\,\epsilon^{2 }  |\ln \epsilon |   G_n^5(\theta)\,+ \,\epsilon^{2+\sigma}     \mathfrak{Z}_n^4(  {\tilde{\bf f}} )(\theta)\nonumber.
\end{align}

\medskip
From the expression of $\epsilon^2  |\ln \epsilon|  \Xi_{15,j}$ in \eqref{Xi15}, it can easily be verified that
\begin{align}\label{integralofxi15}
&\sum_{j=1}^N   \int_ {{\mathfrak R}_n}  \epsilon^2  |\ln \epsilon|  \Xi_{15,j}  H_{n, x_n}\,{\mathrm{d}}s \nonumber
  \\
 &= 12\epsilon^2  \beta^2\sum_{j=1}^{N} \int_ {{\mathfrak R}_n} H (  x_j ) \Big[ -{\mathbf k}_j  e^{-\sqrt 2 x_j }
  +   {\mathbf k}_{j+1}    e^{\sqrt 2 x_j }  \Big]  \Big[ \beta_1 \dot{f}_j  \beta^{-2} \psi_j+\psi_j^*+ \psi^{**}_j \Big]H_{n, x_n}\,{\mathrm{d}}s
 \\[2mm]
 &=\epsilon^{2 }  |\ln \epsilon |   G_n^6(\theta)\,+\, \epsilon^{2+\sigma}     \mathfrak{Z}_n^5(  {\tilde{\bf f}} )(\theta).\nonumber
\end{align}

\medskip
Adding \eqref{integralofE05}-\eqref{integralofxi15}, we can obtain that
\begin{align}
I^n_3  \, & =\, \sum_{j=1}^N\int_ {{\mathfrak R}_n}\Big\{\epsilon^2 |\ln\epsilon| \Xi_{2,j}
+ \epsilon^2 |\ln \epsilon|\chi_j \Xi_{6,j}
+  \epsilon^2 |\ln \epsilon|\,  \Xi_{7,j}+  \epsilon^2 \,  \Xi_{8,j}
+  \epsilon^2 \,  |\ln \epsilon | \, \Xi_{9,j}     \nonumber
\\[1mm] &
\qquad \qquad \qquad
+  \epsilon^2   |\ln \epsilon |    \Xi_{10,j}
+  \epsilon^2|\ln \epsilon |\chi_j\Xi_{12,j}
+ \epsilon^2   |\ln\epsilon| \Xi_{13,j}
+ \epsilon^2  |\ln \epsilon| \Xi_{15,j} \Big\}  H_{n, x_n}\,{\mathrm{d}}s,
\\[1mm]
&=   \epsilon^2   |\ln \epsilon |    \mathfrak{D}_n  (\theta)\,+ \,\epsilon^{2+\sigma} \sum_{j=1}^5    \mathfrak{Z}_n^j (  {\tilde{\bf f}} )(\theta)\,+\,O(\epsilon^{2}|\ln \epsilon|)  \sum_{j=1}^{N} |{\tilde f}_j(\theta)|,
\nonumber
\end{align}
where
\begin{equation*}
\mathfrak{D}_n  (\theta)\,:=\,      G_n^1(\theta)\,+\,G_n^2(\theta)\,+\,G_n^3(\theta)\,+G_n^4(\theta)+ \,G_n^5(\theta)\,+\,G_n^6(\theta)
\end{equation*}
is uniformly bounded function and  independent of $ {\tilde{\bf f}}$.

\medskip
 According to the expressions of $\epsilon^2|\ln\epsilon|^2E_{03}, \epsilon^2E_{06} $ in \eqref{E03}, \eqref{E06} and the fact that $H, H_{xx}$ and
 $F(H)$ are odd functions, we obtain the estimates of
\begin{align*}
I^n_4 \,+\,I^n_5
&=  \epsilon^2 \beta^2  \sum_{j=1}^N {f'_j }^2\int_ {{\mathfrak R}_n} H_{j,x_jx_j}    H_{n, x_n}\,{\mathrm{d}}  s
+ \frac 12 \epsilon^2 \beta_2 \sum_{j=1}^N f_j^2\int_ {{\mathfrak R}_n}   F(H_j) H_{n, x_n}\,{\mathrm{d}}  s
\\
&\quad +\epsilon^2  \sum_{j=1}^N \Big(  \frac{{\beta'} }{\beta} \Big)^2\int_ {{\mathfrak R}_n} x_j^2H_{j,x_jx_j}\,H_{n, x_n}\,{\mathrm{d}}  s
+\epsilon^2  \sum_{j=1}^N \frac{\beta'' }{\beta}\int_ {{\mathfrak R}_n}x_jH_{j,x_j}\,H_{n, x_n}\,{\mathrm{d}}  s
\\
&\quad+  \epsilon^2\frac{\beta_2 }{ 2 \beta^{2} } \sum_{j=1}^N \int_ {{\mathfrak R}_n} x_j^2 F(H_j)H_{n, x_n}\,{\mathrm{d}}  s
-  \epsilon^2  k^2   \sum_{j=1}^N \int_ {{\mathfrak R}_n}x_j H_{j,x_j}H_{n, x_n}\,{\mathrm{d}}  s
\\[2mm]
& =   \epsilon^{2+\sigma}    \mathfrak{Z}_n^6(  {\tilde{\bf f}} )(\theta).
\end{align*}

Using the definition of $\bar{\mathcal{K}}$ as in \eqref{mathcalK}, we can obtain that
\begin{equation}\label{I7}
I_n^{7}\,=\,\int_ {{\mathfrak R}_n}   \bar{\mathcal{K}}  H_{n, x_n}\,{\mathrm{d}}s
\,=\,      \epsilon^2       G_n^7  (\theta)\, + \epsilon^{2+\sigma}    \mathfrak{Z}_n^7(  {\tilde{\bf f}} )(\theta).
\end{equation}
In fact, when $s \in {\mathfrak R}_n$, it is easy to obtain that
\begin{equation*}
\tau_j=-\beta(s+f_j)\in \Big(-\frac{\beta f_{n+1}+ \beta f_n}{2} - \beta f_j,\, -\frac{\beta f_{n-1}+\beta f_n}{2}- \beta f_j\Big).
\end{equation*}
Recall the  asymptotic estimates of $\psi_j, \psi^*_j$ in \eqref{estimatepsi1}, \eqref{estimatepsi2},  and  $\psi^{**}_j, \omega^{*}_j$ have similar asymptotic estimates of  $ \psi^*_j $.  Then the proof of \eqref{I7} is very straightforward. In the following, the term  $ \epsilon^2 G_n^7  (\theta)$  will be  absorbed  in  $\epsilon^2|\ln \epsilon | \mathfrak{D}_n  (\theta)$.

Next, recalling the expressions of $\mathcal{Q}_1 $ as in \eqref{mathcalQ1}, we can get
\begin{equation*}
I_n^{8}\,=\,\int_ {{\mathfrak R}_n}   \mathcal{Q}_1  H_{n, x_n}\,{\mathrm{d}}s
\,=\,\epsilon^{2+\sigma}    \mathfrak{Z}_n^8(  {\tilde{\bf f}} )(\theta).
\end{equation*}

\medskip
Recall that for $ s \in    \mathbb R^{+}   \setminus {\mathfrak R}_n$, we have
 \begin{align*}
 H_{n,x_n} = (-1)^n H' \big( \beta(s-f_n )  \big) \, =\,      \max_{j\neq n}O( e^{ -\frac{ \sqrt 2 }{2}\, \beta \,  | f_j-f_n|}) \, =\, O(\epsilon^\frac{1}{2}).
 \end{align*}
 Combining  above fact and  the assumption  \eqref{ddotfnorm}, we can get easily
 \begin{align*}
 \mathbb{E}_{n2} \, =\, O(\epsilon^{\frac{1}{2}}) \sum\limits_{i=0}^{8} I_{i}^{n}.
  \end{align*}
This finishes the estimates of the first term in \eqref{equivalancec1}.

\medskip
Finally, we turn to the estimates of the other two terms in \eqref{equivalancec1}. Denote
 \begin{align*}
 {\mathcal W}_n(\epsilon z) \, =\,     -   \int_0^{+\infty }  \,  \eta_{\delta}^{\epsilon}(s) {\mathcal N}(\tilde \phi)      H_{n, x_n}   {\mathrm{d}}s.
 \end{align*}
 Combining  the estimate of $\tilde \phi$ in  \eqref{tilde phi 1} and \eqref{tilde phi 2},  we can conclude that
 \begin{equation*}
\|{\mathcal W}_n   \|_{L^2(0, \ell)}    \, \leq \, C \epsilon^{   2+ \sigma }.
\end{equation*}
The last term in \eqref{equivalancec1} can be  rewritten as
 \begin{align}
 {{\mathfrak{V}}}_n(\epsilon z) \, &=\,    \int_0^{+\infty }   \,  {\mathcal L}(\tilde \phi) \, H_{n, x_n}  \,  {\mathrm{d}}s\nonumber
\\[1mm]
\, & = \,        \int_0^{+\infty }     {\tilde \phi}_{zz}
 \,  H_{n, x_n} \,  {\mathrm{d}}s
 +3   \beta^2       \int_0^{+\infty }    {\tilde \phi} \Big[    H^2\big(  \beta(s-f_n) \big)- \mathbf{H}^2 \,  \Big]  H_{n, x_n} \,     {\mathrm{d}}s
\\[1mm]
\, & \quad    +    \int_0^{+\infty }   \, \eta^{\epsilon}_{10 \delta}(s) \Big[ \, B_1(\tilde \phi) +  B_6(\tilde \phi) \Big]    H_{n, x_n}  \,  {\mathrm{d}}s, \nonumber
 \end{align}
A similar estimate holds
\begin{align}
\|{{\mathfrak{V}}}_n   \|_{L^2(0, \ell)} \, \leq \, C \epsilon^{ 2+ \sigma}.
\end{align}

\medskip
By the notation of
\begin{equation*}
\begin{split}
\mathbb {M}_n (\theta ,{\tilde{\bf f}},{\tilde{\bf f}}',{\tilde{\bf f}}'')
\, =\, & \epsilon^2 \gamma_0 \beta' \beta^{-1}{\tilde f}_n'
-\epsilon^2 \gamma_0 \Big[\frac{1}{2}\beta_2\beta^{-1}-k^2 \beta \Big]\beta^{-1}{\tilde f}_n
\\& \quad
-  \epsilon^{2+\sigma}  \mathfrak{Z}_n^0 (  {\tilde{\bf f}},  {\tilde{\bf f}}',  {\tilde{\bf f}}''  )(\theta)
- \epsilon^{ 2+ \sigma} \sum_{i=1}^8{{\mathfrak{Z}}}_n^i ({\tilde{\bf f}})(\theta )
\\& \quad
-   {{\mathfrak{V}}}_n(\theta ) -{\mathcal W}_n(\theta )+O(\epsilon^2 |\ln \epsilon|) \sum\limits_{j=1}^{N} |{\tilde f}_j|,
\end{split}
\end{equation*}
we can get that the relations in \eqref{equivalancec1} are equivalent to the following system of differential equations, for $n=1,\cdots, N$,
\begin{equation*}
\begin{split}
-  \epsilon^2   \gamma_0 \,{\tilde f}''_n   &  +    6 \sqrt 2\,  \epsilon   \, \beta\,
 \gamma_{1,n}\,   {\mathbf k}_{n}    \Big[\,e^{\,-  \sqrt 2 \,\beta  \,({\tilde f}_{n}-{\tilde f}_{n-1})\,}-1 \, \Big]
\\[2mm]
&
 - 6 \sqrt 2\,  \epsilon  \,  \beta  \, \gamma_{2,n}\,     {\mathbf k}_{n+1} \,\Big[  e^{\,-  \sqrt 2 \,\beta  \,({\tilde f}_{n+1}-{\tilde f}_n)}  -1 \Big] =    \,-\,
\epsilon^2|\ln \epsilon |    \mathfrak{D}_n  +    \mathbb {M}_n (\theta, {\tilde{\bf f}},{\tilde{\bf f}}',{\tilde{\bf f}}'').
\end{split}
\end{equation*}
Therefore, we can draw a conclusion in the following proposition

\begin{proposition}\label{proposition 5.1}
For the validity of (\ref{equivalancec}),  there should hold the following equations, for $n=1, \cdots, N$,
\begin{equation}\label{ts0a}
\begin{split}
- \epsilon^2   \gamma_0 {\tilde f}''_n
&=\,   -\, 6 \sqrt 2\,  \epsilon   \, \beta \,
 \gamma_{1,n}\,   {\mathbf k}_{n}    \Big[e^{\,-  \sqrt 2 \beta  ({\tilde f}_{n}-{\tilde f}_{n-1})\,}-1  \Big]
\\[2mm]
&\quad
\,+\, 6 \sqrt 2\,  \epsilon  \beta \gamma_{2,n}  {\mathbf k}_{n+1} \,\Big[  e^{\,-  \sqrt 2 \beta ({\tilde f}_{n+1}-{\tilde f}_n)\,}  -1 \Big]
-\epsilon^2|\ln \epsilon |    \mathfrak{D}_n
+ \mathbb {M}_n (\theta, {\tilde{\bf f}},{\tilde{\bf f}}',{\tilde{\bf f}}'').
\end{split}
\end{equation}
Moreover, for any $n=1, \cdots, N$,    the operator $\mathbb {M}_n (\theta, {\tilde{\bf f}},{\tilde{\bf f}}',{\tilde{\bf f}}'')  $  can be  decomposed in the following way
\begin{equation*}
 \mathbb {M}_n (\theta,{\tilde{\bf f}},{\tilde{\bf f}}',{\tilde{\bf f}}'')
 =\mathbb {M}_{n1} (\theta, {\tilde{\bf f}},{\tilde{\bf f}}')
 \,+\,\mathbb {M}_{n2} (\theta, {\tilde{\bf f}},{\tilde{\bf f}}',{\tilde{\bf f}}'')
\end{equation*}
where $\mathbb {M}_{n1} (\theta, {\tilde{\bf f}},{\tilde{\bf f}}')$ and $\mathbb {M}_{n2} (\theta, {\tilde{\bf f}},{\tilde{\bf f}}',{\tilde{\bf f}}'')$ are continuous of their arguments. Function $\mathbb {M}_{n1} (\theta, {\tilde{\bf f}},{\tilde{\bf f}}')$ satisfies
\begin{align}\label{highorder1}
     \|  \mathbb {M}_{n1} (\theta, {\tilde{\bf f}},{\tilde{\bf f}}') \|_{L^2(0, \ell)} \leq C \epsilon^{ 2+\sigma}.
\end{align}
The function $\mathbb {M}_{n2} (\theta, {\tilde{\bf f}},{\tilde{\bf f}}',{\tilde{\bf f}}'')$ has the following properties
\begin{align}\label{highorder3}
     \|  \mathbb {M}_{n2} (\theta, {\tilde{\bf f}},{\tilde{\bf f}}',{\tilde{\bf f}}'')  \|_{L^2(0, \ell)} \leq C \epsilon^{ 2+\sigma}.
\end{align}
\begin{align}\label{highorder2}
\big\|
{\mathbb M}_{n2}(\theta, {\tilde{\bf f}}^{(1)}, {\tilde{\bf f}}^{(1)}{'}, {\tilde{\bf f}}^{(1)}{''})
\,-\,
{\mathbb M}_{n2}(\theta, {\tilde{\bf f}}^{(2)}, {\tilde{\bf f}}^{(2)}{'}, {\tilde{\bf f}}^{(2)}{''})
\big\|_{L^2(0, \ell)}
\,\leq\,&
 C\epsilon ^{1+\sigma}\, \|{\tilde{\bf f}}^{(1)}\,-\,{\tilde{\bf f}}^{(2)}\|.
\end{align}
\qed
\end{proposition}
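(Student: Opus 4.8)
The plan is to collect the term-by-term estimates of the three integrals appearing in \eqref{equivalancec1}, rearrange them into the form \eqref{ts0a}, and then sort the remaining part according to its dependence on $ {\tilde{\bf f}}$.

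First I would recall \eqref{equivalancec}$\,\Leftrightarrow\,$\eqref{equivalancec1} and the splitting $\int_0^{+\infty}\eta_\delta^\epsilon(s){\mathcal E}\,H_{n,x_n}\,{\mathrm d}s={\mathbb E}_{n1}+{\mathbb E}_{n2}=\sum_{i=0}^{8}I_i^n+{\mathbb E}_{n2}$, using $\eta_\delta^\epsilon(s){\mathcal E}={\bf S}(u_3)$ on $ {\mathfrak R}_n$ with $ {\bf S}(u_3)$ as in \eqref{S(u3)}. Among the $I_i^n$, only $I_1^n$ and $I_2^n$ carry structural information: $I_1^n$, arising from $\epsilon\sum_j\chi_j\Xi_{5,j}^2$ in \eqref{Xi5j^2}, reproduces precisely the two Toda exponentials on the right of \eqref{ts0a} — the $\gamma_{0,n}$ in the denominator of $\Xi_{5,j}^2$ cancels after integrating $\chi_n H_{n,x_n}^2$ over $ {\mathfrak R}_n$ (the change of variables $s\mapsto x_n$ absorbing one power of $\beta$) — up to an $O(\epsilon^2|\ln\epsilon|)\sum_j|{\tilde f}_j|$ error; and $I_2^n$, arising from $\epsilon^2\sum_j\Xi_{14,j}$ in \eqref{Xi13j}, reproduces $-\epsilon^2\gamma_0{\tilde f}_n''$ together with the linear corrections $-\epsilon^2\gamma_0\beta'\beta^{-1}{\tilde f}_n'+\epsilon^2\gamma_0(\tfrac12\beta_2\beta^{-1}-k^2\beta)\beta^{-1}{\tilde f}_n$ and an $\epsilon^{2+\sigma}{\mathfrak Z}_n^0({\tilde{\bf f}},{\tilde{\bf f}}',{\tilde{\bf f}}'')$ error, via \eqref{identity1} and \eqref{ddotfnorm}. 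All remaining contributions are genuine remainders, because the correction terms were chosen (cf. \eqref{eqaution1}, \eqref{eqaution 2} and the orthogonality conditions \eqref{condition2}, \eqref{psi**3}, \eqref{omega3}) so as to remove the leading-order projection onto $H_{n,x_n}$: namely $I_0^n=O(\epsilon^2|\ln\epsilon|)\sum_j|{\tilde f}_j|$; $I_3^n=\epsilon^2|\ln\epsilon|\,{\mathfrak D}_n+\epsilon^{2+\sigma}\sum_{j=1}^{5}{\mathfrak Z}_n^j+O(\epsilon^2|\ln\epsilon|)\sum_j|{\tilde f}_j|$ with $ {\mathfrak D}_n=\sum_k G_n^k$ bounded and independent of $ {\tilde{\bf f}}$, using \eqref{asymptoticofH}, \eqref{f01}--\eqref{f2N}, \eqref{estimatepsi1}--\eqref{estimatepsi2} and \eqref{ddotfnorm}; $I_4^n,I_5^n,I_6^n$ (from $E_{03},E_{06},N_{u_1}(\phi_1)$) lose their $|\ln\epsilon|^2$ parts by the oddness of $H,H_{xx},F(H)$ and give $\epsilon^2 G_n(\theta)+\epsilon^{2+\sigma}{\mathfrak Z}_n^6$; $I_7^n$ (from $\bar{\mathcal K}$) gives $\epsilon^2 G_n^7(\theta)+\epsilon^{2+\sigma}{\mathfrak Z}_n^7$; $I_8^n=\epsilon^{2+\sigma}{\mathfrak Z}_n^8$ since $ {\mathcal Q}_1=O(\epsilon^{2+\sigma})$ by \eqref{mathcalQ1}; and $ {\mathbb E}_{n2}=O(\epsilon^{1/2})\sum_i I_i^n$ since $H_{n,x_n}=O(\epsilon^{1/2})$ off $ {\mathfrak R}_n$.

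Next I would estimate the other two integrals in \eqref{equivalancec1}. The nonlinear term $ {\mathcal W}_n=-\int_0^{+\infty}\eta_\delta^\epsilon(s){\mathcal N}(\tilde\phi)H_{n,x_n}\,{\mathrm d}s$ is $O(\epsilon^{2+\sigma})$ in $L^2(0,\ell)$ by the (at least quadratic, spatially localized) structure of $ {\mathcal N}$ together with \eqref{contraction-varphi} and \eqref{tilde phi 1}; the linear term $ {\mathfrak V}_n=\int_0^{+\infty}{\mathcal L}(\tilde\phi)H_{n,x_n}\,{\mathrm d}s$ is $O(\epsilon^{2+\sigma})$ in $L^2(0,\ell)$ once one writes $ {\mathcal L}(\tilde\phi)=\tilde\phi_{zz}+3\beta^2[H^2(\beta(s-f_n))-\mathbf{H}^2]\tilde\phi+\eta_{10\delta}^\epsilon[B_1(\tilde\phi)+B_6(\tilde\phi)]$, handles $\int\tilde\phi_{zz}\chi_n H_{n,x_n}\,{\mathrm d}s$ by differentiating the orthogonality \eqref{projectedproblem3} twice in $z$ (so that two $\epsilon$-derivatives fall on $\chi_n H_{n,x_n}$), and bounds the remainder by $H^2(\beta(s-f_n))-\mathbf{H}^2=O(\epsilon)$ near $ {\mathfrak R}_n$, the $O(\epsilon|\ln\epsilon|)$-smallness of the coefficients of $B_1,B_6$ on the support of $\eta_{10\delta}^\epsilon$, and \eqref{tilde phi 1}--\eqref{tilde phi 2}.

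Finally I would assemble: moving $-\epsilon^2\gamma_0{\tilde f}_n''$ and the two Toda exponentials to the left-hand side and the rest to the right gives \eqref{ts0a} with $ {\mathbb M}_n$ exactly as displayed before the proposition. Then I would set $ {\mathbb M}_{n1}$ to be the piece built from the $ {\tilde{\bf f}}$-independent data and from $ {\tilde f}_n,{\tilde f}_n'$ alone (the $\epsilon^2$-linear terms, the $O(\epsilon^2|\ln\epsilon|)\sum_j|{\tilde f}_j|$ term, and the $ {\mathfrak Z}_n^j$ with $j\in\{1,\dots,5\}$), and $ {\mathbb M}_{n2}=-{\mathfrak V}_n-{\mathcal W}_n-\epsilon^{2+\sigma}({\mathfrak Z}_n^0+{\mathfrak Z}_n^6+{\mathfrak Z}_n^7+{\mathfrak Z}_n^8)$; both obey \eqref{highorder1}, \eqref{highorder3} by the bounds above. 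For \eqref{highorder2} I would note that the $ {\mathfrak Z}_n^i$ depend on $ {\tilde{\bf f}}$ only through $e^{-\sqrt2\beta({\tilde f}_j-{\tilde f}_{j-1})}-1$, $ {\mathbf k}_j^2$ and $ {\mathbf k}_j{\mathbf k}_{j\pm1}$, hence are Lipschitz in $ {\tilde{\bf f}}$ with $O(1)$ constant, so $\epsilon^{2+\sigma}{\mathfrak Z}_n^i$ is Lipschitz with constant $\le C\epsilon^{1+\sigma}$ (the borderline case being $ {\mathfrak Z}_n^0$, whose $ {\tilde{\bf f}}''$-dependence, measured against the norm in \eqref{ddotfnorm}, costs one power of $\epsilon^{-1}$); and that $ {\mathcal W}_n,{\mathfrak V}_n$ depend on $ {\tilde{\bf f}}$ only through $\tilde\phi=\tilde\phi({\tilde{\bf f}})$ and through $f_n=\dot f_n+\bar f_n+\check f_n+{\tilde f}_n$ inside $H_{n,x_n},\chi_n$ and the $O(\epsilon)$ factor $H^2(\beta(s-f_n))-\mathbf{H}^2$, so that \eqref{flip2} and \eqref{varphi-lip} give their Lipschitz constant $\le C\epsilon^{1+\sigma}$. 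The hard part is precisely this last estimate for $ {\mathfrak V}_n$: one must move the two $z$-derivatives off $\tilde\phi$ via \eqref{projectedproblem3} before invoking the Lipschitz dependence $\|\tilde\phi({\tilde{\bf f}}_1)-\tilde\phi({\tilde{\bf f}}_2)\|_{H^2}\le C\epsilon\|{\tilde{\bf f}}_1-{\tilde{\bf f}}_2\|_{H^2}$ from \eqref{flip2}, which itself rests on the linearized theory of \cite{delPKowWei2}; a secondary difficulty is checking that every $O(\epsilon^2|\ln\epsilon|)$-size contribution separates cleanly into the $ {\tilde{\bf f}}$-independent part $ {\mathfrak D}_n$ and a $ {\tilde{\bf f}}$-dependent remainder, which relies on the orthogonality relations \eqref{orthorgnalityXi5n}, \eqref{condition2}, \eqref{psi**3}, \eqref{omega3} and on parity.
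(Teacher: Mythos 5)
Your proposal is correct and follows essentially the same route as the paper: the ``proof'' of Proposition~\ref{proposition 5.1} is precisely the term-by-term analysis carried out in Section~\ref{section6}, and you have reconstructed its structure faithfully. You correctly isolate $I_1^n$ (from $\epsilon\sum_j\chi_j\Xi_{5,j}^2$) as the source of the Toda exponentials, with $\gamma_{0,n}$ cancelling against $\int_{{\mathfrak R}_n}\chi_n H_{n,x_n}^2\,{\mathrm d}s$ after the change of variables $s\mapsto x_n$; you correctly identify $I_2^n$ (from $\epsilon^2\sum_j\Xi_{14,j}$) as producing $-\epsilon^2\gamma_0{\tilde f}_n''$ together with the $\epsilon^2$-linear corrections via \eqref{identity1}; and you classify the remaining $I_i^n$, ${\mathbb E}_{n2}$, ${\mathcal W}_n$, ${\mathfrak V}_n$ into $\epsilon^2|\ln\epsilon|\,{\mathfrak D}_n$ and the various ${\mathfrak Z}_n^j$ in the same way the paper does. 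Your explanation of the Lipschitz estimate \eqref{highorder2} — placing the ${\tilde{\bf f}}''$-dependent piece $\mathfrak{Z}_n^0$ in ${\mathbb M}_{n2}$ and noting that measuring a $\tilde{\bf f}''$-Lipschitz constant against the norm \eqref{ddotfnorm} costs one power of $\epsilon^{-1}$, turning $\epsilon^{2+\sigma}$ into $\epsilon^{1+\sigma}$, and handling ${\mathfrak V}_n$ by differentiating the orthogonality \eqref{projectedproblem3} in $z$ before invoking \eqref{flip2} — is exactly the intended mechanism. Two inessential deviations: the paper's record shows $I_4^n+I_5^n=\epsilon^{2+\sigma}\mathfrak{Z}_n^6$ with no extra $\epsilon^2 G_n(\theta)$ term (the parity argument kills it entirely), and the paper does not actually spell out which ${\mathfrak Z}_n^j$ go into ${\mathbb M}_{n1}$ versus ${\mathbb M}_{n2}$ — the split you choose is one admissible choice, the only real constraint being that the $\tilde{\bf f}''$-dependent parts land in ${\mathbb M}_{n2}$, consistent with its role as the contraction part versus the Schauder part ${\mathbb M}_{n1}$ used in Section~\ref{section7}.
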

We omit the details of the proof of this proposition. The reader can refer to \cite{delPKowWei2} and \cite{delPKowWei3}

\medskip
 \section{Solve the system of reduced equations}\label{section7}

Note that the parameters ${\tilde f}_1, \cdots, {\tilde f}_N$ will determine the locations of phase transition layers and play an important role in the description of the interaction between neighboring layers in the clustering phenomena. As a consequence of Proposition \ref{proposition 5.1}, 
to find ${\tilde{\bf f}}=\big({\tilde f}_1, \cdots, {\tilde f}_N\big)$, we have to deal with the following system, for $n=1,\cdots, N$,
\begin{equation}\label{ts2a}
\begin{split}
&  \,   -  \epsilon\gamma_0  {\tilde f}''_n
 -  12  \,     \beta^2 \,
 \gamma_{1,n}\,   {\mathbf k}_{n}   ({\tilde f}_{n}-{\tilde f}_{n-1})
  +  12   \,  \beta^2  \, \gamma_{2,n}\,     {\mathbf k}_{n+1}  ({\tilde f}_{n+1}-{\tilde f}_n)
 \\[2mm]
 & \, =  {\mathcal J}_n({\tilde{\bf f}})        -   \epsilon|\ln \epsilon |    \mathfrak{D}_n
- \epsilon^{-1} \mathbb {M}_n (\theta, {\tilde{\bf f}},{\tilde{\bf f}}',{\tilde{\bf f}}''),
\end{split}
\end{equation}
with boundary conditions
\begin{equation}
{\tilde{\bf f}}(0) \,=\, {\tilde{\bf f}}(\ell),
\quad
{\tilde{\bf f}}'(0) \, =\, {\tilde{\bf f}}'(\ell ).
\end{equation}
Here, the nonlinear terms ${\mathcal J}_n       $  for $ n= 1, \cdots,  N $ are given by
   \begin{align}\label{6.3}
   {\mathcal J}_n({\tilde{\bf f}})
   \,& =\, - 6 \sqrt 2  \, \beta \,
 \gamma_{1,n}\,   {\mathbf k}_{n}    \Big[e^{-  \sqrt 2 \,\beta  \,({\tilde f}_{n}-{\tilde f}_{n-1})}-1 + \sqrt 2 \beta  \,({\tilde f}_{n}-{\tilde f}_{n-1})\Big]
\nonumber
\\[2mm]
\, & \quad +   6 \sqrt 2 \,  \beta  \, \gamma_{2,n}\,     {\mathbf k}_{n+1}   \Big[  e^{-  \sqrt 2 \,\beta  \,({\tilde f}_{n+1}-{\tilde f}_n)}  -1 +  \sqrt 2 \,\beta  \,({\tilde f}_{n+1}-{\tilde f}_n)\Big].
   \end{align}
In the above, for $n=1,\cdots, N$, the terms $\gamma_{1,n}$, $\gamma_{2,n}$, ${\mathbf k}_{n}$ are given in
\eqref{ga1n}, \eqref{ga2n}, \eqref{mathbf kj}.

\bigskip
\noindent{\bf Step 1:}
The first try is to simplify the above system.
We will denote
\begin{align*}
{\mathfrak c}_n(\theta)  \, :=\, 12  \,     \beta^2 \,
 \gamma_{1,n}\,   {\mathbf k}_{n}(\theta),
 &\qquad \quad
{\mathfrak d}_n(\theta) \, :=\, 12   \,  \beta^2  \, \gamma_{2,n}\,     {\mathbf k}_{n+1}(\theta),
\\[2mm]
{\mathcal J}\bigl({\tilde{\bf f}}\bigr)(\theta) :=\big({\mathcal J}_1\bigl({\tilde{\bf f}}\bigr)(\theta), \cdots,  {\mathcal J}_N\bigl({\tilde{\bf f}}\bigr)(\theta) \big)^T,
&\qquad\quad
\mathfrak{D} (\theta) \, :=\, \big(\mathfrak{D}_1(\theta)  \cdots, \mathfrak{D}_N (\theta)              \big)^T,
\end{align*}
and
\begin{equation*}
\Upsilon({\tilde{\bf f}})(\theta):= \Big(-\epsilon^{-1} \mathbb {M}_1 ({\tilde{\bf f}})  (\theta ),  \cdots, -  \epsilon^{-1} \mathbb {M}_N ({\tilde{\bf f}})  (\theta )              \Big)^T.
\end{equation*}
Then system (\ref{ts2a})  becomes:
\begin{equation}
  -  \epsilon \gamma_0  {\tilde{\bf f}}''
\, =\,
{\bf B} {\tilde{\bf f}}
\,+\,{\mathcal J}\bigl({\tilde{\bf f}}\bigr)
\,-\,\epsilon  |\ln \epsilon| \mathfrak{D}
\,+\,\Upsilon({\tilde{\bf f}}),
\label{auxil 2}
\end{equation}
where   ${\bf B} $    can be expressed as
\begin{align*}
{\bf B}\,=\,
\left(
\begin{array}{ccccccccc}
    2  {\mathfrak c}_1+{\mathfrak d}_1   &-  \mathfrak{d}_1&0&  0&\cdots &0&0&0&0
\\
  -{\mathfrak c}_2& ({\mathfrak c}_2+\mathfrak{d}_2)&-  \mathfrak{d}_2    & 0&\cdots &0&0&0&0
\\
\vdots&\vdots &\vdots &\vdots &\ddots&\vdots &\vdots & \vdots&\vdots
\\
0 & 0&0&0&\cdots&  0&     -   {\mathfrak c}_{N-1}     & (  {\mathfrak c}_{N-1} +  {\mathfrak d}_{N-1})&  - \mathfrak{d}_{N-1}
\\
0 & 0&0&0&\cdots&0&  0& - \mathfrak{c}_{N}&  \mathfrak{c}_{N}
\end{array}
\right).
\end{align*}

\medskip
We have the relations,   for $n=1,\cdots, N$,
\begin{equation*}
  \mathfrak{c}_{n}
\,=\,
{\mathfrak a}_{n-1}+O(\epsilon^{\frac 12} ),
\qquad \quad
  \mathfrak{d}_{n-1}
\,=\,
{\mathfrak a}_{n-1}+O(\epsilon^{\frac 12} ),
\end{equation*}
where $ {\mathfrak a}_{n-1}$ can be expressed as
\begin{equation*}
{\mathfrak a}_{n-1}(\theta) \, =\,  12   \beta^2 (\theta) \gamma_1 \, {\mathbf k}_n(\theta).
\end{equation*}
For $n=1,\cdots,N$, recalling the expressions of  ${\mathbf d}_n$ and  ${\mathbf k}_n$ in \eqref{mathbfdn}, \eqref{mathbf kj},
we get
\begin{equation}\label{mathfrak a}
\begin{split}
{\mathfrak a}_{n-1}
\, &=\, 12   \beta^2(\theta)  \gamma_1 \, {\mathbf k}_n(\theta)
 \,=\,  12   \beta^2(\theta)  \gamma_1 \,
 {\mathbf d}_n e^{-\sqrt 2 \, \beta (\check{f}_{n}(\theta)-\check{f}_{n-1}(\theta)) }
 \\[2mm]
 \, &=\,        12   \beta^2(\theta)  \gamma_1 \,  \frac{(N-n+1)}{9\beta^2(\theta)  \gamma_1}\Big({\mathcal H}+O(\epsilon^{\frac{1}{2}}) \Big)  e^{-\sqrt 2 \, \beta (\check{f}_{n}(\theta)-\check{f}_{n-1}(\theta)) }>0,\quad \forall\,\theta\in [0, \ell],
\end{split}
\end{equation}
where ${\mathcal H}$ is given in \eqref{meancurvaturepositive}.
It is obvious that ${\mathbf B}$ is a perturbation of a symmetric matrix ${\bf A}$ defined in the form
\begin{align*}
{\bf A}\,=\,  \left(
\begin{array}{ccccccccc}
   2  {\mathfrak a}_0 +{\mathfrak a}_1  & -  {\mathfrak a}_1&0&  0&\cdots &0&0&0&0
\\
- {\mathfrak a}_1 & ({\mathfrak a}_1+{\mathfrak a}_2)&- {\mathfrak a}_2& 0&\cdots &0&0&0&0
\\
\vdots&\vdots & \vdots& \vdots&\ddots& &\vdots &\vdots &\vdots
\\
0 & 0&0&0&\cdots&  0&        & ({\mathfrak a}_{N-2}+{\mathfrak a}_{N-1})&- {\mathfrak a}_{N-1}
\\
0 & 0&0&0&\cdots&0&  0&-  {\mathfrak a}_{N-1}& {\mathfrak a}_{N-1}
\end{array}
\right).
\end{align*}
Using elementary matrix operations, it is easy to prove that
there exists an invertible matrix ${\bf Q}$ independent of $\theta$ such that
$$
{\bf Q}{\bf A}{\bf Q}^T=\text{diag}(     2 {\mathfrak a}_0,       {\mathfrak a}_1, {\mathfrak a}_2, \cdots, {\mathfrak a}_{N-1}).
$$
Since ${\mathfrak a}_0(\theta), {\mathfrak a}_1(\theta), \cdots, {\mathfrak a}_{N-1}(\theta)$ are positive functions defined in (\ref{mathfrak a}),
we can assume that all eigenvalues of the matrix ${\bf A}$ are
\begin{equation*}
\rho_1(\theta)\geq\cdots\geq \rho_{N-1}(\theta) \geq \rho_N(\theta)>0.
\end{equation*}

\medskip
Similarly, it is easy to prove that
there exists an invertible matrix $\hat{\bf Q}$ such that
\begin{equation*}
\hat{\bf Q}          {\bf B}   {\hat{\bf Q}}^T
\,=\, \mbox{diag}\Bigl(\, 2{\mathfrak a}_0+O(\epsilon^{\frac 12}), {\mathfrak a}_{1}+O(\epsilon^{\frac 12}), \, \cdots,   \,   {\mathfrak a}_{N-1}+  O(\epsilon^{\frac 12}) \Bigr ).
\end{equation*}
We also assume that all eigenvalues of the matrix  $        {\bf B} $  are
$ \rho^{\epsilon}_1(\theta), \cdots, \rho^{\epsilon}_N (\theta)$.
Naturally,   we have
\[
\rho^{\epsilon}_n(\theta)\,=\, \rho_n(\theta)+O (     \epsilon^{\frac 12} ), \quad\forall\, n=1, \cdots, N.
\]
Moreover,  there exists another invertible matrix ${\bf P}$
in the form
\begin{equation}\label{P}
{\bf P}^T{\bf P}={\bf I},
\qquad
{\bf P}=\left(
\begin{array}{cccc}
p_{11} & \cdots & p_{1N-1} & p_{1N}
\\
p_{21} & \cdots & p_{2N-1} & p_{2N}
\\
\vdots  & \vdots  & \vdots  & \vdots
\\
p_{N1} & \cdots & p_{NN-1} & p_{NN}
\\
\end{array}
\right),
\end{equation}
such that
\begin{equation*}
{\bf P}^T   {\bf B} {\bf P}   \, =\,  \mbox{diag}(\rho^{\epsilon}_1, \cdots, \rho^{\epsilon}_N).
\end{equation*}

\medskip
Now, we define new vectors
\begin{equation*}
\mathfrak{u}:=(\mathfrak{u}_1, \cdots, \mathfrak{u}_{N})^T={\bf P}^T{\tilde{\bf f}},
\qquad
\tilde {\mathfrak{D} }  :=\big(\tilde {\mathfrak{D} } _1, \cdots, \tilde {\mathfrak{D} }_N \big)^T \, = \,   {\bf P}^T {\mathfrak{D} },
\end{equation*}
and
\begin{align*}
\tilde{\mathcal J}(\mathfrak{u} ):=&\big( \tilde{\mathcal J}_1(\mathfrak{u} ),\cdots, \tilde{\mathcal J}_N(\mathfrak{u} ) \big)^T
 \, = \,   {\bf P}^T   {\mathcal J}({\tilde{\bf f}}),
\\[3mm]
{\bf W}(\mathfrak{u})\,:=&\big({\bf W}_1(\mathfrak{u}), \cdots, {\bf W}_N(\mathfrak{u})\big)^T
\,=\,
{\bf P}^T {\bf P}'' {\mathfrak u}
\,+\,
{\bf P}^T  {\bf P}'  {\mathfrak u}',
\\[3mm]
{\tilde\Upsilon}(\mathfrak{u}):
=&\big({\tilde \Upsilon}_1(\mathfrak{u}), \cdots, {\tilde \Upsilon}_N(\mathfrak{u})\big)^T
= {\bf P}^T \Upsilon({\tilde{\bf f}})
={\bf P}^T \Upsilon({\bf P}\mathfrak{u}).
\end{align*}
Then the system \eqref{auxil 2}  can be rewritten as
\begin{equation}\label{ts6a}
- \epsilon \gamma_0 \mathfrak{u}''
\,- \,\mbox{diag}(\rho^{\epsilon}_1, \cdots, \rho^{\epsilon}_N)  \mathfrak{u}
\,=\,
{\tilde{\mathcal J}}\bigl(\mathfrak{u} \bigr)
\,-\,\epsilon  |\ln \epsilon |   \tilde{\mathfrak{D} }
\,+\,\epsilon   \gamma_0    {\bf W}(\mathfrak{u})
\,+\,{\tilde\Upsilon} (\mathfrak{u}).
\end{equation}

\medskip
In order to cancel the term $-   \epsilon  |\ln \epsilon |    \tilde{\mathfrak{D} } $, we further set,
$$
\mathfrak{u}_n  (\theta)  =    \hat{ \mathfrak{u}}_n    (\theta)    \,+\,  \frac{ \epsilon |\ln \epsilon |  \tilde{\mathfrak{D} }_n(\theta)} {\rho^{\epsilon}_n(\theta) }
,\quad
\forall\, n =1, \cdots, N.
$$
Then   \eqref{ts6a}  becomes a system of $ \hat {\mathfrak{u} }   (\theta) = \big(\hat {\mathfrak{u} }_1  (\theta), \cdots, \hat {\mathfrak{u} }_N  (\theta)      \big)^T $,
\begin{align}
 -\epsilon\gamma_0  \hat{\mathfrak{u}}''
 \,-\,\mbox{diag}(\rho^{\epsilon}_1, \cdots, \rho^{\epsilon}_N) \hat{ \mathfrak{u}}
 \,=\, &\, {\tilde{\mathcal J}}\bigl( \hat {\mathfrak{u} }
 \,+\,\epsilon  |\ln \epsilon| \mathfrak{C}  \bigr)
 \,+\,\epsilon\gamma_0{\bf W}\big(\hat {\mathfrak{u} } + \epsilon  |\ln \epsilon| \mathfrak{C}  \big)
\nonumber\\[2mm]
\quad&
\,+\,\epsilon^2 |\ln \epsilon| \gamma_0  \mathfrak{C}''
\,+\,{\tilde\Upsilon} \big(\hat {\mathfrak{u} } + \epsilon  |\ln \epsilon| \mathfrak{C} \big),
\label{ts10a}
\end{align}
where we have denoted
\begin{equation}\label{definitionofmathfrakC}
\mathfrak{C} (\theta)=\big(\mathfrak{C}_1 (\theta), \cdots, \mathfrak{C}_N (\theta)  \big)^T = \Big(\frac{  \tilde{\mathfrak{D} }_1(\theta)} {\rho^{\epsilon}_1(\theta) }, \cdots, \frac{   \tilde{\mathfrak{D} }_N(\theta)} {\rho^{\epsilon}_N(\theta) }\Big)^T.	
\end{equation}
It is easy to check that $\hat {\mathfrak{u} }$ satisfies the boundary conditions
\begin{equation}\label{boundarymathfraku}
\hat {\mathfrak{u} }(0)=\hat {\mathfrak{u} }(\ell),
\qquad
\hat {\mathfrak{u} }'(0)=\hat {\mathfrak{u} }'(\ell).
\end{equation}

\medskip
\noindent{\bf Step 2:}
For the solvability of \eqref{ts10a}, we pause here to consider the linear operators
\begin{equation*}{\mathbb L}^\epsilon_n v_n \, :=\,
  -    \epsilon \,  \gamma_0\,    v_n''
 \,- \,\rho^{\epsilon}_n (\theta) \, v_n,
 \quad \forall\, n=1, \cdots, N,
\end{equation*}
and then provide the resolution theory for the linear problems.

\begin{lemma}\label{lemma6point3}
We consider the following system,
 for $n=1, \cdots, N$,
\begin{equation}\label{equationofvnn}
{\mathbb L}^\epsilon_n(v_n)
\,=\, h_n ,
\qquad
v_n(0) \,=\, v_n(\ell), \quad
{v}_n'(0) \, =\, {v}_n'(\ell ).
\end{equation}
There exists a sequence $\{\epsilon_l: l\in{\mathbb N}\}$ such that
there exists a unique solution $ {\bf v} \, = \,   (v_1, \cdots, v_N)^{T}$ to the system (\ref{equationofvnn})
with estimates, for all $n \, =\, 1, \cdots, N, $
\begin{equation}
\epsilon_l\, \|  v_n ''\|_{L^2(0, \ell)}
\,+\,
\sqrt{\epsilon_l}\, \|  v_n '\|_{L^2(0, \ell)}
\,+\,
\|    {v}_n \|_{L^{\infty}(0, \ell)}
\,\leq\,
C\,\sqrt{\frac{1}{\epsilon_l}}\,  {\|h_n \|_{L^2(0, \ell)}}.
\label{l2estimatev}
\end{equation}
Moreover,  if $h_n \in H^2(0, \ell )$ then there hold
\begin{equation}
\epsilon_l\, \|  v_n''\|_{L^2(0, \ell)}
\,+\,\|  v_n'\|_{L^2(0, \ell)}
\,+\,\|{v_n}\|_{L^{\infty}(0, \ell)}
\,\leq\, C\, \|h_n\|_{H^2(0, \ell)}.
\label{h2estimate1vN}
\end{equation}
 \end{lemma}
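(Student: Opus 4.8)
The plan is to treat the system \eqref{equationofvnn} as $N$ decoupled one-dimensional periodic eigenvalue-type problems and to handle the resonance — the fact that $0$ may be arbitrarily close to the spectrum of the periodic operator $-\epsilon\gamma_0\,\partial_\theta^2-\rho_n^\epsilon(\theta)$ — by choosing $\epsilon$ along a suitable sequence $\epsilon_l\to 0$ away from the resonant set, exactly as in \cite{delPKowWei2,delPKowWeiYang,JWeiYang3}. For each fixed $n$, the first step is to analyze the periodic Sturm--Liouville operator ${\mathbb L}^\epsilon_n v = -\epsilon\gamma_0 v'' - \rho_n^\epsilon(\theta)v$ on $(0,\ell)$ with periodic boundary conditions. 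One rescales $\theta = \sqrt{\epsilon}\,\xi$ so that the operator becomes $-\gamma_0 v_{\xi\xi} - \rho_n^\epsilon v$ on an interval of length $\ell/\sqrt{\epsilon}$, a semiclassical regime; the spectrum is discrete and, by standard Weyl-type counting for periodic potentials (see the references above), the eigenvalues accumulate with spacing $O(\sqrt{\epsilon})$, so that generically (for $\epsilon$ outside a set of small measure) one has the gap estimate $\mathrm{dist}(0,\mathrm{spec}({\mathbb L}^\epsilon_n))\geq c\sqrt{\epsilon}$ uniformly in $n$. Choosing $\{\epsilon_l\}$ to be a sequence realizing this gap simultaneously for all $n=1,\dots,N$ (a finite intersection of full-density sets) gives existence and uniqueness of $v_n$ for each right-hand side $h_n\in L^2(0,\ell)$.

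Next I would derive the a priori estimates. Testing the equation ${\mathbb L}^\epsilon_n v_n = h_n$ against $v_n$ and integrating by parts over $(0,\ell)$ with the periodic boundary conditions gives
\begin{equation*}
\epsilon\gamma_0\|v_n'\|_{L^2}^2 - \int_0^\ell \rho_n^\epsilon |v_n|^2\,\mathrm{d}\theta = \int_0^\ell h_n v_n\,\mathrm{d}\theta .
\end{equation*}
Decomposing $v_n$ along the eigenbasis of ${\mathbb L}^\epsilon_n$ and using the spectral gap $|\mu_k^\epsilon|\geq c\sqrt{\epsilon}$ (where $\mu_k^\epsilon$ are the eigenvalues) yields $\|v_n\|_{L^2}\leq C\epsilon^{-1/2}\|h_n\|_{L^2}$, whence also $\|v_n\|_{L^\infty}\leq C\epsilon^{-1/2}\|h_n\|_{L^2}$ by the one-dimensional Sobolev embedding together with the equation to control $v_n'$. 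From the equation $\epsilon\gamma_0 v_n'' = -\rho_n^\epsilon v_n - h_n$ one recovers $\epsilon\|v_n''\|_{L^2}\leq C(\|v_n\|_{L^2}+\|h_n\|_{L^2})$, and interpolating gives the weighted bound $\sqrt{\epsilon}\|v_n'\|_{L^2}\leq C\epsilon^{-1/2}\|h_n\|_{L^2}$, which is precisely \eqref{l2estimatev}. For the improved estimate \eqref{h2estimate1vN} when $h_n\in H^2(0,\ell)$, I would differentiate the equation twice and use the same spectral-gap argument on $h_n''$, gaining two extra powers of $\sqrt{\epsilon}$ from the two derivatives hitting the ``good'' quantity, so that the $\epsilon^{-1/2}$ loss is upgraded to an $\epsilon$-independent constant times $\|h_n\|_{H^2}$; this is the standard trick for absorbing the resonance loss by spending regularity of the data.

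The main obstacle is the resonance: without a careful choice of $\epsilon$ the operator ${\mathbb L}^\epsilon_n$ can fail to be invertible, or be invertible with arbitrarily bad norm. The heart of the argument is therefore the spectral-gap/counting estimate showing that the set of ``bad'' $\epsilon$ has density zero, uniformly over the finitely many indices $n$, which rests on the semiclassical eigenvalue asymptotics for the periodic problem — this is where I would invoke the analysis of \cite{delPKowWei2,delPKowWeiYang,JWeiYang3} rather than reprove it. A secondary technical point is that $\rho_n^\epsilon(\theta) = \rho_n(\theta)+O(\sqrt{\epsilon})$ depends on $\epsilon$, so one must check that the $O(\sqrt{\epsilon})$ perturbation does not destroy the gap; this follows from standard perturbation theory for the periodic eigenvalues since the gap is itself of size $\sqrt{\epsilon}$ and the perturbation of the eigenvalues is controlled by $\|\rho_n^\epsilon-\rho_n\|_{L^\infty}=O(\sqrt{\epsilon})$ — one simply refines the choice of sequence $\{\epsilon_l\}$ so the gap survives with a slightly smaller constant.
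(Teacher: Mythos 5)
Your overall strategy---choosing a sequence $\{\epsilon_l\}$ along which $0$ stays a distance $\gtrsim\sqrt{\epsilon_l}$ from the periodic spectrum of each scalar operator $-\epsilon\gamma_0\partial_\theta^2-\rho_n^\epsilon(\theta)$, via a Weyl-type counting/measure argument, and then running a priori estimates---is exactly the route of Lemma~8.1 in \cite{delPKowWei2}, which is what the paper invokes without detail.

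There is, however, a genuine gap in your $L^\infty$ step. With only the gap $\operatorname{dist}(0,\operatorname{spec}({\mathbb L}^\epsilon_n))\geq c\sqrt{\epsilon}$, testing the equation against $v_n$ gives
$\epsilon\gamma_0\|v_n'\|_{L^2}^2\lesssim\|v_n\|_{L^2}^2+\|h_n\|_{L^2}\|v_n\|_{L^2}\lesssim\epsilon^{-1}\|h_n\|_{L^2}^2$,
so $\|v_n'\|_{L^2}\lesssim\epsilon^{-1}\|h_n\|_{L^2}$; the one-dimensional Gagliardo--Nirenberg inequality then yields only
$\|v_n\|_{L^\infty}\lesssim\bigl(\|v_n\|_{L^2}\,\|v_n'\|_{L^2}\bigr)^{1/2}\lesssim\epsilon^{-3/4}\|h_n\|_{L^2}$,
which is strictly weaker than the $\epsilon^{-1/2}$ claimed in \eqref{l2estimatev}. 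To get the sharp exponent you must expand $v_n=\sum_k(\hat h_k/\mu_k^\epsilon)\,e_k$ in the (uniformly bounded) periodic eigenfunctions of ${\mathbb L}^\epsilon_n$ and apply Cauchy--Schwarz,
$\|v_n\|_{L^\infty}\leq C\,\|h_n\|_{L^2}\bigl(\sum_k(\mu_k^\epsilon)^{-2}\bigr)^{1/2}$,
and then exploit not merely the single gap but the $O(\sqrt{\epsilon})$ \emph{spacing} of the eigenvalues near $0$ to conclude $\sum_k(\mu_k^\epsilon)^{-2}=O(\epsilon^{-1})$. The same weighted Cauchy--Schwarz (transferring a factor $(1+k^2)$ onto $h_n$) is what gives \eqref{h2estimate1vN}; literally differentiating the equation twice does not work as stated, because the commutator terms $2(\rho_n^\epsilon)'v_n'+(\rho_n^\epsilon)''v_n$ reintroduce the $\epsilon^{-1/2}$ and $\epsilon^{-1}$ losses from $\|v_n\|_{L^2}$ and $\|v_n'\|_{L^2}$. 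The resonance-avoidance and eigenvalue-counting argument you sketch is otherwise correct.
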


\begin{proof}
The proof is similar as the one for Lemma 8.1  in \cite{delPKowWei2}.
\end{proof}


\medskip
\noindent{\bf Step 3:}
  Giving ${\bf h}=(h_1, \cdots, h_N)^T\in L^2(0, \ell)$ with $\|{\bf h}\|_{L^2(0, \ell)}\leq \epsilon^{ 1+ \sigma}$, we consider the nonlinear problem
  \begin{equation}\label{ts8a}
  \begin{split}
   - \epsilon \gamma_0      {\tilde{\mathfrak u}}''
 \,-\, \mbox{diag}(\rho^{\epsilon}_1, \cdots, \rho^{\epsilon}_N) \,  {\tilde{\mathfrak u}}
\,=\,&{\tilde{\mathcal J}}\bigl(  {\tilde{\mathfrak u}} +  \epsilon  |\ln \epsilon| \mathfrak{C}   \bigr)
\,+\,\epsilon   \gamma_0   {\bf W}\big( {\tilde{\mathfrak u}}  +  \epsilon  |\ln \epsilon| \mathfrak{C} \big)
\\[1mm]
&\,+\,\epsilon^2 |\ln \epsilon| \gamma_0  \mathfrak{C}''
\,+\, {\bf h},
  \end{split}
  \end{equation}
\begin{equation} \label{boundaryconditionofmathfraku3}
{\tilde{\mathfrak u}}(0) \,=\, {\tilde{\mathfrak u}}(\ell), \quad \quad
{\tilde{\mathfrak u}}'(0) \, =\,  {\tilde{\mathfrak u}}'(\ell ),
\end{equation}
 where $ {\tilde{\mathfrak u}}= (   {\tilde{\mathfrak u}}_1, \cdots,   {\tilde{\mathfrak u}}_N )^T$.

\medskip
Define the set
\begin{equation*}
{\mathcal X}\,=\,
\left\{\,
{\tilde{\mathfrak u}}\in H^2(0, \ell)
\,:\,
\epsilon\, \|{\tilde{\mathfrak u}}''\|_{L^2(0, \ell)}
\,+\,
\sqrt{\epsilon}\, \|{\tilde{\mathfrak u}}'\|_{L^2(0, \ell)}
\,+\,
\|{\tilde{\mathfrak u}}\|_{L^\infty (0, \ell)}
\,\leq\,     \epsilon^{ \frac 12 + \sigma}
\,\right\}.
\end{equation*}
In fact,  for any ${\tilde{\mathfrak u}}\in {\mathcal X}$, there holds
	\begin{align*}
{\tilde{\mathcal J}}\big( {\tilde{\mathfrak u}} +  \epsilon  |\ln \epsilon| \mathfrak{C} \big)
=    {\bf P}^T
\left(\begin{array}{c} {\mathcal J}_1\big({\bf P} \big(  {\tilde{\mathfrak u}} +   \epsilon  |\ln \epsilon| \mathfrak{C}  \big)\big)
\\
\vdots
\\
{\mathcal J}_N\big({\bf P}\big(  {\tilde{\mathfrak u}} +  \epsilon  |\ln \epsilon| \mathfrak{C} \big)\big)
\end{array}
\right),
\end{align*}
where $ {\mathcal J}_n$ is defined in \eqref{6.3}.
Therefore, we have
\begin{equation*}
\|	{\tilde{\mathcal J}}\big(        {\tilde{\mathfrak u}} +  \epsilon  |\ln \epsilon| \mathfrak{C} \big)\|_{L^2(0,\ell)}
\leq C\sum_{n=1}^{N}	\|{\mathcal J}_{n}\big({\bf P}\big(        {\tilde{\mathfrak u}} +  \epsilon  |\ln \epsilon| \mathfrak{C} \big)\big)\|_{L^2(0,\ell)},
\end{equation*}
and
\begin{equation}
\begin{split}
{ { \mathcal J}}_{n}\big( {\bf P}\big(   {\tilde{\mathfrak u}} +  \epsilon  |\ln \epsilon| \mathfrak{C} \big)  \big)
\, & =\,
 O\Big(\Bigl[ \big( {\bf P} ( {\tilde{\mathfrak u}} +  \epsilon  |\ln \epsilon| \mathfrak{C} )\big)_n
- ( {\bf P}\big( {\tilde{\mathfrak u}} +  \epsilon  |\ln \epsilon|\mathfrak{C} )\big)_{n-1} \Bigr]^2
\Big)
\\[2mm]
&
 \quad \,+\,
O\Big(\Bigl[\big ( {\bf P} ( {\tilde{\mathfrak u}} +  \epsilon|\ln \epsilon|\mathfrak{C}) \big)_{n+1}
- \big( {\bf P}  ( {\tilde{\mathfrak u}}+  \epsilon  |\ln \epsilon|\mathfrak{C} ) \big)_{n}  \Bigr]^2\Big).
\end{split}
\end{equation}
Recall the  definition of ${\bf P}$ in  \eqref{P},  we have the following expression
\begin{align*}
 &\big( {\bf P}  ( {\tilde{\mathfrak u}} +  \epsilon  |\ln \epsilon|\mathfrak{C} ) \big)_{n}
\,=\,
\sum_{j=1}^{N} p_{n+1,j} \big(  {\tilde{\mathfrak u}} +  \epsilon  |\ln \epsilon| \mathfrak{C}_j \big)
\,-\,
\sum_{j=1}^{N} p_{n,j} \big(  {\tilde{\mathfrak u}} +  \epsilon  |\ln \epsilon| \mathfrak{C}_j \big)  .
\end{align*}
From the definition of $\mathfrak{C}$ as in \eqref{definitionofmathfrakC}, we obtain that
\begin{equation}\label{estimatesJn0}
\|  {\mathcal J}_n\big( {\bf P}\big(    {\tilde{\mathfrak u}} +  \epsilon  |\ln \epsilon| \mathfrak{C} \big)  \big) \|_{L^2(0,    \ell)} \leq  \epsilon^{ 1+ \sigma}.
\end{equation}
We can also get
\begin{equation}\label{estimatesJn1}
\| \epsilon   \gamma_0    {\bf W}\big(   {\tilde{\mathfrak u}} +  \epsilon  |\ln \epsilon| \mathfrak{C} \big) \|_{L^2(0,    \ell)} \leq  \epsilon^{1+ \sigma},
\end{equation}
\begin{equation}\label{estimatesJn2}
\Big\|\epsilon^2 |\ln \epsilon| \gamma_0  \mathfrak{C}''\Big\|_{L^2(0,    \ell)} \leq  \epsilon^{1+ \sigma}.
\end{equation}

\medskip
Lemma \ref{lemma6point3} implies that there exists a sequence $\{\epsilon_l: l\in{\mathbb N}\}$ such that all operators ${\mathbb L}^{\epsilon_l}_n, n=1,\cdots, N$ are invertible
with estimates in \eqref{l2estimatev}-\eqref{h2estimate1vN}.
By using \eqref{estimatesJn0}-\eqref{estimatesJn2} and applying the Contraction Mapping Principle on ${\mathcal X}$ with $\epsilon=\epsilon_l$,
we can get a solution ${\tilde{\mathfrak u}}\in {\mathcal X}$ to the problem \eqref{ts8a}-\eqref{boundaryconditionofmathfraku3}
with the following estimate
\begin{align*}
&\epsilon_l\, \| {{\tilde{\mathfrak u}} }''\|_{L^2(0, \ell)}
\,+\,\sqrt{\epsilon_l}\, \| { {\tilde{\mathfrak u}}  }'\|_{L^2(0, \ell)}
\,+\,\| {\tilde{\mathfrak u}} \|_{L^{\infty}(0, \ell)}    \nonumber
\\[2mm]
&\,\leq\,
C\,\sqrt{\frac{1}{\epsilon_l}}\, \Big[\, \|	{\tilde{\mathcal J}}\big(        {\tilde{\mathfrak u}} +  \epsilon  |\ln \epsilon| \mathfrak{C} \big) \|_{L^2(0, \ell)}
\,+\,\epsilon_l  \gamma_0  \| {\bf W} \big({\tilde{\mathfrak u}} +  \epsilon_l  |\ln \epsilon_l| \mathfrak{C} \big) \|_{L^2(0, \ell)}\Big]
\nonumber
\\[2mm]
&\quad
\,+ \,\sqrt{\frac{1}{\epsilon_l}}\Big\|\epsilon^2 |\ln \epsilon| \gamma_0  \mathfrak{C}'' \Big\|_{L^2(0,    \ell)}
\,+ \,\sqrt{\frac{1}{\epsilon_l}}\,\| \bf h\|_{L^2(0, \ell)}
\\[2mm]
& \leq \epsilon_l^{\frac 12 + \sigma}
\,+ \,\sqrt{\frac{1}{\epsilon_l}}\,\| \bf h\|_{L^2(0, \ell)} \nonumber .
\end{align*}
Here is the conclusion.

\begin{proposition}\label{lemma6point1}
For given ${\bf h}=(h_1, \cdots, h_N)^T$ with $\|{\bf h}\|_{L^2(0, \ell)}\leq \epsilon^{1+ \sigma}$ for some constant $0<\sigma<1$,
there exists a sequence $\{\epsilon_l: l\in{\mathbb N}\}$
approaching $0$ such that problem (\ref{ts8a})-(\ref{boundaryconditionofmathfraku3}) admits a
solution ${\tilde{\mathfrak u}}=({\tilde{\mathfrak u}}_1, \cdots, {\tilde{\mathfrak u}}_N)^T$ with the estimates
 \begin{equation*}
\epsilon_l\, \| { {\tilde{\mathfrak u}} }''\|_{L^2(0, \ell)}
\,+\,\sqrt{\epsilon_l}\, \| {  {\tilde{\mathfrak u}} }'\|_{L^2(0, \ell)}
\,+\,\| {\tilde{\mathfrak u}}\|_{L^{\infty}(0, \ell)}
\, \leq\,  {C}\, \epsilon_l^{-\frac{1}{2}}\,  \|{\bf h}\|_{L^2(0, \ell)}
\,+\,
\epsilon_l^{\frac 12 + \sigma}.
\end{equation*}
\qed
\end{proposition}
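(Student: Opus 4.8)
The plan is to solve the nonlinear system (\ref{ts8a})--(\ref{boundaryconditionofmathfraku3}) by a fixed point argument: first invert the diagonalized linear part using Lemma \ref{lemma6point3}, and then run the Contraction Mapping Principle on the ball $\mathcal X$. Concretely, Lemma \ref{lemma6point3} furnishes a sequence $\{\epsilon_l\}$ with $\epsilon_l\to 0$ along which each operator ${\mathbb L}^{\epsilon_l}_n$ is invertible on the space of $\ell$-periodic $H^2$ functions, with the lossy resolvent bound (\ref{l2estimatev}) and the sharper bound (\ref{h2estimate1vN}) when the data lies in $H^2$. Writing $(\mathbb L^{\epsilon_l})^{-1}=\mbox{diag}\big(({\mathbb L}^{\epsilon_l}_1)^{-1},\dots,({\mathbb L}^{\epsilon_l}_N)^{-1}\big)$, the system (\ref{ts8a}) is equivalent to the fixed point equation $\tilde{\mathfrak u}=\mathcal T(\tilde{\mathfrak u})$, where
\[
\mathcal T(\tilde{\mathfrak u}):=(\mathbb L^{\epsilon_l})^{-1}\Big[\,\tilde{\mathcal J}\big(\tilde{\mathfrak u}+\epsilon_l|\ln\epsilon_l|\mathfrak C\big)+\epsilon_l\gamma_0\,{\bf W}\big(\tilde{\mathfrak u}+\epsilon_l|\ln\epsilon_l|\mathfrak C\big)+\epsilon_l^2|\ln\epsilon_l|\gamma_0\,\mathfrak C''+{\bf h}\,\Big].
\]

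\textbf{Self-map.} For $\tilde{\mathfrak u}\in\mathcal X$, I would bound the $L^2$-norm of the bracket by $C\epsilon_l^{1+\sigma}$, using (\ref{estimatesJn0}), (\ref{estimatesJn1}), (\ref{estimatesJn2}) together with the hypothesis $\|{\bf h}\|_{L^2}\le\epsilon_l^{1+\sigma}$. Feeding this into (\ref{l2estimatev}) yields $\|\mathcal T(\tilde{\mathfrak u})\|_{\mathcal X}\le C\epsilon_l^{-1/2}\|{\bf h}\|_{L^2}+\epsilon_l^{1/2+\sigma}$, so $\mathcal T$ maps $\mathcal X$ into itself for $\epsilon_l$ small; this is simultaneously the estimate stated in the proposition. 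The contribution coming from ${\bf h}$ alone reproduces the $C\epsilon_l^{-1/2}\|{\bf h}\|_{L^2}$ term.

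\textbf{Contraction.} For $\tilde{\mathfrak u}^{(1)},\tilde{\mathfrak u}^{(2)}\in\mathcal X$ I would estimate $\mathcal T(\tilde{\mathfrak u}^{(1)})-\mathcal T(\tilde{\mathfrak u}^{(2)})$ term by term: the map $\tilde{\mathcal J}$ is quadratic in its increments (cf.\ (\ref{6.3})), so on $\mathcal X$ its Lipschitz constant carries an extra factor $O(\epsilon_l^{1/2+\sigma})$; the piece $\epsilon_l\gamma_0{\bf W}$ is linear with $O(1)$ coefficients and is controlled by its $\epsilon_l$-prefactor; and for $\tilde\Upsilon={\bf P}^T\Upsilon({\bf P}\,\cdot)$ one uses the decomposition $\mathbb M_n=\mathbb M_{n1}+\mathbb M_{n2}$ of Proposition \ref{proposition 5.1} and the Lipschitz bound (\ref{highorder2}), noting that under the orthogonal change of variables (\ref{P}) the $\mathcal X$-norm is comparable to the norm (\ref{ddotfnorm}) in which (\ref{highorder2}) is phrased. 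Combining these with (\ref{l2estimatev})--(\ref{h2estimate1vN}) gives a contraction constant which is $o(1)$ as $\epsilon_l\to0$, hence a unique fixed point $\tilde{\mathfrak u}\in\mathcal X$, which is the solution sought. This follows the scheme of \cite{delPKowWei2,delPKowWei3}.

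\textbf{Main obstacle.} The delicate point is not the algebra but making the factor $\epsilon_l^{-1/2}$ lost in (\ref{l2estimatev}) compatible with a genuine contraction: each nonlinear or perturbative contribution must be smaller than $\epsilon_l^{1/2}$, both in $L^2$-norm and in its Lipschitz constant, so that it survives the amplification by $\epsilon_l^{-1/2}$. This is exactly what the quadratic structure of $\tilde{\mathcal J}$ and the splitting $\mathbb M_n=\mathbb M_{n1}+\mathbb M_{n2}$ with (\ref{highorder1})--(\ref{highorder2}) are designed to provide, and it is why one must keep the $\epsilon$-powers in $\mathfrak C$, $\mathfrak C''$ and ${\bf W}$ sharp (see (\ref{definitionofmathfrakC})). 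The resonance enters only through the restriction to the sequence $\{\epsilon_l\}$ and is entirely absorbed into Lemma \ref{lemma6point3}.
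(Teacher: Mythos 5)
Your proposal follows the paper's argument step for step: the same ball $\mathcal X$, the same bounds (\ref{estimatesJn0})--(\ref{estimatesJn2}) on the right-hand side, the nonresonant sequence $\{\epsilon_l\}$ from Lemma~\ref{lemma6point3} with the lossy resolvent estimate (\ref{l2estimatev}), and a contraction on $\mathcal X$. One small slip of scope: in (\ref{ts8a}) the term $\mathbf h$ is a fixed given datum and $\tilde\Upsilon$ does not appear, so the decomposition $\mathbb M_n=\mathbb M_{n1}+\mathbb M_{n2}$ and the Lipschitz bound (\ref{highorder2}) are irrelevant to the proof of this proposition (they enter only in Step 4, where Proposition~\ref{lemma6point1} is applied with $\mathbf h$ built from $\tilde\Upsilon$); your map $\mathcal T$ was correctly written without $\tilde\Upsilon$, and for the contraction one needs only the Lipschitz control of $\tilde{\mathcal J}$ and $\epsilon_l\gamma_0{\bf W}$ on $\mathcal X$.
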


\medskip
\noindent{\bf Step 4:}
We go back to solve (\ref{ts10a}) and \eqref{boundarymathfraku}  and finish the proof of Theorem \ref{main1}.
By recalling   (\ref{highorder1}), (\ref{highorder2}) and Proposition \ref{proposition 5.1},  combining the assumption \eqref{ddotfnorm} of ${\tilde{\bf f}}$ (this will give the constraint of ${\hat{\mathfrak u}}$),  it is readily checked the validity of the decomposition
\begin{equation}
{\tilde\Upsilon}  \big({\hat{\mathfrak u}}+ \epsilon |\ln \epsilon | \mathfrak{C}\big)
={\hat\Upsilon}({\hat{\mathfrak u}}, {\hat{\mathfrak u}}{'}, {\hat{\mathfrak u}}{''} )
\,+\,
{\check\Upsilon}( {\hat{\mathfrak u}}, {\hat{\mathfrak u}}{'} ),
\end{equation}
with properties, for $n=1,\cdots,N$,
\begin{align}
\big\|{\hat\Upsilon}_{n}( {\hat{\mathfrak u}}, {\hat{\mathfrak u}}{'}, {\hat{\mathfrak u}}{''})\big\|_{L^2(0, \ell)}
   \leq& C\epsilon^{1+ \sigma}, \mspace{150mu}
\\[1mm]
\big\|{\hat\Upsilon}_{n}( {\hat{\mathfrak u}}^{(1)}, {\hat{\mathfrak u}}^{(1)}{ '}, {{\hat{\mathfrak u}}^{(1)}{ ''}})
-
{\hat\Upsilon}_{n}({\hat{\mathfrak u}}^{(2)}, {\hat{\mathfrak u}}^{(2)}{'}, {\hat{\mathfrak u}}^{(2)}{''})\big\|_{L^2(0, \ell)}
\leq&
C\epsilon^{ \frac 12} \, \big\|{\hat{\mathfrak u}}^{(1)}-{\hat{\mathfrak u}}^{(2)}\big\|_{H^2(0, \ell)}, \label{Lipschgraceu}
\end{align}
and
\begin{equation}
\big\|{\check\Upsilon}_{n}( {\hat{\mathfrak u}}, {\hat{\mathfrak u}}{'})\big\|_{L^2(0, \ell)}\, \leq\, C\epsilon^{1+ \sigma}.
\end{equation}
We define
$$
{\mathbb D} =  \left\{\,\, {\mathbb V}\in H^2(0, \ell)
\,:\,
\epsilon_l\, \| {\mathbb V}'' \|_{L^2(0, \ell)}
\,+\,
\sqrt{\epsilon_l}\, \| {\mathbb V}'\|_{L^2(0, \ell)}
\,+\,
\|    {\mathbb V} \|_{L^{\infty}(0, \ell)}  \le D_1 \epsilon_l^{\frac 12 +\sigma}
\,\right\}
$$
with the sequence $ \{\epsilon_l\}$   given in Lemma \ref{lemma6point3}  and a fixed constant $D_1$, and then for given ${ {\mathbb V}}\in {\mathbb D}$
set the right hand side of (\ref{ts8a}) with
\begin{equation*}
h_n({\mathbb V})=
{\hat\Upsilon}_{n}({\hat{\mathfrak u}}, {\hat{\mathfrak u}}', {\hat{\mathfrak u}}'')	
\, +\,
{\check\Upsilon}_{n}({ \mathbb V},\,\, {\mathbb V}'),
\quad
n=1,\cdots,N.
\end{equation*}
Whence,  by the facts in \eqref{Lipschgraceu}, the theory developed in Proposition \ref{lemma6point1} and the Contraction Mapping Theorem,
we find ${\mathbb V}$ for a fixed ${\tilde{\mathbb V}}$ in $\mathbb{D}$.
This will give a mapping as
$$
{\mathcal Z}({\tilde{\mathbb V}})={\mathbb V},
$$
and the solution of our problem is simply a fixed point of ${\mathcal Z}$.
Continuity of ${\check\Upsilon}_1$, $\cdots$, ${\check\Upsilon}_N$ with
respect to their parameters and a standard regularity arguments allows us to conclude
that ${\mathcal Z}$ is compact as mapping from $H^2(0, \ell)$ into itself.
The Schauder Theorem applies to yield the existence of a fixed point of
${\mathcal Z}$ as required. This ends the proof of Theorem \ref{main1}.
\qed

\bigskip
\noindent
{\bf Acknowledgements: }
J. Yang is supported by  NSFC(No. 11771167 \& No. 11831009).

\begin{appendices}
\section{The computations of \eqref{b1+b3}}\label{appendixA}

The main objective in this section is to compute the quantities in \eqref{b1+b3}. For the case $n=3,\cdots,N $, from the expression of ${\mathbf b}_{1n}, {\mathbf b}_{2n} $ as in (\ref{r5})-(\ref{r6}), we can obtain that
\begin{align}
 {\mathbf b}_{1n}&\,=\,- 2  e^{- \sqrt 2 x_n} e^{- \sqrt 2 \beta (f_n-f_{n-1})} +O(e^{- 2\sqrt 2 |x_n+\beta(f_n-f_{n-1})|})
 \nonumber
\\[1mm]
&\,=\,-2\epsilon  (N-n+1) e^{- \sqrt 2 x_n} e^{-  \sqrt 2 \beta (\mathfrak{f}_{n}-\mathfrak{f}_{n-1}) }
+O(e^{- 2\sqrt 2  |x_n+\beta(f_n-f_{n-1})|}),
\label{b1}
\\[2mm]
{\mathbf b}_{2n}&\,=\,- 2  e^{- \sqrt 2 x_n} e^{- \sqrt 2 \beta(f_n-f_{n-2})} +O(e^{- 2\sqrt 2  |x_n+\beta(f_n-f_{n-2})|})
\nonumber
\\[1mm]
&\,=\,-2 \epsilon^2 (N-n+1)(N-n+2) e^{- \sqrt 2 x_n} e^{-  \sqrt 2 \beta  (\mathfrak{f}_{n}-\mathfrak{f}_{n-2})}
+O(e^{- 2\sqrt 2  |x_n+\beta(f_n-f_{n-2})|}).
\end{align}
Specially, when $ n=1, 2$, we obtain
\begin{align*}
 {\mathbf b}_{11}
\,& = \,  H\big(\beta(s+ f_{1} ) \big)  -1 \nonumber
\\[1mm]
&\,=\,- 2  e^{- \sqrt 2 \,x_1} e^{-2 \sqrt 2 \,\beta \, f_1  } +O(e^{- 2\sqrt 2  \,|x_1+2 \,\beta\, f_1 |})  \nonumber
\\[1mm]
&\,=\,-2\epsilon  N \, e^{- \sqrt 2 \,x_1} e^{\,- 2 \sqrt 2 \,\beta  \, \mathfrak{f}_{1} \,} +O(e^{- 2\sqrt 2  \,|x_1+2\,\beta\, f_1  |}),
\\[2mm]
{\mathbf b}_{21}  &\,=\, H\big(\beta(s+ f_{2} ) \big)  -1     \nonumber
\\[1mm]
&\,=\,
- 2  e^{- \sqrt 2 \,x_1} e^{- \sqrt 2 \,\beta \,  (f_{2}+ f_{1})  } +O(e^{- 2\sqrt 2  \,|x_1+  \,\beta\,  (f_{2}+ f_{1})  |})\nonumber
\\[1mm]
&\,=  - 2 \epsilon^2N(N-1)     e^{- \sqrt 2 \,x_1}   e^{- \sqrt 2 \,\beta \,  ( \mathfrak {f}_{2}+  \mathfrak {f}_{1})  } +O(e^{- 2\sqrt 2  \,|x_1+  \,\beta\,  (f_{2}+ f_{1})  |}),
\end{align*}
and
\begin{align*}
{\mathbf b}_{12}
\,& = \,  H\big(\beta(s-   f_{1} ) \big)  -1 \nonumber
\\[1mm]
&\,=\,- 2  e^{- \sqrt 2 \,x_2} e^{-  \sqrt 2 \,\beta \,  (f_{2}-f_{1})  } +O(e^{- 2\sqrt 2  \,|x_2+  \,\beta \,(f_{2}-f_{1}) |})  \nonumber
\\[1mm]
&\,=\,-2  ( N -1) \epsilon \, e^{- \sqrt 2 \,x_2} e^{-  \sqrt 2 \,\beta \,  (\mathfrak {f}_{2}-\mathfrak {f}_{1})  }     +O(e^{- 2\sqrt 2  \,|x_2+  \,\beta \,(f_{2}-f_{1}) |}),
\\[2mm]
{\mathbf b}_{22}  &\,=\, H\big(\beta(s+ f_{1} ) \big)  -1     \nonumber
\\[1mm]
&\,=\,
- 2  e^{- \sqrt 2 \,x_2} e^{- \sqrt 2 \,\beta \,  (f_{2}+f_{1})  } +O(e^{- 2\sqrt 2  \,|x_2+  \,\beta\,  (f_{2}+f_{1})  |})\nonumber
\\[1mm]
&\,=  - 2 \epsilon^2  N(N-1)^2   e^{- \sqrt 2 \,x_2}   e^{-  \sqrt 2 \,\beta \,  (f_{2}+f_{1})  } +O(e^{- 2\sqrt 2  \,|x_2+ 2  \,\beta\,  f_{2}   |}).
 \end{align*}

Similarly,   for the case $ n =1, \cdots, N-2$,  we can also obtain
 \begin{align}
{\mathbf b}_{3n}&
\,=\,2  e^{ \sqrt 2 x_n} e^{- \sqrt 2 \beta (f_{n+1}-f_{n})} +O(e^{- 2\sqrt 2 |x_n+\beta( f_{n}-f_{n+1})|})
\nonumber
\\[1mm]
&\,=\, 2\epsilon  (N-n)  e^{ \sqrt 2  x_n} e^{-  \sqrt 2 \beta (\mathfrak{f}_{n+1}-\mathfrak{f}_n)}+O(e^{- 2\sqrt 2  |x_n+\beta(f_n-f_{n+ 1})|}),
\label{b3}
\\[2mm]
{\mathbf b}_{4n}&\,=\,2  e^{ \sqrt 2 x_n} e^{- \sqrt 2 \beta (f_{n+2}-f_{n})} +O(e^{- 2\sqrt 2 |x_n+\beta (f_{n}-f_{n+2})|})
\nonumber
 \\[1mm]
&\,=\, 2 \epsilon^2(N-n)(N-n-1) e^{ \sqrt 2  x_n} e^{-  \sqrt 2 \beta (\mathfrak{f}_{n+2}-\mathfrak{f}_{n}) }
+O \big( e^{- 2\sqrt 2 |x_n+\beta(f_n-f_{n+2})|} \big).
\label{b4}
\end{align}
And  specially, when $ n=N-1, N$,  recalling the notation  $f_{N+1} = + \infty$,  we have
\begin{align*}
{\mathbf b}_{3\,  N-1}
\, &=\,   H\big( \beta(s-f_N)  \big)  +1 \nonumber
\\[1mm]
& \, =  2  e^{- \sqrt 2 \,x_{N-1}} e^{- \sqrt 2 \,\beta \,(f_{N}-f_{N-1})} +O(e^{- 2\sqrt 2  \,|x_{N-1}+\,\beta(\,f_{N-1} -f_{N-3}\,)|}) \nonumber
\\[1mm]   \, &= \,   2  \epsilon\, e^{ \sqrt 2 \,x_{N-1}} e^{-  \sqrt 2 \,\beta  \,(\mathfrak{f}_{N}-\mathfrak{f}_{N-1})} +O(e^{- 2\sqrt 2  \,|x_{N}+\,\beta(\,f_{N}-f_{N-1}\,)|}),
\\[2mm]
{\mathbf b}_{4\,  N-1} \, &=\,  0,
\end{align*}
and
\begin{equation*}
{\mathbf b}_{3\,  N} = {\mathbf b}_{4\,  N} = 0.
\end{equation*}

\medskip
By combining the above formulas, we obtain the following:

\medskip
\noindent {\bf{Case 1:}}
When $n=3,\cdots,N-2$, \eqref{b1+b3} holds.

\medskip
\noindent {\bf{Case 2:}} When $n=1, 2$,  we obtain that
\begin{align*}
 &{\mathbf b}_{11}\,-\,{\mathbf b}_{21}\,+\,{\mathbf b}_{31}\,-\,{\mathbf b}_{41}
 \\[1mm]
&\,=\, -2\epsilon N  \, e^{- \sqrt 2 \,x_1} e^{\,- 2 \sqrt 2 \,\beta  \, \mathfrak{f}_{1} \,} +O(e^{- 2\sqrt 2  \,|x_1+2\,\beta\, f_1  |})
\\[1mm]
&\,\quad +\, 2\epsilon   (N-1) \,  e^{ \sqrt 2  \,x_1} e^{-  \sqrt 2 \,\beta  \,(\mathfrak{f}_{2}-\mathfrak{f}_1)}+O(e^{- 2\sqrt 2  \,|x_1+\,\beta\,(f_1-f_{2}\,)|})
\\[1mm]
&\,\quad +   2\epsilon^2  N(N-1)    e^{- \sqrt 2 \,x_1}   e^{- \sqrt 2 \,\beta \,  ( \mathfrak {f}_{2}+  \mathfrak {f}_{1})  } +O(e^{- 2\sqrt 2  \,|x_1+  \,\beta\,  (f_{2}+ f_{1})  |})
\\[1mm]
&\,\quad\,- 2 \epsilon^2 (N-2)(N-1)\,  e^{ \sqrt 2  \,x_1} e^{-  \sqrt 2 \,\beta  \,(\mathfrak{f}_{3} -\mathfrak{f}_{1})} +O(e^{- 2\sqrt 2  \,|x_1+\,\beta\,(f_1-f_{3}\,)|}),
\end{align*}
 and
  \begin{equation*}
 \begin{split}
 &{\mathbf b}_{12}\,-\,{\mathbf b}_{22}\,+\,{\mathbf b}_{32}\,-\,{\mathbf b}_{42}
 \\[1mm]
&\,=\, -2  \epsilon ( N -1) \, e^{- \sqrt 2 \,x_2} e^{-  \sqrt 2 \,\beta \,  (\mathfrak {f}_{2}-\mathfrak {f}_{1})  }     +O(e^{- 2\sqrt 2  \,|x_2+  \,\beta \,(f_{2}-f_{1}) |})  \\[1mm]
&\,\quad +\, 2 \epsilon  (N-2) \,  e^{ \sqrt 2  \,x_2} e^{-  \sqrt 2 \,\beta  \,(\mathfrak{f}_{3}-\mathfrak{f}_2)}+O(e^{- 2\sqrt 2  \,|x_2+\,\beta\,(f_2-f_{3}\,)|})
\\[1mm]
&\,\quad +    2 \epsilon^2N(N-1)^2     e^{- \sqrt 2 \,x_2}   e^{- \sqrt 2 \,\beta \,  (\mathfrak {f}_{2} +\mathfrak {f}_{1} )   } +O(e^{- 2\sqrt 2  \,|x_2+ 2  \,\beta\,  f_{2} |})
\\[1mm]
&\,\quad\,- 2 \epsilon^2(N-2)(N-3) \,  e^{ \sqrt 2  \,x_2} e^{-  \sqrt 2 \,\beta  \,(\mathfrak{f}_{4} -\mathfrak{f}_{2})} +O(e^{- 2\sqrt 2  \,|x_2+\,\beta\,(f_2-f_{4}\,)|}).
\end{split}
 \end{equation*}

\medskip
\noindent {\bf{Case 3:}}
When $n=N-1, N$, we get that
 \begin{equation*}
 \begin{split}
 &{\mathbf b}_{1 N-1}\,-\,{\mathbf b}_{2 N-1}\,+\,{\mathbf b}_{3 N-1}\,-\,{\mathbf b}_{4N-1}
  \\[1mm]
&\,=\,-4\epsilon   e^{- \sqrt 2 x_{N-1} } e^{-  \sqrt 2 \beta (\mathfrak{f}_{N-1}-\mathfrak{f}_{N-2}) }
+O(e^{- 2\sqrt 2  |x_{N-1} +\beta(f_{N-1} -f_{N-2})|}) \\[1mm]
  &\,\quad +
 2  \epsilon\, e^{ \sqrt 2 \,x_{N-1}} e^{-  \sqrt 2 \,\beta  \,(\mathfrak{f}_{N}-\mathfrak{f}_{N-1})} +O(e^{- 2\sqrt 2  \,|x_{N}+\,\beta(\,f_{N}-f_{N-1}\,)|})\\[1mm]
   &\,\quad +  12  \epsilon^2 e^{- \sqrt 2 x_{N-1}} e^{-  \sqrt 2 \beta  (\mathfrak{f}_{N-1}-\mathfrak{f}_{N-3})}
+O(e^{- 2\sqrt 2  |x_{N-1}+\beta(f_{N-1}-f_{N-3})|}),
   \end{split}
 \end{equation*}
 and
 \begin{equation*}
 \begin{split}
 &{\mathbf b}_{1N}\,-\,{\mathbf b}_{2N}\,+\,{\mathbf b}_{3N}\,-\,{\mathbf b}_{4N}
  \\[1mm]
&\,=\,-2 \epsilon \, e^{- \sqrt 2 \,x_N} e^{-  \sqrt 2 \,\beta  \,(\mathfrak{f}_{N}-\mathfrak{f}_{N-1})} +O(e^{- 2\sqrt 2  \,|x_N+\,\beta(\,f_N-f_{N-1}\,)|}) \\[1mm]
  &\,\quad  +  \, 4 \epsilon^2\, e^{- \sqrt 2 \,x_N} e^{-  \sqrt 2 \,\beta  \,(\mathfrak{f}_{N}-\mathfrak{f}_{N-2})}+O(e^{- 2\sqrt 2  \,|x_N+\,\beta(\,f_N-f_{N-2}\,)|}).
   \end{split}
 \end{equation*}

\medskip
\section{Linear problem}\label{appendixB}

We first set
\begin{equation*}
{\mathcal S}=\R\times(0, \ell/\epsilon),
\end{equation*}
and provide the following Lemma in \cite{delPKowWei2}.
\begin{lemma}(Lemma 4.2 in \cite{delPKowWei2})\label{lemma of solve linearized pro}
For a given function $\Phi_*(x, z)\in L^2({\mathcal S})$ with
$$
\int_{\mathbb R} \Phi_*(x,z) H_x \, {\mathrm{d}}x\,=\, 0,\quad   0<z<\frac{\ell}{\epsilon},
$$
let us consider the following problem
\begin{equation}\label{definition6}
\phi_{*,xx} \, +\, \phi_{*,zz} \, +\, (1-3H^2)\, \phi_{*}\,=\, \Phi_*
\quad
\mbox{in } {\mathcal S},
  \end{equation}
with the  conditions
\begin{equation}
\phi_{*}(x, 0)\,=\, \phi_{*}(x, \ell /{\epsilon}),  \quad\phi_{*, z}(x, 0)\,=\, \phi_{*, z}(x, \ell/{\epsilon}),  \quad x\in \mathbb R,
\end{equation}
\begin{equation}\label{definition7}
\int_\mathbb{R}{\phi_{*}(x,z) H_x}\,{\mathrm{d}}x =0,\quad   0<z<\frac{\ell}{\epsilon}.
\end{equation}
The problem \eqref{definition6}-\eqref{definition7} has  a unique solution $\phi_{*}
   \in H^2({\mathcal S})$.
\qed
\end{lemma}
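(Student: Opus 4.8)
\medskip
\noindent\textbf{Proof proposal (plan).}
The plan is to use the periodicity in $z$ to diagonalize \eqref{definition6}--\eqref{definition7} into a family of ordinary differential equations on $\mathbb R$ indexed by the Fourier frequencies, and to solve each of them by means of the spectral properties of the linearized Allen--Cahn operator $\mathcal L_0 := \partial_{xx}+(1-3H^2)$ acting on $L^2(\mathbb R)$. First I would record the one--dimensional facts that drive the argument: differentiating \eqref{blocksolution} gives $\mathcal L_0 H_x=0$, and since $H_x>0$ on $\mathbb R$ and $H_x\in L^2(\mathbb R)$, the function $H_x$ is the nodeless ground state of $-\mathcal L_0$, so $0$ is a simple isolated eigenvalue at the bottom of the spectrum and there is a fixed number $\mu_1>0$ with
\[
\int_{\mathbb R}\bigl[\phi_x^2-(1-3H^2)\phi^2\bigr]\,{\mathrm d}x \ \ge\ \mu_1\int_{\mathbb R}\phi^2\,{\mathrm d}x
\qquad\text{whenever } \int_{\mathbb R}\phi\,H_x\,{\mathrm d}x=0.
\]
Since $1-3H^2$ is bounded, a routine interpolation of this inequality with $\int_{\mathbb R}\phi_x^2\ge 0$ upgrades it to full coercivity of $-\mathcal L_0$ on $\{H_x\}^\perp$ in the $H^1(\mathbb R)$--norm; consequently, for $\lambda=0$ the operator $-\mathcal L_0$ is boundedly invertible on $\{H_x\}^\perp\cap L^2(\mathbb R)$, while for $\lambda>0$ the operator $-\mathcal L_0+\lambda$ (which is then $\ge\lambda>0$) is boundedly invertible on all of $L^2(\mathbb R)$, with operator norm of the inverse controlled by $C/(\mu_1+\lambda)$.

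Next I would expand $\Phi_*(x,z)=\sum_{m\in\mathbb Z}\Phi_m(x)\,e^{2\pi i m\epsilon z/\ell}$ and seek $\phi_*(x,z)=\sum_{m\in\mathbb Z}\phi_m(x)\,e^{2\pi i m\epsilon z/\ell}$; the hypothesis $\int_{\mathbb R}\Phi_* H_x\,{\mathrm d}x=0$ says $\int_{\mathbb R}\Phi_m H_x\,{\mathrm d}x=0$ for every $m$. Then \eqref{definition6} is equivalent to the decoupled system
\[
-\mathcal L_0\phi_m+\Bigl(\tfrac{2\pi m\epsilon}{\ell}\Bigr)^{2}\phi_m=-\,\Phi_m \quad\text{in }\mathbb R,\qquad \int_{\mathbb R}\phi_m H_x\,{\mathrm d}x=0.
\]
For $m=0$ this is uniquely solvable in $\{H_x\}^\perp\cap H^2(\mathbb R)$ by the coercivity above together with elliptic regularity on the line; for $m\ne 0$ it is uniquely solvable in $H^2(\mathbb R)$, and the constraint $\int\phi_m H_x=0$ is then automatic, since testing the equation against $H_x$ and integrating by parts (the boundary terms vanish by the exponential decay of $H_x$ recorded in \eqref{asymptoticofH}) yields $\bigl(\tfrac{2\pi m\epsilon}{\ell}\bigr)^{2}\int\phi_m H_x=0$. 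In every case one obtains $\|\phi_m''\|_{L^2(\mathbb R)}+(1+m^2)\|\phi_m\|_{L^2(\mathbb R)}\le C\|\Phi_m\|_{L^2(\mathbb R)}$ with $C$ independent of $m$; summing over $m$ via Parseval produces $\phi_*\in H^2(\mathcal S)$ satisfying \eqref{definition6}--\eqref{definition7}, the periodicity conditions, and $\|\phi_*\|_{H^2(\mathcal S)}\le C\|\Phi_*\|_{L^2(\mathcal S)}$. Uniqueness is immediate: $\Phi_*=0$ forces every $\phi_m$ to solve the homogeneous constrained problem, hence $\phi_m\equiv 0$.

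The hard part will be the spectral input, namely establishing that $-\mathcal L_0$ is $H^1$--coercive on $\{H_x\}^\perp$ with a genuine gap $\mu_1>0$ (a Sturm--Liouville fact, since $H_x$ is nodeless) and then making the resolvent estimates uniform in the frequency $m$ so that the Fourier series can be re-summed; once this is secured, the passage to the cylinder $\mathcal S$ and the $H^2$ regularity are standard. An equivalent route, which avoids the Fourier decomposition, is to apply Lax--Milgram directly in the closed subspace $\{\phi\in H^1(\mathcal S): \phi \text{ is }\ell/\epsilon\text{-periodic in }z,\ \int_{\mathbb R}\phi(\cdot,z)H_x\,{\mathrm d}x=0\}$, whose coercivity is exactly the $z$--integrated form of the one--dimensional inequality, the Lagrange multiplier being forced to vanish by $\int_{\mathbb R}\Phi_* H_x\,{\mathrm d}x=0$; we follow the first route, which is the one carried out in \cite{delPKowWei2}.
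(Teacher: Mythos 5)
The paper itself supplies no proof of this lemma: it is imported verbatim as Lemma 4.2 of del Pino--Kowalczyk--Wei \cite{delPKowWei2} (note the \qed placed right after the statement), so there is no in-text argument to compare against. Your reconstruction---Fourier expansion in $z$, reduction to the family of one-dimensional problems $-(\partial_{xx}+(1-3H^2))\phi_m+\lambda_m\phi_m=-\Phi_m$ with $\lambda_m=(2\pi m\epsilon/\ell)^2$, invertibility via the spectral gap of the linearized operator on the orthogonal complement of the nodeless ground state $H_x$, automatic orthogonality of $\phi_m$ for $m\neq 0$ obtained by pairing with $H_x$, and Parseval re-summation---is precisely the route of the cited reference, and the plan is sound. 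Two small bookkeeping corrections: the resolvent bound $\leq C/(\mu_1+\lambda)$ holds on the invariant subspace $\{H_x\}^{\perp}$, not on all of $L^2(\mathbb R)$ (where the inverse has norm $1/\lambda$, which diverges as $\lambda\downarrow 0$); this suffices because every $\Phi_m$ lies in $\{H_x\}^{\perp}$. Also, the mode-wise estimate you re-sum should carry the actual frequency $\lambda_m$ rather than $m^2$, i.e.\ $\|\phi_m''\|_{L^2}+\sqrt{\lambda_m}\,\|\phi_m'\|_{L^2}+(1+\lambda_m)\|\phi_m\|_{L^2}\leq C\|\Phi_m\|_{L^2}$ with $C$ uniform in $m$ \emph{and} $\epsilon$, obtained by testing the mode equation against $\phi_m$ and against $-\phi_m''$; replacing $m^2$ by $\lambda_m$ is what makes the constant in $\|\phi_*\|_{H^2(\mathcal S)}\leq C\|\Phi_*\|_{L^2(\mathcal S)}$ independent of $\epsilon$.
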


\medskip
Then, by using the above Lemma, we can obtain following result:
\begin{lemma}\label{lemma of solve linearized pro11}
For a given function $\Phi^*(x, z)\in L^2({\mathcal S})$ with
$$
\int_{\mathbb R} \Phi^*(x,z) H_x \, {\mathrm{d}}x\,=\, 0,\quad   0<z<\frac{\ell}{\epsilon},
$$
consider the following problem
\begin{equation}\label{definition3}
   \phi^*_{zz}+ \beta^{2}\,\Big[\phi^{*}_{xx}\, +\, (1-3H^2)\phi^{*}\Big]=\, \Phi^*
   \quad
\mbox{ in } {\mathcal S},
\end{equation}
with the  conditions
\begin{equation}\label{definition4}
\phi^{*}(x, 0)\,=\, \phi^{*}(x, \ell /{\epsilon}),  \quad\phi^{*}_z (x, 0)\,=\, \phi^*_{ z}(x, \ell/{\epsilon}), \quad x\in \mathbb R,
\end{equation}
\begin{equation}\label{definition5}
\int_\mathbb{R}{\phi^{*}(x,z) H_x}\,{\mathrm{d}}x =0,\quad   0<z<\frac{\ell}{\epsilon}.
\end{equation}
There exists
 a unique solution $\phi^{*}
   \in H^2({\mathcal S})$ to problem \eqref{definition3}-\eqref{definition5}, which satisfies
   \begin{equation}\label{linftyestimate}
   \|\phi_{*}\|_{H^2( {\mathcal S})}  \leq C \|\Phi^* \|_{L^2({\mathcal S})}.
   \end{equation}
\end{lemma}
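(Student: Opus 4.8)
The plan is to deduce Lemma \ref{lemma of solve linearized pro11} from the constant‑coefficient result Lemma \ref{lemma of solve linearized pro} by straightening the $z$‑variable, and then to absorb the lower‑order term that this change of variables generates by a contraction argument exploiting the smallness of $\epsilon$. Recall that $\beta=\beta(\epsilon z)=V^{1/2}(0,\epsilon z)$ is smooth, $\ell/\epsilon$‑periodic, uniformly bounded together with all its derivatives, and bounded away from $0$. First I would introduce
\[
\zeta(z)=\int_0^z\beta(\epsilon \sigma)\,{\mathrm d}\sigma,\qquad
L_*:=\zeta\!\left(\tfrac\ell\epsilon\right)=\tfrac1\epsilon\int_0^\ell\beta(\theta)\,{\mathrm d}\theta,
\]
so that $(x,z)\mapsto(x,\zeta(z))$ is a $C^\infty$‑diffeomorphism of $\mathcal S$ onto $\mathcal S_*:=\mathbb R\times(0,L_*)$ with Jacobian bounded above and below and all derivatives up to order two bounded; in particular it induces an equivalence of the $H^2$‑norms on $\mathcal S$ and $\mathcal S_*$. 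Since $\partial_z=\beta\,\partial_\zeta$ and $\partial_z^2=\beta^2\partial_\zeta^2+\epsilon\beta'\,\partial_\zeta$, dividing \eqref{definition3} by $\beta^2$ turns the problem into
\[
\phi^{*}_{\zeta\zeta}+\phi^{*}_{xx}+(1-3H^2)\phi^{*}
=\beta^{-2}\Phi^{*}-\epsilon\,\beta^{-2}\beta'\,\phi^{*}_{\zeta}\qquad\text{in }\mathcal S_*,
\]
with the periodicity conditions \eqref{definition4} carried over to period $L_*$ (note that $\beta(0)=\beta(\ell)$, so the matching of the first derivatives transforms cleanly) and the orthogonality \eqref{definition5} unchanged, as it does not involve $z$.

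Next I would solve the transformed problem by a fixed‑point scheme. Let $\mathcal Y$ be the closed subspace of $H^2(\mathcal S_*)$ consisting of the $L_*$‑periodic‑in‑$\zeta$ functions $\phi$ (together with $\phi_\zeta$) satisfying $\int_{\mathbb R}\phi(x,\zeta)H_x\,{\mathrm d}x=0$ for a.e.\ $\zeta$, and for $\phi\in\mathcal Y$ let $T\phi$ be the unique solution, furnished by Lemma \ref{lemma of solve linearized pro} (whose proof does not use the actual value of the period and so applies verbatim on $\mathcal S_*$), of
\[
\psi_{\zeta\zeta}+\psi_{xx}+(1-3H^2)\psi=\beta^{-2}\Phi^{*}-\epsilon\,\beta^{-2}\beta'\,\phi_\zeta .
\]
The right‑hand side lies in $L^2(\mathcal S_*)$ and, for each $\zeta$, has vanishing $H_x$‑moment: $\int_{\mathbb R}\beta^{-2}\Phi^{*}H_x\,{\mathrm d}x=\beta^{-2}\int_{\mathbb R}\Phi^{*}H_x\,{\mathrm d}x=0$ by hypothesis, while $\int_{\mathbb R}\phi_\zeta H_x\,{\mathrm d}x=\partial_\zeta\!\int_{\mathbb R}\phi H_x\,{\mathrm d}x=0$ because $\phi\in\mathcal Y$; hence Lemma \ref{lemma of solve linearized pro} applies and $T\phi\in\mathcal Y$. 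Combining Lemma \ref{lemma of solve linearized pro} with the open mapping theorem yields a bounded solution operator, i.e.\ $\|\psi\|_{H^2}\le C\,\|\beta^{-2}\Phi^{*}-\epsilon\beta^{-2}\beta'\phi_\zeta\|_{L^2}$; consequently, since $\beta^{-2}\beta'$ is bounded,
\[
\|T\phi_1-T\phi_2\|_{H^2(\mathcal S_*)}\le C\,\big\|\epsilon\,\beta^{-2}\beta'(\phi_{1,\zeta}-\phi_{2,\zeta})\big\|_{L^2}\le C\,\epsilon\,\|\phi_1-\phi_2\|_{H^2(\mathcal S_*)} .
\]
For $\epsilon$ small this is a contraction on $\mathcal Y$, so there is a unique fixed point $\tilde\phi$, which is the unique solution of the transformed problem and satisfies $\|\tilde\phi\|_{H^2(\mathcal S_*)}\le C(1-C\epsilon)^{-1}\|\beta^{-2}\Phi^{*}\|_{L^2}\le C'\|\Phi^{*}\|_{L^2}$. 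Pulling $\tilde\phi$ back through the diffeomorphism produces the desired $\phi^{*}\in H^2(\mathcal S)$ and the estimate \eqref{linftyestimate}; uniqueness for the original problem follows by running the same contraction with $\Phi^{*}=0$.

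Finally, I do not expect serious difficulty here, but the one point that needs care is the extra first‑order term $\epsilon\,\beta^{-2}\beta'\,\phi^{*}_\zeta$ produced by the (unavoidably $z$‑dependent) straightening: it has no sign and cannot be viewed as a small perturbation of the operator $\partial_\zeta^2+\partial_x^2+(1-3H^2)$ in the naive sense, but it carries an explicit factor $\epsilon$, which is exactly what closes the iteration above. The two routine verifications — that the orthogonality constraint \eqref{definition5} is propagated by $T$ (immediate, since one differentiates in $\zeta$, not in $x$), and that the $z$‑periodicity survives the change of variables (immediate, since $\zeta$ is strictly increasing with $\zeta(0)=0$ and $\beta(0)=\beta(\ell)$) — complete the argument; everything else is the linear elliptic theory already packaged into Lemma \ref{lemma of solve linearized pro}.
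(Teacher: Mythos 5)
Your proof follows essentially the same route as the paper: both perform the change of variables $\zeta(z)=\int_0^z\beta(\epsilon\sigma)\,\mathrm d\sigma$ (the paper's $\iota(z)=\epsilon^{-1}\int_0^{\epsilon z}\beta(r)\,\mathrm dr$ is the same map), transform to a constant-coefficient problem on a new strip, and invoke Lemma \ref{lemma of solve linearized pro}. In fact you are more careful than the paper at the one nontrivial point: the transformed equation carries the first-order term $-\epsilon\beta^{-2}\beta'\tilde\phi^{*}_{\zeta}$ on the right, so Lemma \ref{lemma of solve linearized pro} does not apply directly, and your explicit contraction argument (closing precisely because of the factor $\epsilon$) is needed but is only tacitly assumed in the paper's ``From Lemma \ref{lemma of solve linearized pro} \ldots''; the only thing worth flagging in your write-up is that the solution-operator bound from Lemma \ref{lemma of solve linearized pro} must be uniform in the strip length (hence in $\epsilon$) for the contraction constant $C\epsilon$ to be small, which is true in the reference but deserves a remark rather than a bare appeal to the open mapping theorem.
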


\begin{proof}
Let
\begin{equation*}
\phi^{*}(x,z)= \tilde \phi^{*} (x, \iota(z)), \quad \iota(z) = \epsilon^{-1} \int_0^{\epsilon z} \beta(r) {\mathrm{d}}r.
\end{equation*}
Here, the map
$$
\iota: \Big[ 0, \frac{\ell}{\epsilon}\Big)\rightarrow \Big[ 0, \frac{\hat \ell}{\epsilon}\Big),
\quad
{\tilde z} = \iota(z)
$$
 is a diffeomorphism, where $ \hat \ell = \int_0^\ell \beta(r) {\mathrm{d}}r. $

\medskip
 It is easy to derive that
 $$
 \phi^{*}_{z} (x,z) \,=\, \beta\,\tilde \phi^{*}_{{\tilde z}}(x,{\tilde z}),
 \qquad
 \phi^{*}_{zz} (x,z)\,=\, \beta^2\, \tilde  \phi^{*}_{{\tilde z}{\tilde z}}(x,{\tilde z}) \,+\, \epsilon\, \beta'\,\tilde\phi^{*}_{{\tilde z}}(x,{\tilde z}),
 $$
while differentiation in $x $ does not change.
Therefore, problem \eqref{definition3}-\eqref{definition5} can be rewritten as
 \begin{align}\label{definition8}
    \tilde  \phi^{*}_{{\tilde z}{\tilde z}}
    \,+\,
    \Big[ \tilde \phi^{*}_{xx}\, +\, (1-3H^2) \tilde \phi^{*}\Big]
    &=\,  \tilde{ \Phi}^*\,-\, \epsilon\,\beta^{-2}\, \beta'\,\tilde \phi^{*}_{{\tilde z}}\quad \mbox{ in }  \mathbb R\times \Big[0,\frac{\hat \ell}{\epsilon}\Big),
 \\
     \int_{\mathbb R}  \tilde\Phi^*(x, {\tilde z}) H_x \, {\mathrm{d}}x\,=\,& 0,\quad   0<{\tilde z}< \frac{\hat \ell}{\epsilon},
  \end{align}
with the  conditions
\begin{equation}\label{definition9}
 \tilde\phi^{*}(x, 0)\,=\, \phi^{*}(x, \hat \ell /{\epsilon}),  \quad \tilde\phi^{*}_{{\tilde z}} (x, 0)\,=\,  \tilde \phi^*_{ {\tilde z} }(x, \hat \ell/{\epsilon}), \quad x\in \mathbb R,
\end{equation}
\begin{equation}\label{definition10}
\int_\mathbb{R} {\tilde \phi}^*(x,{\tilde z}) H_x\,{\mathrm{d}}x=0,\quad   0<{\tilde z}<\frac{\hat \ell}{\epsilon}.
\end{equation}
From Lemma \ref{lemma of solve linearized pro},  we can know that \eqref{definition8}-\eqref{definition10}
have a unique solution $\tilde \phi^{*}(x, {\tilde z}) $.
The result follows by transforming  $\tilde \phi^{*}(x, \iota(z) )$ into $ \phi^{*}(x, z)$ via change of variables.
By using the  method of sub-supersolutions, we can get the estimate \eqref{linftyestimate}.
This concludes the proof of the lemma.
\end{proof}

%
%
%
%
%
%
%

\end{appendices}

 \end{document}